\newtheorem{theorem}{Theorem}
\newtheorem{lemma}[theorem]{Lemma}
\newtheorem{proposition}[theorem]{Proposition}
\newtheorem{corollary}[theorem]{Corollary}
\newtheorem{property}{Partition Property}
\theoremstyle{definition}
\newtheorem{definition}[theorem]{Definition}
\newtheorem{remark}[theorem]{Remark}
\numberwithin{theorem}{section}
\numberwithin{equation}{section}
\numberwithin{figure}{section}
\DeclareMathOperator{\Diff}{Diff}
\DeclareMathOperator{\sgn}{sgn}
\DeclareMathOperator{\Tr}{Tr}
\DeclareMathOperator{\tr}{tr}
\newcommand{\abs}[1]{{\left\lvert{#1}\right\rvert}}
\newcommand{\smallang}[1]{{\langle{#1}\rangle}}
\newcommand{\F}{\mathcal{F}}
\newcommand{\bo}{\mathrm{b}}
\newcommand{\Diffb}[1][]{\operatorname{Diff}_{\bo}^{#1}}
\newcommand{\Psib}[1][]{\operatorname{\Psi}_{\bo}^{#1}}
\newcommand{\Hb}{H_{\bo}}
\DeclareMathOperator{\Dom}{Dom}
\newcommand{\CI}{\mathcal{C}^\infty}
\newcommand{\pa}{{\partial}}
\newcommand{\del}{\ensuremath{\partial}}
\newcommand{\ep}{\ensuremath{\varepsilon}}
\newcommand{\RR}{\mathbb{R}}
\newcommand{\Lap}{\Delta}
\newcommand{\cutoff}{\rho}
\DeclareMathOperator{\WF}{WF}
\newcommand{\dcal}{\mathcal{D}}
\newcommand{\loc}{\text{loc}}
\renewcommand{\mod}[1]{\ensuremath{ \left( \mathrm{mod} \ #1 \right)}}
\newcommand{\bbR}{\ensuremath{\mathbb{R}}} 
\newcommand{\bbZ}{\ensuremath{\mathbb{Z}}} 
\newcommand{\calC}{\ensuremath{\mathcal{C}}} 
\newcommand{\calD}{\ensuremath{\mathcal{D}}} 
\newcommand{\calE}{\ensuremath{\mathcal{E}}} 
\newcommand{\calK}{\ensuremath{\mathcal{K}}} 
\newcommand{\calU}{\ensuremath{\mathcal{U}}} 
\newcommand{\calV}{\ensuremath{\mathcal{V}}} 
\newcommand{\calW}{\ensuremath{\mathcal{W}}} 
\renewcommand{\vec}[1]{\ensuremath{\mathbf{#1}}} 
\newcommand{\barZ}{\ensuremath{\overline{\mathbf{Z}}}} 
\newcommand{\bsfD}{\ensuremath{{\bm{\mathsf{D}}}}} 
\newcommand{\bsfG}{\ensuremath{{\bm{\mathsf{G}}}}} 
\newcommand{\bsfJ}{\ensuremath{{\bm{\mathsf{J}}}}} 
\newcommand{\bsfK}{\ensuremath{{\bm{\mathsf{K}}}}} 
\newcommand{\bsfL}{\ensuremath{{\bm{\mathsf{L}}}}} 
\newcommand{\bsfQ}{\ensuremath{{\bm{\mathsf{Q}}}}} 
\newcommand{\bsfV}{\ensuremath{{\bm{\mathsf{V}}}}} 
\newcommand{\bsfJdot}{\ensuremath{\skew{3.45}{\dot}{{\bm{\mathsf{J}}}}}} 
\newcommand{\bmw}{\ensuremath{{\bm{w}}}} 
\newcommand{\bmD}{\ensuremath{{\bm{D}}}} 
\newcommand{\bmE}{\ensuremath{{\bm{E}}}} 
\newcommand{\bmU}{\ensuremath{{\bm{U}}}} 
\DeclareMathOperator{\LSp}{LSp} 
\DeclareMathOperator{\dist}{dist} 
\DeclareMathOperator{\Id}{Id} 
\DeclareMathOperator{\pr}{pr} 
\DeclareMathOperator{\inj}{inj} 
\DeclareMathOperator{\length}{length} 
\DeclareMathOperator{\Hess}{Hess} 
\DeclareMathOperator{\grad}{grad} 
\DeclareMathOperator{\ind}{ind} 
\DeclareMathOperator{\End}{End} 
\let\det=\relax
\DeclareMathOperator{\det}{det}
\newcommand{\defeq}{\ensuremath{\stackrel{\mathrm{def}}{=}}} 
\renewcommand{\del}{\partial} 
\newcommand{\upc}{\ensuremath{\mathrm{c}}} 
\newcommand{\upb}{\ensuremath{\mathrm{b}}} 
\newcommand{\upt}{\ensuremath{\mathrm{t}}} 
\newcommand{\To}{\ensuremath{\longrightarrow}} 
\newcommand*{\underdot}[1][]{\ensuremath{%
  \skew{14}{\underaccent{\dot}}{#1}}} 
\newcommand{\Tbstar}{{}^{\upb} T^*}
\newcommand{\Sbstar}{{}^{\upb} S^*}
\newcommand{\Tdot}{\ensuremath{\dot{T}}}
\newcommand{\coneT}{\ensuremath{{}^{\text{cone}} T}} 
\newcommand{\dtilde}[1][t]{\ensuremath{\stackbin[{\bsfD}]{#1}{\sim}}}
\newcommand{\gtilde}[1][t]{\ensuremath{\stackbin[{\bsfG}]{#1}{\sim}}}
\newcommand{\regtilde}[1][t]{\ensuremath{\stackbin{#1}{\sim}}}
\newcommand{\rtLap}{\ensuremath{\sqrt{ \smash[b]{ \Delta_g }}}} 
\newcommand{\IH}{\ensuremath{\mathit{IH}}} 
\newcommand{\IB}{\ensuremath{\mathit{IB}}} 
\newcommand{\HD}{\ensuremath{|\Omega|^{\frac{1}{2}}}} 
\newcommand{\scal}{\ensuremath{\mathrm{scal}}} 
\theoremstyle{plain}
\newtheorem*{maintheorem}{Main Theorem}
\title{The diffractive wave trace on manifolds with conic singularities}
\author[G.A.~Ford]{G.~Austin Ford}
\address{Stanford University}
\email{austin.ford@math.stanford.edu}
\author[J.~Wunsch]{Jared Wunsch}
\address{Northwestern University}
\email{jwunsch@math.northwestern.edu}
\begin{document}

\begin{abstract}
  Let $(X,g)$ be a compact manifold with conic singularities.  Taking $\Delta_g$
  to be the Friedrichs extension of the Laplace-Beltrami operator, we examine
  the singularities of the trace of the half-wave group
  $e^{- i t \sqrt{ \smash[b]{\Delta_g}}}$ arising from strictly diffractive
  closed geodesics.  Under a generic nonconjugacy assumption, we compute the
  principal amplitude of these singularities in terms of invariants associated
  to the geodesic and data from the cone point.  This generalizes the classical
  theorem of Duistermaat--Guillemin on smooth manifolds and a theorem of
  Hillairet on flat surfaces with cone points.
\end{abstract}

\maketitle

\setcounter{section}{-1}



\section{Introduction}
\label{sec:introduction}


In this paper, we consider the trace of the half-wave group
$\calU(t) \defeq e^{-it\sqrt{ \smash[b]{\Delta_g} }}$ on a compact manifold with
conic singularities $(X,g)$.  Our main result is a description of the
singularities of this trace at the lengths of closed geodesics undergoing
diffractive interaction with the cone points.  Under the generic assumption that
\begin{equation*}
  \label{eq:nonconjugacy-assumption-cone-points}
  \text{the cone points of $X$ are pairwise nonconjugate along the
    geodesic flow},
\end{equation*}
the resulting singularity at such a length $t = L$ has the oscillatory integral
representation
\begin{equation*}
  \int_{\bbR_\xi} e^{-i (t-L) \cdot \xi} \, a(t,\xi) \, d\xi ,
\end{equation*}
where the amplitude $a$ is to leading order
\begin{multline*}
  a(t,\xi) \sim L_0 \cdot (2\pi)^{\frac{kn}{2}} \, e^{\frac{i k \pi (n-3)}{4}}
  \cdot \chi(\xi) \, \xi^{-\frac{k(n-1)}{2}} \\
  \mbox{} \times \left[ \prod_{j = 1}^k i^{-m_{\gamma_j}} \cdot
    \bmD_{\alpha_j}(q_j,q_j') \cdot
    \dist_g^{\gamma_{j}}(Y_{\alpha_{j+1}},Y_{\alpha_{j}})^{-\frac{n-1}{2}} \cdot
    \Theta^{-\frac{1}{2}}(Y_{\alpha_{j}} \to Y_{\alpha_{j+1}}) \right]
\end{multline*}
as $|\xi| \to \infty$ and the index $j$ is cyclic in $\{1,\ldots,k\}$.  Here,
$n$ is the dimension of $X$ and $k$ the number of diffractions along the
geodesic, and $\chi$ is a smooth function supported in $[1,\infty)$ and equal to
$1$ on $[2,\infty)$.  $L_0$ is the primitive length, in case the
geodesic is an iterate of a shorter closed geodesic.  The product is over the diffractions undergone by the
geodesic, with $\bmD_{\alpha_j}$ a quantity determined by the functional
calculus of the Laplacian on the link of the $j$-th cone point $Y_{\alpha_j}$,
the factor $\Theta^{-\frac{1}{2}}(Y_{\alpha_{j}} \to Y_{\alpha_{j+1}})$ is
(at least on a formal level)
the
determinant of the differential of the flow between the $j$-th
and $(j+1)$-st cone points, and $m_{\gamma_j}$ is the Morse index of the
geodesic segment $\gamma_j$ from the $j$-th to $(j+1)$-st cone points.  All of
these factors are described in more detail below.

To give this result some context, we recall the known results for the
Laplace-Beltrami operator $\Lap_g = d_g^* \circ d$ on a \emph{smooth}
($\calC^\infty$) compact Riemannian manifold $(X,g)$.  In this setting, there is
a countable orthonormal basis for $L^2(X)$ comprised of eigenfunctions
$\varphi_j$ of $\Lap_g$ with eigenvalues
$\left\{\lambda_j^2\right\}_{j=0}^\infty$ of finite multiplicity and tending to
infinity.  The celebrated trace formula of Duistermaat and Guillemin
\cite{Duistermaat-Guillemin}, a generalization of the Poisson summation formula
to this setting, establishes that the quantity
\begin{equation*}
  \sum_{j=0}^\infty e^{-it\lambda_j}
\end{equation*}
is a well-defined distribution on $\bbR_t$.  Moreover, it satisfies the
``Poisson relation'':  it is singular only on the \emph{length spectrum} of
$(X,g)$,
\begin{equation*}
  \LSp(X,g) \defeq \left\{ 0 \right\} \cup \left\{ \pm L \in \bbR : \text{$L$ is
      the length of a closed geodesic in $(X,g)$} \right\} .
\end{equation*}
(This was shown independently by Chazarain \cite{Chazarain}; see also
\cite{Colin}.)  Subject to a nondegeneracy hypothesis, the singularity at the
length $t = \pm L$ of a closed geodesic has a specific leading form encoding the
geometry of that geodesic---the formula involves the linearized Poincar\'e map
and the Morse index of the geodesic.  The proofs of these statements center
around the identification
\begin{equation*}
  \sum_{j=0}^\infty e^{-it\lambda_j} = \Tr \calU(t)
\end{equation*}
where $\calU(t) = e^{-it\sqrt{\smash[b]{\Delta_g}}}$ is (half of) the propagator
for solutions to the wave equation on $X$; in particular, $\calU(t)$ is a
Fourier integral operator.

In this paper, we prove an analogue of the Duistermaat--Guillemin trace
formula on compact \emph{manifolds with conic singularities} (or ``conic
manifolds''), generalizing results of Hillairet
\cite{Hillairet:Contribution} from the case of flat surfaces with conic
singularities.  We again consider the trace $\Tr \calU(t)$, a spectral
invariant equal to $\sum_{j=0}^\infty e^{-it \lambda_j}$.  The Poisson
relation is complicated here by the fact that closed geodesics may have two
different meanings on a manifold with conic singularities.  On the one
hand, we may regard a geodesic striking a cone point as being legitimately
continued by any other geodesic emanating from a cone point.  On the other
hand, we may only take those geodesics which are (locally) uniformly
approximable by families of geodesics that miss the cone point.  It turns
out that singularities of solutions to the wave equation can propagate
along all geodesics in the former, broader interpretation, and this is the
phenomenon of ``diffraction.''  It was extensively (and explicitly) studied
for cones admitting a product structure by Cheeger and Taylor
\cites{Cheeger-Taylor1,Cheeger-Taylor2}.  We refer to geodesics of this
broader type as \emph{diffractive geodesics,} and call them \emph{strictly
  diffractive} if they are \emph{not} (locally) approximable by ordinary
geodesics.  We refer to the (locally) approximable geodesics as
\emph{geometric geodesics}.  In \cite{Melrose-Wunsch1}, Melrose and the
second author showed that singularities of solutions to the wave equation
propagate along diffractive geodesics, although the singularities at
strictly diffractive geodesics are generically weaker than the
singularities at the geometric geodesics.  In \cite{Wunsch:Poisson} the
second author used this fact to prove that the singularities of the wave
trace on a conic manifold are a subset of the length spectrum $\LSp(X,g)$,
consisting again of zero and the positive and negative lengths of closed
geodesics.  A new wrinkle in this case is the fact that singularities at
closed strictly diffractive geodesics are weaker than the singularities at
ordinary or geometric closed geodesics, reflecting the analogous phenomenon
for the propagation of singularities.

Expanding upon these previous works, we describe explicitly in this paper the
leading order behavior of the singularities at lengths of closed diffractive
geodesics.  We must assume that these geodesics are isolated and appropriately
nondegenerate, essentially in the sense that no pair of cone points are mutually
conjugate along the geodesic flow of the manifold.  Note that these
hypotheses are generically satisfied and moreover, the diffractive closed
geodesics are generically the only closed geodesics apart from interior
ones, where the contribution to the trace is already known \cite{Duistermaat-Guillemin}.

To describe these leading
order asymptotics, we need to briefly set up some of the framework.  Let us
suppose that $\gamma$ is a diffractive geodesic undergoing diffraction with $k$
cone points and repeating with period $T$ (so $T$ is the primitive length of
$\gamma$). We let $\gamma_j$ denote the segment of geodesic from the $j$-th to
the $(j+1)$-st cone point, and we denote by $m_{\gamma_j}$ the Morse index of
each of these segments.  As the link of the $j$-th cone point is a Riemannian
manifold $Y_j$, we may consider the operator
\begin{equation*}
  \nu_j \defeq \sqrt{ \Delta_{Y_j} + \left( \frac{2-n}{2} \right)^2 }
\end{equation*}
on the link as well as the operators in its functional calculus.  The
ordinary propagation of singularities implies that the kernel of the
half-Klein-Gordon propagator $e^{-it\nu_j}$ is smooth away from points
distance $\abs{t}$ apart.  In particular, if $q_j$ and $q_j'$ are the
initial and terminal points on $Y_j$ of the geodesic segments $\gamma_j$
and $\gamma_{j-1}$ respectively along a diffractive geodesic, then the
Schwartz kernel of the half-Klein-Gordon propagator $e^{-i \pi\nu_j}$ is
smooth near $(q_j,q_j')$.  We write $\bmD_{\alpha_j}(q_j,q_j')$ for this
value $\calK \! \left[ e^{-i \pi\nu_j} \right] \! (q_j,q_j')$.  Finally,
for each segment $\gamma_j$ we define an invariant $\Theta(Y_{\alpha_j} \to
Y_{\alpha_{j+1}})$ in Section~\ref{section:Jacobi} below, letting $j$ range
cyclically over $\{1,\ldots,k\}$.  This invariant can be viewed in more
than one way: it looks formally like the determinant of the differential of the
flow from one cone point to the next, but owing to the singular of nature
of this flow, we employ a definition in terms
of (singular) Jacobi fields; alternatively, it measures the tangency of the
intersection along $\gamma_j$ of the geodesic ``spheres'' centered at the
successive cone points, via an interpretation in terms of Wronskians of
Jacobi fields vanishing at successive cone points (see
Section~\ref{sec:proof-theorem-int-ampl} for the latter interpretation).

\begin{maintheorem}
  \phantomsection
  \label{theorem:main}
  For $t$ sufficiently close to $L$, the wave trace $\Tr \calU(t)$ is a conormal
  distribution with respect to $t=L$ of the form
  \begin{equation*}
    \int_{\bbR_\xi} e^{-i (t-L) \cdot \xi} \, a(t,\xi) \, d\xi .
  \end{equation*}
  Its amplitude $a \in S^{-\frac{k(n-1)}{2}}(\bbR_t \times \bbR_\xi)$ has the
  leading order asymptotics
  \begin{multline*}
    a(t,\xi) \equiv L \cdot (2\pi)^{\frac{kn}{2}} \, e^{\frac{i k \pi (n-3)}{4}}
    \cdot \chi(\xi) \, \xi^{-\frac{k(n-1)}{2}} \\
    \mbox{} \times \left[ \prod_{j = 1}^k i^{-m_{\gamma_j}} \cdot
      \bmD_{\alpha_j}(q_j,q_j') \cdot
      \dist_g^{\gamma_{j}}(Y_{\alpha_{j+1}},Y_{\alpha_{j}})^{-\frac{n-1}{2}}
      \cdot \Theta^{-\frac{1}{2}}(Y_{\alpha_{j}} \to Y_{\alpha_{j+1}}) \right]
  \end{multline*}
  modulo elements of $S^{-\frac{k(n-1)}{2} - \frac{1}{2} + \varepsilon}$ for any
  $\varepsilon > 0$, where $\chi(\xi) \in \mathcal{C}^\infty(\bbR)$ is supported
  in $[1,\infty)$ and equal to $1$ for $\xi > 2$.
\end{maintheorem}

The rest of the paper is organized as follows.  In
Section~\ref{sec:conic-geometry} we review the geometry of manifolds with conic
singularities, in particular the geometry of geodesics and Jacobi fields.
Section~\ref{section:singleproduct} contains the calculation of the principal
amplitude of the diffractive part of the half-wave propagator near the cone
point of a metric cone, and Section~\ref{section:single} generalizes this
calculation to the wider class of conic manifolds.  In
Section~\ref{sec:amplitude-mult-diff}, we use the previous work to calculate the
principal amplitude of a multiply-diffracted wave on a manifold with (perhaps
multiple) cone points, and the proofs of the required results make up
Sections~\ref{sec:proof-theorem-int-ampl} and \ref{sec:proof-theorem-mult-diff}.
Finally, using a microlocal partition of unity developed in
Section~\ref{section:partition}, we compute the trace of the half-wave group
along the diffractive closed geodesics in Section~\ref{sec:diff-wave-trace}.  At
the end, we include Appendix~\ref{sec:lagrangian-dists-amplitudes} as a brief
review of the theory of Lagrangian distributions and their amplitudes.

\subsection*{Notation}
\label{sec:notation}

We use the following pieces of notation throughout this work.
\begin{enumerate}[\hspace*{.25em}$\bullet$]
\item If $\left\{ \mathcal{Y}^s : s \in \bbR \right\}$ is an $\bbR$-filtered
  collection of vector spaces with inclusions
  $\mathcal{Y}^s \subseteq \mathcal{Y}^t$ if $s < t$, then we write
  \begin{equation*}
    \mathcal{Y}^{t - 0} \defeq \bigcup_{s < t} \mathcal{Y}^s .
  \end{equation*}
  Similarly, if the inclusions are of the form
  $\mathcal{Y}^s \supseteq \mathcal{Y}^t$ if $s < t$, then we write
  \begin{equation*}
    \mathcal{Y}^{t + 0} \defeq \bigcap_{s > t} \mathcal{Y}^s .
  \end{equation*}
\item If $\pr : E \To Z$ is a vector bundle over a manifold $Z$ and
  $V \subseteq E$ is a subset of this bundle, then we write
  $V^\flat \defeq \pr[V]$ for the projection of this subset to the base
  manifold.
\item If $Z$ is a smooth manifold, then we let
  $\dot{T}^* Z \defeq T^*Z \setminus \vec{0}$ be its punctured cotangent bundle,
  where $\vec{0} \subseteq T^*Z$ is the zero section.
\item We write
  $\mathcal{F}[u](\xi) \defeq (2\pi)^{-\frac{n}{2}} \int_{\bbR^n} e^{- i x \cdot
    \xi} \, u(x) \, dx$ for the unitary normalization of the Fourier transform.
\end{enumerate}

\subsection*{Acknowledgements}
The authors are pleased to thank Andrew Hassell, Luc Hillairet, Richard Melrose,
Andr\'as Vasy, and Steve Zelditch for helpful conversations.  GAF gratefully
acknowledges support from NSF Postdoctoral Fellowship \#1204304, and JW
gratefully acknowledges support from NSF Grant DMS-1265568.



\section{Conic geometry}
\label{sec:conic-geometry}

In this section we recall the basic notions of the geometry of manifolds with
conic singularities and the analysis of distributions on them.  A conic manifold
of dimension $n$ is a Riemannian manifold with boundary $(X,g)$ whose metric is
nondegenerate over the interior $X^\circ$ but singular at the boundary $\del X$.
Near the boundary it is assumed to take the form
\begin{equation*}
  g = dx^2 + x^2 \, h(x,dx;y,dy)
\end{equation*}
for some boundary defining function\footnote{$x$ is a defining function for the
  boundary if $\del X = \{x = 0\}$ and $dx \big\vert_{\del X} \neq 0$.} $x$,
where $h$ is a smooth symmetric tensor of rank $2$ restricting to be a metric on
$\del X$.  Writing $Y$ for the boundary $\del X$, this restriction has the
effect of reducing each of the components $Y_\alpha$ of $Y$ to a ``cone point.''
Thus, we refer to these components $Y_\alpha$ as the ``cone points'' of our
manifold throughout the rest of this work.

\begin{figure}
  \centering

  \begin{minipage}[b]{.5\linewidth}
    \centering
    \includegraphics{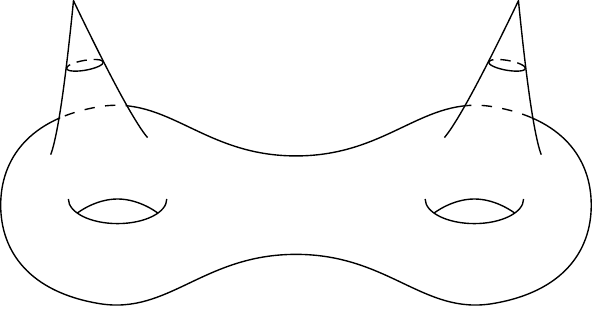}
    \subcaption{The metric picture}
    \label{fig:mfld-conic-sings-metric}
  \end{minipage}%
  \begin{minipage}[b]{.5\linewidth}
    \centering
    \includegraphics{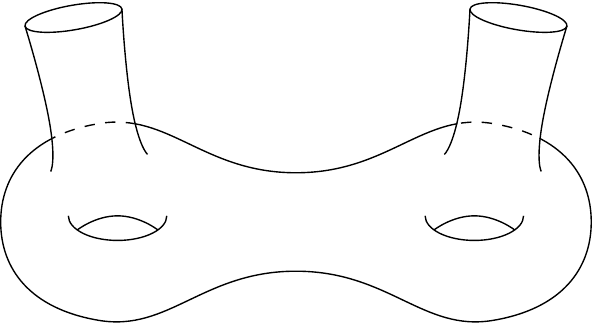}
    \subcaption{The topological (or ``blown-up'') picture}
    \label{fig:mfld-conic-sings-top}
  \end{minipage}

  \caption{A manifold with conic singularities}
  \label{fig:mfld-conic-sings}
\end{figure}

In Section~1 of \cite{Melrose-Wunsch1} the authors show the existence of a
defining function $x$ decomposing a collar neighborhood of the boundary into a
product $[0,\varepsilon)_x \times Y$ such that in the associated local product
coordinates $(x,y)$ the metric decomposes as a product
\begin{equation}
  \label{semiproduct}
  g = dx^2 + x^2 \, h(x,y,dy) .
\end{equation}
In particular, when written in these coordinates $h$ is a smooth family in $x$
of metrics on $Y$.
Equivalently, the family of curves $[0,\ep) \times \{y_0\}$ parametrized by
$y_0 \in Y_\alpha$ are geodesics reaching the boundary component $Y_\alpha$ and
foliating the associated collar neighborhood.  These are moreover the
\emph{only} geodesics reaching $Y_\alpha$.  \emph{Henceforth, we will always
  assume that the metric has been reduced to this normal form.}  We write
\begin{equation*}
  h_0 \defeq h\big\vert_{x=0} \quad \text{and} \quad h_\alpha \defeq h
  \big\vert_{Y_\alpha}
\end{equation*}
for the induced metric on the boundary as a whole and an individual boundary
component $Y_\alpha$ respectively.  Additionally, we define $x_* \defeq \sup x$
to be the supremum of the value of the designer boundary defining function,
i.e., an upper bound on the distance from $\del X$ for which $x$ is defined.  We
also write $x_\alpha$ for the restriction of $x$ to the corresponding connected
neighborhood of the cone point $Y_\alpha$; it is thus a designer boundary
defining function for this cone point.

\subsection{Operators and domains}
\label{sec:operators-domains}

Let $(z_1,\ldots,z_n)$ be a local system of coordinates on the interior
$X^\circ$.  We consider the Friedrichs extension of the Laplace operator
\begin{equation*}
  \Lap_g \defeq \frac{1}{\sqrt{\smash[b]{\det g(z)}}} \sum_{j,k=1}^n D_{z_j} \,
  g^{jk}(z) \sqrt{ \det \smash[b]{g(z)}} \, D_{z_k} 
\end{equation*}
from the domain $\mathcal{C}_\upc^\infty(X^\circ)$.  Here,
$D_{z_j} = \frac{1}{i} \, \del_{z_j}$ is the Fourier-normalization of the
$z_j$-derivative.  Using our preferred product coordinates $(x,y)$ near the
boundary, we compute
\begin{equation}
  \label{conelaplacian}
  \Delta_g = D_x^2 -\frac{i \left[(n-1) + x \, e(x) \right]}{x} \, D_x +
  \frac{1}{x^2} \, \Delta_{h(x)},
\end{equation}
where $e(x)$ is the function
\begin{equation*}
  e(x) = \frac{1}{2} \frac{\pa\log\det h(x)}{\pa x}= \frac{1}{2} \tr
  \left[h^{-1}(x) \, \frac{\pa h(x)}{\pa x} \right] .
\end{equation*}
This is the Laplace operator considered as an operator on scalars.  In the
sequel, however, we will primarily work with the version of the operator
$\Lap_g$ acting on half-densities.  To define this, we trivialize the
half-density bundle using the convention that the metric half-density
$\omega_g$, which in the local $z$-coordinates is
\begin{equation*}
  \omega_g = \left[ \det g(z) \right]^{\frac{1}{4}} \left| dz_1
    \wedge \cdots \wedge dz_n \right|^\frac{1}{2} ,
\end{equation*}
is annihilated by $\Lap_g$.  In other words, the action of $\Lap_g$ on a
general half-density $f(z) \, \omega_g$ is
\begin{equation*}
  \Lap_g \left[f(z) \, \omega_g \right] = \left[ \frac{1}{\sqrt{\smash[b]{\det
          g(z)}}} \sum_{j,k=1}^n D_{z_j} \, g(z)^{jk} \sqrt{ \smash[b]{ \det
        g(z)}} \, D_{z_k} f(z) \right] \omega_g .
\end{equation*}
Note that near the boundary we have
\begin{equation}
  \label{metrichalfdensity}
  \omega_g = x^{\frac{n-1}{2}} \left[ \det h(x) \right]^{\frac{1}{4}} \left| dx
    \wedge dy_1 \wedge \cdots \wedge dy_{n-1} \right|^\frac{1}{2}, 
\end{equation}
with $h(x)$ the family of induced metrics on the boundary.

From $\Delta_g$ we construct its complex powers $\Delta_g^z$ via the functional
calculus.  We use the domains of the real powers,
\begin{equation*}
  \calD_s \defeq \Dom \! \left[ \Delta_g^{s/2} : L^2\! \left(X ; \HD(X)\right)
    \To L^2 \! \left( X ; \HD(X) \right) \right] ,  
\end{equation*}
as the principal regularity spaces in this work.  Here,
$L^2 \! \left( X ; |\Omega|^\frac{1}{2}(X) \right)$ are the $L^2$-half-densities
on $X$, so each domain is a space of distributional
half-densities.  

An alternate characterization of these domains comes in terms of b-Sobolev
spaces, which we now recall.  First, set $\calV_\upb(X)$ to be the Lie algebra
of all vector fields on $X$ tangent to $\del X$, and let $\Diff_\upb^*(X)$
denote the filtered algebra of differential operators generated by these vector
fields over $\CI(X)$.  For $m \in \bbZ_{\geqslant 0}$ we define the b-Sobolev
space
\begin{equation*}
  \Hb^m(X) = \left\{ u \in L^2 \! \left( X ;
      \HD(X) \right) : A u \in L^2 \! \left( X ;
      \HD(X) \right)
    \text{ for all } A \in \Diffb[m](X) \right\} .
\end{equation*}
More generally, we define the b-Sobolev spaces $H^s_\upb(X)$ for all real orders
$s$ by either interpolation and duality or by substituting Melrose's b-calculus
of pseudodifferential operators $\Psib[s](X)$ for the differential
b-operators---see \cite{Melrose:APS} for further details on the latter method.

\begin{proposition}[\cite{Melrose-Wunsch1}\footnote{We use a different
    convention for the density with respect to which $L^2$ is defined than was
    used in \cite{Melrose-Wunsch1}.  The b-weight $\frac{dx}{x} \, dy$ was used
    in that work rather than the metric weight $\omega_g^2$.}]
  \label{thm:domains-b-Sobolev}
  For $\abs{s} < \frac{n}{2}$ there is an identification
  \begin{equation*}
    \dcal_s = x^{-s} \, \Hb^s(X).
  \end{equation*}
\end{proposition}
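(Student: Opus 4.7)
The plan is to identify $\dcal_s$ with $x^{-s}\Hb^s(X)$ by exploiting the fact that $x^2\Lap_g\in\Diffb[2](X)$ is b-elliptic uniformly up to $\del X$, so that the Friedrichs complex powers $\Lap_g^{s/2}$ can be analyzed in Melrose's b-calculus. I would begin with anchor cases at integer $s$. For $s=0$ the identification $\dcal_0 = L^2(X;\HD(X)) = \Hb^0(X)$ is immediate from the stated conventions. For $s=1$, valid whenever $n\geqslant 3$ so that $1 < n/2$, I would invoke the variational characterization of the Friedrichs extension: $\dcal_1$ is the completion of $\CcI(X^\circ)$ in the norm $\bigl(\|u\|_{L^2}^2+\|du\|_{L^2}^2\bigr)^{1/2}$. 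Writing this norm in the product coordinates $(x,y)$ near each cone point unpacks to $\int_X(|u|^2+|\del_x u|^2+x^{-2}|d_Y u|^2_{h(x)})\,dV_g$; since the $\del_{y_j}$ lie in $\Vb(X)$ while the factor $x^{-1}$ merely shifts the b-weight, one obtains $\dcal_1 = x^{-1}\Hb^1(X)$ after invoking a Hardy-type inequality, itself valid precisely when $n>2$.

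For $s=2$, valid when $n\geqslant 5$, I would invoke b-elliptic regularity: the inclusion $x^{-2}\Hb^2(X)\subseteq\dcal_2$ is immediate from the mapping property $\Lap_g\colon x^{-2}\Hb^2(X)\to L^2$, while the reverse inclusion reduces to controlling the boundary asymptotic expansion of $u\in\dcal_1$ with $\Lap_g u\in L^2$. After separating variables into eigenmodes of $\Lap_{h_\alpha}$ on each link and Mellin-transforming in $x$, the scalar indicial exponents of $x^2\Lap_g$ at each link are $\beta = -\tfrac{n-2}{2}\pm\sqrt{(\tfrac{n-2}{2})^2+\mu^2}$ for the link eigenvalue $\mu^2$; the critical pair at $\mu=0$ consists of $\beta=0$ and $\beta=-(n-2)$. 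The hypothesis $|s|<n/2$ places the weight exponent $-s$ strictly between the $L^2$ threshold $-n/2$ and the critical indicial exponent $0$ for the zero link-mode, so that the Friedrichs selection of the regular asymptotic coincides with the unique b-calculus-admissible boundary term and yields $\dcal_2 = x^{-2}\Hb^2(X)$.

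The remaining real values $|s|<n/2$ then follow by complex interpolation between consecutive integer anchors, together with duality for negative $s$ via the $L^2(X;\HD(X))$ pairing: one has $(\dcal_s)^* = \dcal_{-s}$ and $(x^{-s}\Hb^s(X))^* = x^{s}\Hb^{-s}(X)$, matching the identifications on both sides. The main technical obstacle is the careful accounting of the half-density trivialization when locating the indicial roots of $x^2\Lap_g$ in the Mellin plane, and verifying that no indicial root intersects the line $\operatorname{Im}(\sigma)=s$ for $|s|<n/2$; this nonindicial condition, dictated by the zero link-mode, is exactly what the threshold $n/2$ encodes and is precisely what guarantees that the Friedrichs domain coincides with the natural b-calculus weighted Sobolev space throughout this strip of weights.
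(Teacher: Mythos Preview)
The paper does not prove this proposition; it is quoted from \cite{Melrose-Wunsch1} (with a change of density convention noted in the footnote) and no argument is given here. So there is no in-paper proof to compare against.

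Your sketch has the right ingredients---the variational description of the Friedrichs form domain, the Hardy inequality for $n>2$, b-ellipticity of $x^2\Lap_g$, and the indicial-root analysis locating the zero-mode exponents at $0$ and $-(n-2)$ relative to the $L^2$ threshold $-n/2$---and these are indeed what drive the result in \cite{Melrose-Wunsch1}. The gap is in the interpolation step. Anchoring only at integers $0,1,\dots,\lfloor (n-1)/2\rfloor$ and then interpolating and dualizing reaches only $|s|\leqslant\lfloor (n-1)/2\rfloor$, not the full open strip $|s|<n/2$. For $n=2$ your only anchor is $s=0$ and the argument yields nothing; for $n=3,4$ you miss $1<|s|<n/2$; for odd $n$ in general you miss the half-integer band above the top anchor. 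You cannot repair this by interpolating past the boundary, since the identification $\dcal_s=x^{-s}\Hb^s$ genuinely fails once $|s|\geqslant n/2$ (extra polyhomogeneous terms enter the domain description).

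The remedy---and what \cite{Melrose-Wunsch1} actually does---is the route you allude to in your opening sentence but then abandon: one works directly with the b-calculus description of the resolvent or complex powers, showing that $(\Lap_g+1)^{-s/2}$ lies in a weighted b-pseudodifferential class whose mapping properties on the scale $x^\alpha\Hb^m(X)$ are known, and reads off $\dcal_s$ from those mapping properties for all real $s$ at once. The restriction $|s|<n/2$ then appears exactly as the condition that the weight line avoids the indicial set, which you identified correctly. Committing to that b-calculus argument throughout, rather than reverting to integer anchors, closes the gap.
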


It further follows from the analysis in \cite{Melrose-Wunsch1} that for every
$s \in \bbR$ we have
\begin{equation*}
  \rtLap : x^{-1} \Hb^s(X) \To \Hb^{s-1}(X) \ \text{if $n > 2$},
\end{equation*}
while in the case $n=2$ (which is nearly always a borderline case in such
computations),
\begin{equation*}
  \rtLap : x^{-1+\ep} \, \Hb^s(X) \To \Hb^{s-1}(X) \text{ for all $\varepsilon >
    0$ (when $n = 2$)}.
\end{equation*}
Moreover, it follows from the more detailed description of these complex powers
$\Delta_g^z$ in \cite{Loya} that $\rtLap$ is \emph{microlocal} over the interior
manifold $X^\circ$:  its Schwartz kernel is that of a pseudodifferential
operator over compact sets in $X^\circ \times X^\circ$.  This is a fact which
will be implicitly used in our analysis below.

From the Laplace-Beltrami operator $\Delta_g$ we construct the d'Alembertian (or
wave operator) acting on the spacetime $\bbR \times X$,
\begin{equation*}
  \Box_g \defeq D_t^2 - \Lap_g,
\end{equation*}
and we consider this operator acting on half-densities on $X$ lifted to the
product spacetime.  We define the \emph{half-wave group} $\calU(t)$ as
\begin{equation*}
  \calU(t) \defeq  e^{-it\rtLap},
\end{equation*}
again considered as acting on half-densities.  As usual, note that
$\Box_g \!  \left[ \calU(t) \mu \right] = 0$ for all
$\mu \in L^2 \! \left( X ; \HD(X) \right)$ (or more general distributional
half-densities $\mu$).  We remark that conjugating $\calU(t)$ to get between
scalars and half-densities has no effect on the overall trace of the group, and
hence the introduction of half-densities is merely for computational convenience
and clarity.

\subsection{Diffractive and geometric geodesics}
\label{sec:diff-geom-geodesics}

Two different notions of geodesic exist on a conic manifold $X$, one more
restrictive and one less restrictive.  We now recall these, following the
exposition from \cite{Baskin-Wunsch:Resolvent}.
\begin{definition}
  Suppose $\gamma$ is a \emph{broken geodesic}, i.e., a union of a finite number
  of closed, oriented geodesic segments $\gamma_1, \ldots, \gamma_N$ in $X$.
  Let $\gamma_j$ be parametrized by the interval $[T_j,T_{j+1}]$.
  \begin{enumerate}[(AA)]
  \item[({$\bm{\mathsf{D}}$})] The curve $\gamma$ is a \emph{diffractive
      geodesic} in $X$ if
    \begin{itemize}
    \item[(i)] all end points except possibly the initial point $\gamma_1(T_1)$
      and the final point $\gamma_N(T_{N+1})$ of $\gamma$ lie in the boundary
      $Y \defeq \del X$, and
    \item[(ii)] the intermediate terminal points $\gamma_j(T_{j+1})$ lie in the
      same boundary component as the initial points $\gamma_{j+1}(T_{j+1})$ for
      each $j = 1,\ldots,N-1$.
    \end{itemize}
  \item[({$\bm{\mathsf{G}}$})] The curve $\gamma$ is a \emph{(partially)
      geometric geodesic} if it is a diffractive geodesic such that for some
    $j = 1,\ldots,N-1$ the intermediate terminal point $\gamma_j(T_{j+1})$ and
    the initial point $\gamma_{j+1}(T_{j+1})$ are connected by a geodesic of
    length $\pi$ in the boundary component $Y_\alpha$ in which they lie (with
    respect to the boundary metric $h_0 \defeq h \big\vert_Y$).  If this is true
    for all $j = 1,\ldots,N-1$, then we call $\gamma$ a \emph{strictly geometric
      geodesic}; if it is never true for $j = 1,\ldots,N-1$, then $\gamma$ is a
    \emph{strictly diffractive geodesic}.
  \end{enumerate}
\end{definition}
\begin{figure}
  \centering
  \begin{minipage}[b]{.5\linewidth}
    \centering
    \includegraphics{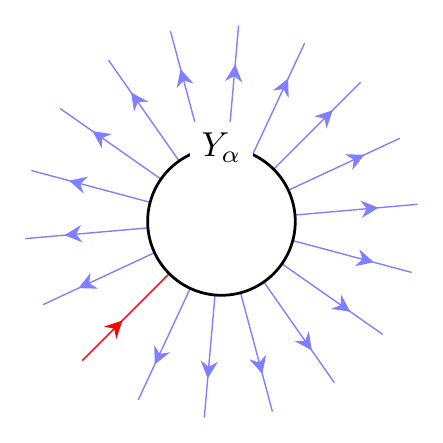}
    \subcaption{Diffractive geodesics}
    \label{fig:diffractive-geodesics}
  \end{minipage}%
  \begin{minipage}[b]{.5\linewidth}
    \centering
    \includegraphics{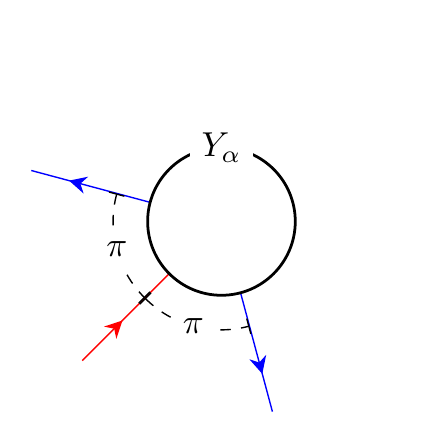}
    \subcaption{Geometric geodesics}
    \label{fig:geometric-geodesic}
  \end{minipage}
  \caption{Diffractive and geometric geodesics}
  \label{fig:geom-diff-geodesics}
\end{figure}
As described in \cite{Melrose-Wunsch1}, the geometric geodesics are those that
are locally realizable as limits of families of ordinary geodesics in the
interior $X^\circ$; see also \cite{Baskin-Wunsch:Resolvent}*{Section 2.1} for a
discussion of the question of local versus global approximability.

We shall also use the geodesic flow at the level of the cotangent bundle, and it
is helpful to see how this behaves uniformly up to $\del X$ (although in this
paper, microlocal considerations will only arise over $X^\circ$---a considerable
simplification).  To describe this flow, let $\Tbstar X$ denote the b-cotangent
bundle of $X$, i.e., the dual of the bundle ${}^\upb TX$ whose sections are
$\calV_\upb(X)$, the smooth vector fields tangent to $\pa X$.  We write
$\Sbstar X$ for the corresponding sphere bundle.  The b-cotangent bundle comes
equipped with a canonical 1-form ${}^\upb \alpha$, which in our product
coordinates near the boundary is
\begin{equation*}
  {}^\upb \alpha = \xi \, \frac{dx}{x} + \eta \cdot dy ,
\end{equation*}
and we write ${}^\upb \sigma \defeq d \! \left( {}^\upb \alpha \right)$ for the
associated symplectic form.  This defines the natural coordinate system
$(x,\xi;y,\eta)$ on $\Tbstar X$ near $Y$.  (We refer the reader to Chapter 2 of
\cite{Melrose:APS} for a detailed explanation of ``b-geometry,'' of which we
only use the rudiments here.)

Let $\vec{H}_g$ be the Hamilton vector field (with respect to ${}^\upb \sigma$)
of the metric function $\frac{1}{2} \, g$, the symbol of $\frac{1}{2} \Delta_g$
on $\Tbstar X$; in product coordinates near $Y$, this is
\begin{equation*}
  \frac{g}{2} = \frac{1}{2} \, \frac{\xi^2 + h(x,y,\eta)}{x^2},
\end{equation*} 
With this normalization, $\vec{H}_g$ is the geodesic spray in $\Tbstar X$ with
velocity $\sqrt{g}$.  It is convenient to rescale this vector field so that it
is both tangent to the boundary of $X$ and homogeneous of degree zero in the
fibers.  Near a boundary component $Y_\alpha$, we have (see
\cite{Melrose-Wunsch1})
\begin{equation}
  \vec{H}_g = x^{-2} \left\{ \vec{H}_{Y_\alpha}(x) + \left[ \xi^2 + h(x,y,\eta)
      + \frac{x}{2} \, \frac{\del h}{\del x} \right] \del_\xi + \xi x \, \pa _x
  \right\}, 
  \label{spray}
\end{equation}
where $\vec{H}_{Y_\alpha}(x)$ is the geodesic spray in $Y_\alpha$ with respect
to the family of metrics $h_\alpha(x, \cdot)$.  Hence, our desired rescaling is
\begin{equation*}
  {\vec{Z}} \defeq \frac{x}{\sqrt g} \, \vec{H}_g .
\end{equation*}
(Note that $g$ here refers to the metric function on the b-cotangent bundle and
not the determinant of the metric tensor.)  By the homogeneity of $\vec{Z}$, if
we radially compactify the fibers of the cotangent bundle and identify
$\Sbstar X$ with the ``sphere at infinity'', then $\vec{Z}$ is tangent to
$\Sbstar X$ and may be restricted to it.  Henceforth, we let $\barZ$ denote this
restriction of $\frac{x}{\sqrt{g}} \, \vec{H}_g$ to the \emph{compact} manifold
$\Sbstar X$.  On the sphere bundle $\Sbstar X$, we replace the b-dual
coordinates $(x,\xi;y,\eta)$ in a neighborhood of the boundary by the
(redundant) coordinate system
\begin{equation*}
  \left(x, \bar\xi; y, \bar\eta \right) \defeq \left( x, \frac{\xi}{\sqrt{\xi^2
        + h(x,y;\eta)}} ; y,  \frac{\eta}{\sqrt{\xi^2 + h(x,y;\eta)}} \right) .
\end{equation*}
Using these coordinates, it is easy to see that $\barZ$ vanishes only at the
critical manifold $\{x = 0, \bar\eta=0\}$ over $\pa X$, and thus the closures of
maximally extended integral curves of this vector field can only begin and end
over $\pa X$.  Since $\barZ$ is tangent to the boundary, such integral curves
either lie entirely over $\pa X$ or lie over $\pa X$ only at their limit points.
Hence, the interior and boundary integral curves giving rise to our broken
geodesics can meet only at their limit points in this critical submanifold
$\{x=0, \bar\eta=0\} \subseteq \Sbstar X.$

We now introduce a way of measuring the lengths of the integral curves of
$\barZ$.  Suppose $\gamma$ is such an integral curve over the interior
$X^\circ$.  Let $k$ be a Riemannian metric on $\Sbstar X^\circ$ such that
$\barZ$ has unit length, i.e., $k(\barZ,\barZ)=1$, and let
\begin{equation}
  \lambda = x \, k(\cdot, \barZ) \in \Omega^1 \! \left(\Sbstar X\right).
  \label{omega}
\end{equation}
Then
\begin{equation*}
  \int_\gamma \lambda= \int_\gamma x \,  k\!\left (\frac{d\gamma}{ds},
    \barZ\right) ds = \int_\gamma \frac{x}{\sqrt g} \,  k\!\left( \vec{H}_g,
    \barZ \right) ds = \int_\gamma ds = \length(\gamma) ,
\end{equation*}
where $s$ parametrizes $\gamma$ as an integral curve of
$\frac{1}{\sqrt{g}} \, \vec{H}_g$, the unit speed geodesic spray.  Given this
setup, we may define two symmetric relations between points in $\Sbstar X$:  a
``geometric'' relation and a ``diffractive'' relation.  These correspond to the
two different possibilities for linking these points via geodesic flow through
the boundary.

\begin{definition}\label{defn:relations}
  Let $p$ and $p'$ be points of the b-cosphere bundle $\Sbstar X$.
  \begin{enumerate}[(a)]
  \item We write $p \dtilde p'$ if there exists a piecewise smooth but not
    necessarily continuous curve $\gamma:  [0,1] \To \Sbstar X$ with
    $\gamma(0)=p$ and $\gamma(1)=p'$ and such that $[0,1]$ can be decomposed
    into a finite union of closed subintervals $I_j$, intersecting at their
    endpoints, where
    \begin{enumerate}[(i)]
    \item on each $I_j^\circ$, $\gamma$ is a (reparametrized) positively
      oriented integral curve of $\barZ$ in $\Sbstar X^\circ$;
    \item the final point of $\gamma$ on $I_j$ and the initial point of $\gamma$
      on $I_{j+1}$ lie over the same component $Y_\alpha$ of $\pa X$; and
    \item $\int_\gamma \lambda = t$ for
      $\lambda \in \Omega^1\!\left( \Sbstar X \right)$ as in \eqref{omega}.
    \end{enumerate}
  \item We write $p \gtilde p'$ if there exists a \emph{continuous} and
    piecewise smooth curve $\gamma:  [0,1] \To \Sbstar X$ with $\gamma(0)=p$ and
    $\gamma(1)=p'$ such that $[0,1]$ can be decomposed into a finite union of
    closed subintervals $I_j$, intersecting at their endpoints, where
    \begin{enumerate}[(i)]
    \item on each $I_j^\circ$, $\gamma$ is a (reparametrized) positively
      oriented integral curve of $\barZ$ in $\Sbstar X$;
    \item on successive intervals $I_j$ and $I_{j+1}$, interior and boundary
      curves alternate; and
    \item $\int_\gamma \lambda = t$.
    \end{enumerate}
  \end{enumerate}
\end{definition}

We know from the preceding discusion that the integral curves of $\barZ$ over
$X^\circ$ are lifts of geodesics in $X^\circ$.  It follows from the formula
\eqref{spray} for $\vec{H}_g$ near the boundary that the maximally extended
integral curves of $\barZ$ in the restriction $\Sbstar_{\pa X} X$ of the
b-cosphere bundle to the boundary are the lifts of geodesics of length $\pi$ in
$\pa X$ (see \cite{Melrose-Wunsch1} for details).  Hence, we may conclude the
following proposition.

\begin{proposition}
  Suppose that $p$ and $p'$ are points of $\Sbstar X$.  Then
  \begin{enumerate}[(a)]
  \item $p\gtilde p'$ if and only if $p$ and $p'$ are connected by a (lifted)
    geometric geodesic of length $t$, and
  \item $p\dtilde p'$ if and only if $p$ and $p'$ are connected by a (lifted)
    diffractive geodesic of length $t$.
  \end{enumerate}
\end{proposition}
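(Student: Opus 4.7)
The plan is to translate Definition~\ref{defn:relations} into the language of broken geodesics using three facts established immediately above: (I)~integral curves of $\barZ$ over $X^\circ$ are reparametrizations of unit-speed geodesics in $X^\circ$; (II)~maximally extended integral curves of $\barZ$ inside $\Sbstar_{\pa X} X$ are lifts of $h_0$-unit-speed geodesics of length $\pi$ in $\pa X$; and (III)~interior and boundary integral curves can only meet at points of the critical submanifold $\{x = 0,\bar\eta = 0\} \subset \Sbstar X$. In addition, the length form $\lambda$ from \eqref{omega} satisfies $\int \lambda = \length$ along interior integral curves and vanishes on $\Sbstar_{\pa X} X$ (since $x = 0$ there), so that $\int_\gamma \lambda$ in clause (iii) of Definition~\ref{defn:relations} is precisely the sum of the lengths of the interior pieces of $\gamma$, matching the convention that the length of a broken geodesic omits boundary-geodesic pieces.

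For part (a), assume $p \dtilde p'$ is witnessed by a piecewise-smooth (not necessarily continuous) curve $\gamma$ as in the definition. Each interior piece of $\gamma$ projects by (I) to a geodesic segment $\gamma_j \subset X^\circ$ whose endpoints limit to $\pa X$. Clause~(ii) of Definition~\ref{defn:relations}(a), which asserts that the terminal point of $\gamma$ on $I_j$ and the initial point on $I_{j+1}$ lie over the same component $Y_\alpha$, is exactly the condition that the concatenation $\gamma_1 \cup \cdots \cup \gamma_N$ is a diffractive geodesic. Clause~(iii), combined with the observation about $\lambda$, fixes the total length to be $t$. The converse direction amounts to lifting each segment of a given diffractive geodesic of length $t$ to an interior integral curve of $\barZ$ and checking that the resulting piecewise curve satisfies (i)--(iii), which is immediate.

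For part (b), assume $p \gtilde p'$ via a \emph{continuous} curve $\gamma$ with interior and boundary pieces alternating. The interior pieces again project to geodesic segments in $X^\circ$ by (I). By (III), the limit point in $\Sbstar X$ of an incoming interior piece must lie in the critical submanifold; continuity of $\gamma$ then forces the subsequent boundary integral curve of $\barZ$ to start at that critical point and to end at the critical point from which the next interior piece departs. Any integral curve of $\barZ$ in $\Sbstar_{\pa X} X$ joining two points of the critical submanifold is necessarily a maximally extended boundary integral curve, which by (II) lifts an $h_0$-geodesic of length exactly $\pi$ in $\pa X$. This is precisely the extra condition upgrading the broken geodesic to a (partially) geometric geodesic. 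Conversely, a geometric geodesic of length $t$ produces a continuous alternating lift by inserting at each diffraction its associated length-$\pi$ boundary lift.

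The main obstacle is the identification used in the previous paragraph: that a continuous transition of $\gamma$ from an interior integral curve through the boundary stratum and back forces the intermediate boundary piece to be a full length-$\pi$ lift. This relies on the fine description of $\barZ$ near the critical submanifold given in \cite{Melrose-Wunsch1}, together with the radial source--sink structure of $\barZ$ at $\{x=0,\bar\eta = 0\}$ which ensures that interior integral curves approach cone points exactly through such critical points. Once this is in hand, both (a) and (b) reduce to unwinding the definitions and invoking (I)--(III).
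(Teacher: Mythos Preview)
Your argument is correct and matches the paper's approach: the paper gives no separate proof at all, simply stating ``Hence, we may conclude the following proposition'' after recording facts (I) and (II), so your proposal is a faithful expansion of exactly the reasoning the paper leaves implicit. One cosmetic issue: you have swapped the labels (a) and (b) --- in the proposition as stated, (a) concerns $\gtilde$ and geometric geodesics while (b) concerns $\dtilde$ and diffractive geodesics, whereas you treat the diffractive case under the heading ``part (a)'' and the geometric case under ``part (b).''
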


An important feature of these equivalence relations, proved in
\cite{Wunsch:Poisson}, is the following:
\begin{proposition}[\cite{Wunsch:Poisson}*{Prop.~4}]
  \label{proposition:closed}
  The sets
  \begin{equation*}
    \left\{(p,p',t) : p\gtilde p' \right\} \quad \text{and} \quad \left\{
      (p,p',t) : p\dtilde p' \right\}
  \end{equation*}
  are closed subsets of $\Sbstar X \times \Sbstar X \times \RR_+$.
  \label{prop:closed}
\end{proposition}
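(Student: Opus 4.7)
The plan is to show closedness by extracting, from a convergent sequence of broken geodesics realizing the relation, a limit broken geodesic realizing the relation for the limit triple. I will describe the argument for the diffractive set $\{(p,p',t) : p \dtilde p'\}$; the geometric case proceeds analogously. Suppose $(p_n, p_n', t_n) \to (p, p', t)$ in $\Sbstar X \times \Sbstar X \times \RR_+$, with each triple realized by a diffractive broken geodesic $\gamma_n = \gamma_n^{(1)} \cup \cdots \cup \gamma_n^{(N_n)}$.

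First I would bound the number of arcs $N_n$ uniformly. Each interior arc connecting cone points has length bounded below by some $\delta > 0$: this follows from the finiteness of the set of cone points, the positivity of the mutual distances between distinct boundary components, and a local analysis near a single cone point using the normal form $g = dx^2 + x^2 h(x,y,dy)$, which forces a loop at a cone point to travel outward by a definite amount before returning. Hence $N_n \le \lceil (t+1)/\delta \rceil + O(1)$. In the geometric setting, one similarly uses the length-$\pi$ rigidity of boundary arcs. Passing to a subsequence, I may assume $N_n = N$ is constant.

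Next I would apply compactness. For each $n$, record the initial point $q_n^{(j)} \in \Sbstar X$ and length $\ell_n^{(j)}$ of each $\gamma_n^{(j)}$. Since $\Sbstar X$ is compact and the lengths $\ell_n^{(j)}$ are bounded by $t_n$, pass to a subsequence where $q_n^{(j)} \to q^{(j)}$ and $\ell_n^{(j)} \to \ell^{(j)}$, with $\sum_j \ell^{(j)} = t$, $q^{(1)} = p$, and the endpoint of $\gamma_n^{(N)}$ converging to $p'$. Smooth dependence of integral curves of $\barZ$ on initial data then yields limit arcs $\gamma^{(j)}$ with the prescribed initial points and lengths. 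The matching condition (ii) of Definition~\ref{defn:relations}---that the endpoint of $\gamma^{(j)}$ and the initial point of $\gamma^{(j+1)}$ lie over the same boundary component $Y_\alpha$---is closed, because the set of components is finite and each is closed in $X$, so along the subsequence the relevant index is eventually constant and passes to the limit. This produces a diffractive broken geodesic of length $t$ from $p$ to $p'$.

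The main obstacle is controlling the $\barZ$-flow near the critical manifold $\{x = 0,\ \bar\eta = 0\} \subseteq \Sbstar X$, where $\barZ$ vanishes and where all diffraction points live. One must verify that the endpoint of $\gamma_n^{(j)}$ (on this manifold) converges to the endpoint of $\gamma^{(j)}$, and that the rescaled $\barZ$-parametrization relates continuously to the length-parametrization via $\lambda$ in the limit. This uses the explicit description of $\barZ$ in \eqref{spray} together with the analysis in \cite{Melrose-Wunsch1} showing that the interior integral curves of $\barZ$ reach the critical manifold cleanly in finite length parameter. A minor subtlety is that an extremal arc may degenerate to zero length if $p_n$ or $p_n'$ happens to approach a point over $\pa X$, in which case the limit is a broken geodesic with fewer effective arcs; this does not disrupt the argument, as the degenerate arc simply collapses to a single point lying over a cone point.
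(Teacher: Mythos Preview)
The paper does not give its own proof of this proposition: it is stated with a citation to \cite{Wunsch:Poisson}*{Prop.~4} and no argument is supplied here. So there is nothing in the present paper to compare your proposal against.

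That said, your outline is the natural compactness argument and is presumably close in spirit to the cited proof. The key steps you identify---a uniform lower bound on the length of interior arcs joining cone points (hence a uniform bound on the number of arcs), extraction of convergent subsequences of arc data by compactness of $\Sbstar X$, and passage to the limit using continuity of the $\barZ$-flow---are exactly what one expects. Your identification of the main technical obstacle is also correct: the vector field $\barZ$ vanishes on the critical manifold $\{x=0,\ \bar\eta=0\}$, so continuous dependence of the flow on initial data is not automatic there, and one must invoke the detailed analysis of the limiting behavior of integral curves from \cite{Melrose-Wunsch1}. Your lower-bound argument for arcs beginning and ending at the same cone point is right: since the only geodesics reaching $Y_\alpha$ are the radial curves $\{y=y_0\}$ in the collar, any such arc must exit the collar and re-enter, giving a definite positive lower bound on its length. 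One point you pass over quickly is the geometric case: there the continuity requirement and the alternation with boundary arcs of length $\pi$ in $(Y_\alpha, h_0)$ introduce an extra layer, and one must check that the limit of the concatenated interior and boundary arcs remains continuous at the junctions; this again uses the structure of $\barZ$ near the critical set.
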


We remark that based on pure dimensional considerations, closed geodesics that
involve geometric interactions should not generically exist, just as closed
geodesics on a smooth manifold passing through a fixed, finite set of marked
points are non-generic.\footnote{A simple way to demonstrate that a residual set
  of conic metrics admits no closed geodesics with geometric interactions at
  cone points is as follows.  Suppose we are given a conic manifold with no pair
  of cone points which are conjugate.  (Genericity of this situation should also
  be easy to demonstrate by perturbing in $X^\circ$.)  Now consider all
  geodesics of length less than $A$ both starting and ending at cone points; the
  endpoints of these at the boundary components then form a discrete (and hence
  finite) set $S_A$, by nonconjugacy.  Now we scale the boundary metric $h_0$
  while leaving the metric $g$ unchanged except in a small neighborhood of the
  boundary: for $\beta$ close to $1$ we set
  \begin{equation*}
    g_\beta = dx^2 + x^2 \left[\psi(x) \, \beta + (1-\psi(x)) \right] h(x)
  \end{equation*}
  where $\psi$ is supported in $[0, \ep)$ and equal to $1$ in
  $[0,\frac{\ep}{2}].$ Then for all but finitely many values of $\beta$ no pair
  of points in $S_A$ are exactly distance $\pi$ apart, hence no concatenation of
  any two is a geometric geodesic.  If $\ep$ is chosen small, we have not
  introduced any new geodesics of length less than $A$ that connect cone points,
  implying that we have killed off all geometric geodesics of length less than
  $A$ by this simple perturbation.} For this reason we focus our attention in
this paper on the contributions to the wave trace of closed diffractive
geodesics; they, together with closed geodesics in $X^\circ$, should typically
give the only singularities of the wave trace.

\subsection{Jacobi fields and Fermi normal coordinates}
\label{section:Jacobi}


In the course of our analysis, we will encounter two classes of Jacobi fields
along the geodesics $\gamma$ of $X$.  The first of these, the \emph{cone Jacobi
  fields}, are the solutions to the Jacobi equation with respect to the metric
$g$ which are sections of the \emph{cone tangent bundle}
\begin{equation}
  \label{eq:cone-tangent-bundle}
  \coneT X \defeq x^{-1} \cdot {}^\upb T X
\end{equation}
along $\gamma$.  The sections of this bundle, which we denote by
$\calV_{\text{cone}}(X)$, are spanned over $\calC^\infty(X)$ by the vector
fields $\del_x$ and $x^{-1} \cdot \del_{y_j}$ for $j = 1, \ldots, n-1$ near $Y$
and restrict to be smooth vector fields in the interior.  While these vector
fields $\del_x$ and $x^{-1} \, \del_{y_j}$ are singular at the boundary, we note
that they are the vector fields of unit length with respect to $g$ and thus,
from a Riemannian point of view, naturally extend the smooth vector fields from
the interior.  Indeed, the standard Riemannian objects such as the metric $g$,
the volume half-density $\omega_g$, and the curvature tensor $\text{Riem}$ are
smooth sections of bundles constructed from $\coneT X$ and its dual bundle,
$\coneT^* X = x \cdot \Tbstar X$.

The second class of Jacobi fields we encounter are the \emph{b-Jacobi fields},
introduced by Baskin and the second author in \cite{Baskin-Wunsch:Resolvent}.
They are the sections of the b-tangent bundle ${}^\upb T X$ along $\gamma$ which
satisfy the Jacobi equation with respect to $g$, i.e., they are the smooth
Jacobi fields which are tangent to $Y$.  In particular, if $\gamma$ connects two
different cone points, this definition entails tangency to both the starting and
ending components of the boundary.  We note that nonzero b-Jacobi fields are
necessarily normal to a geodesic:  in the boundary coordinates $(x,y)$, the
geodesics reaching the boundary are exactly the curves $\{y = y_0\}$.  The
smooth Jacobi fields along these geodesics are spanned by the coordinate vector
fields $\del_x$ and $\del_{y_j}$, and the constant linear combinations of these
which are tangent to the boundary are precisely the span of the $\del_{y_j}$'s.

The b-Jacobi fields with respect to a cone point should be regarded as the
analogue on a conic manifold of the ordinary Jacobi fields on a smooth manifold
that vanish at a given point---they are precisely the cone Jacobi fields on $X$
which vanish simply \emph{as a cone vector field} at the cone point.  Since the
Jacobi equation is second order, these cone vector fields are specified by their
derivatives at this initial point, and thus the corresponding b-Jacobi fields
are specified by the point in the boundary component $Y_\alpha$ from which they
emanate, owing to the uniqueness of the geodesic striking that point in the
boundary.  This measures an ``angle of approach'' to the cone point when viewed
metrically, i.e., in the blown-down picture shown in Figure
\ref{fig:mfld-conic-sings-metric}.  As in \cite{Baskin-Wunsch:Resolvent}, we use
these Jacobi fields to define when the endpoints of geodesics emanating from a
cone point are conjugate.

\begin{definition}[\cite{Baskin-Wunsch:Resolvent}*{Section 2.4 and Appendix A}]
  \mbox{}
  \begin{enumerate}[(a)]
  \item We say that a point $p \in X^\circ$ is \emph{conjugate} to a cone point
    $Y_\alpha$ along a geodesic $\gamma$ if there exists a nonzero cone Jacobi
    field $\vec{J} \in \calV_{\text{cone}}(\gamma)$ along $\gamma$ which
    vanishes at both $p$ and $Y_\alpha$. Equivalently, $p$ is conjugate to
    $Y_\alpha$ along $\gamma$ if there exists a nonzero b-Jacobi field
    ${}^\upb \vec{J}$ along $\gamma$ vanishing at $p$.
  \item We say two cone points $Y_\alpha$ and $Y_\beta$ are \emph{conjugate}
    along a geodesic $\gamma$ if there exists a nonzero cone Jacobi field
    $\vec{J} \in \calV_{\text{cone}}(\gamma)$ along $\gamma$ vanishing at both
    $Y_\alpha$ and $Y_\beta$.  Equivalently, they are conjugate along $\gamma$
    if there exists a nonzero b-Jacobi field ${}^\upb \vec{J}$ along $\gamma$.
  \end{enumerate}
\end{definition}

Out of these Jacobi fields we build the corresponding classes of \emph{Jacobi
  endomorphisms} (cf.~\cite{KowVan}).  These are the smooth sections $\bsfJ$ of
the bundles
\begin{equation*}
  \End \! \left( \coneT X \right) \quad \text{and} \quad \End \! \left( {}^\upb
    TX \right)
\end{equation*}
along a geodesic $\gamma : [0,T]_t \To X$ satisfying the analogous Jacobi
equation
\begin{equation*}
  \skew{3}{\ddot}{\bsfJ}(t) + \bm{\mathsf{Riem}}(t) \circ \bsfJ(t) = \vec{0} ,
\end{equation*}
where $\bm{\mathsf{Riem}} \in \End \! \left( \coneT X \right) \big\vert_\gamma$
is the endomorphism defined by applying the Riemann curvature tensor to the
tangent vector $\dot{\gamma}(t)$.  (In other words, Jacobi endomorphisms are
simply matrices of Jacobi fields.)  We only use a few facts about these
endomorphisms in our work.  The first, which is a consequence of a simple
calculation using the connection, is that the Wronskian
\begin{equation}
  \label{eq:Jacobi-endo-Wronskian}
  \calW(\bm{\mathsf{X}}, \bm{\mathsf{Y}}) \defeq
  \dot{\bm{\mathsf{X}}} {}^\upt (t) \cdot \bm{\mathsf{Y}}(t) -
  \bm{\mathsf{X}}{}^\upt(t) \cdot \dot{\bm{\mathsf{Y}}}(t)
\end{equation}
of any two solutions to this equation is constant, where
$\bm{\mathsf{X}}^\upt(t)$ is the adjoint endomorphism determined by the metric.
The second is the standard orthogonal decomposition into the tangential and
normal blocks of the endomorphism coming from the Gauss lemma:
\begin{equation*}
  \bsfJ(t) = \bsfJ^\parallel(t) \oplus \bsfJ^\perp(t).
\end{equation*}
Here, $\bsfJ^\parallel(t) : T_{\gamma(t)}\gamma \To T_{\gamma(t)}\gamma$ and
$\bsfJ^\perp(t) : N_{\gamma(t)}\gamma \To N_{\gamma(t)}\gamma$.  We note that
the constancy of the Wronskian for the whole Jacobi endomorphisms descends to
constancy of the Wronskians of the tangential and normal endomorphisms as well.

We will often use \emph{Fermi normal coordinates} $(\nu,\ell)$ along the
geodesics $\gamma$ of $X$.  These are defined by choosing a basepoint
$z_0 = \gamma(t_0)$ and a $g$-orthonormal basis
$\left( \del_\nu , \del_\ell \right)$ of $\coneT_{z_0} X$ with
$\del_\ell = \dot\gamma(t_0)$ the tangent vector to the geodesic at the
basepoint.  Extending this basis to all of $\gamma$ by parallel transport and
exponentiating then yields the Fermi normal coordinates $(\nu, \ell)$ on a small
tube around the geodesic.  The orthonormal basis $(\del_\nu, \del_\ell)$ at the
basepoint generally will be the initial data for a set of Jacobi fields
$\left( \vec{J}_1, \ldots, \vec{J}_n \right)$ or, equivalently, a Jacobi
endomorphism $\bsfJ$.  Indeed, we will primarily deal with the Jacobi
endomorphism $\bsfJ(t)$ along a geodesic $\gamma(t)$ whose initial data is
\begin{equation*}
  \bsfJ(0) = \vec{0} \quad \text{and} \quad \skew{3}{\dot}{\bsfJ}(0) = \Id .
\end{equation*}
Using Fermi normal coordinates along $\gamma$, we may show this endomorphism is
computable by the derivative of the exponential map at $\gamma(0)$:
\begin{equation}
  \label{eq:Jacobi-endo-exp}
  \bsfJ(t) \cdot \left( a \cdot \del_\nu + b \, \del_\ell \right) = \left. D
    \exp_{\gamma(0)}(-) \right|_{t \dot\gamma(0)} \cdot t \left( a \cdot
    \del_\nu + b \, \del_\ell \right) .
\end{equation}
In particular, we have the identification
\begin{equation}
  \label{eq:Jacobi-endo-exp-2}
  t^{-1} \cdot \bsfJ(t) \cdot \left( a \cdot \del_\nu + b \, \del_\ell \right) =
  \left. D \exp_{\gamma(0)}(-) \right|_{t \dot\gamma(0)} \cdot \left( a \cdot
    \del_\nu + b \, \del_\ell \right) .
\end{equation}

Note that the map $\exp_p(v),$ defined on $(x,v) \in TX^\circ,$ does \emph{not}
extend smoothly to the boundary, since the only geodesics that reach
the boundary have tangents that are multiples of $\pa_x$ (and the same
issue arises even if we view the bicharacteristic flow in the b-cotangent
bundle: bicharacteristics only limit to $x=0$ from the interior at the
radial points of the flow, which are at $\RR dx/x + 0
\cdot dy$---see \cite{Melrose-Wunsch1} for details).  However, we can
still define a flowout map from the boundary: employing a variant on the notation
of \cite{Melrose-Vasy-Wunsch1}, we denote the ``flowout map''
\begin{align*}
\F: [0,x_*) \times Y&\To X\\
\F: (\ell, y) &\longmapsto (x=\ell,y).
\end{align*}
We view this as time-$\ell$ flow along a geodesic (described here in
our product coordinates), however we can just as easily view it as the
``identity map'' that identifies a piece of a \emph{model cone}
$$
X_0 \equiv [0, x_*) \times Y
$$
with a neighborhood of $\pa
X$ in $X.$

Now we may take the b-differential of $\F$ as in
\cite{Melrose-Conormal} to obtain
$$
{}^{\upb}\F_*: {}^{\upb} T X_0 \To {}^{\upb} TX.
$$
Since multiplication by $x^{-1}$ identifies b- and cone-tangent
spaces, this also gives a map
$$
{}^{\text{cone}}\F_*: \coneT X_0 \To \coneT X.
$$
Likewise, we have dual maps
$$
{}^{\upb}\F^*: {}^{\upb} T^* X \To {}^{\upb} T^*X_0,
$$
$$
{}^{\text{cone}}\F^*: {}^{\text{cone}} T^* X \To {}^{\text{cone}} T^*X_0.
$$

Now equip $X_0$ with the ``product metric''
$$
g_0 = dx^2 + x^2 \, h(0,y,dy).
$$
As remarked above
$\omega_g(x,y) = \left| \det h(y) \right|^{\frac{1}{4}} \left| dx \wedge x \,
  dy_1 \wedge \cdots \wedge x \, dy_{n-1} \right|^{\frac{1}{2}}$
is naturally a smooth section of the vector bundle
$\left| \bigwedge^n \! \left[ \coneT^* X \right] \right|^{\frac{1}{2}}$, the
\emph{cone half-densities} on $X$; likewise $X_0$ has its own
metric half-density $\omega_{g_0}$ in its cone half-density bundle.  The map
${}^{\text{cone}}\F^*$ naturally induces isomorphisms of the density and
half-density bundles as well.


We now set
\begin{equation}
  \label{eq:cone-Theta-iso}
  \begin{aligned}
    \Theta(Y_\alpha \to p_0) &\defeq  \left|
\det_g {}^{\text{cone}}\F_* \big|_{(x_0,y_0)}\right| \\
    \Theta(p_0 \to Y_\alpha) &\defeq
 \left|
\det_g ({}^{\text{cone}}\F_*)^{-1}\big|_{(x_0,y_0)}\right|;
  \end{aligned}
\end{equation}
the metric determinant is taken with respect to an orthonormal basis for each
space with respect to the metrics on $X_0$ and $X.$

Since the dual mapping ${}^{\text{cone}}\F^*$ is related to
${}^{\text{cone}}\F_*$ by taking the adjoint matrix in the dual basis,
we easily see that
$$
{}^{\text{cone}}\F^* \omega_g^2(p_0)=\Theta(Y_\alpha\to p_0)\,  \omega_{g_0}^2(p_0),
$$
i.e., if we abuse notation by identifying $X_0$ and $X$ via the map
${}^{\text{cone} }\F,$ we may simply write:
\begin{equation}\label{nonproductmetric}
\omega_{g_0}(p_0) = \Theta^{-1/2}(Y_\alpha\to p_0)\,  \omega_g(p_0).
\end{equation}

We now work somewhat more globally.  To begin, note
that for most values of $y_0,$ \eqref{eq:cone-Theta-iso} makes equally good sense for any $x_0
\in \RR,$ where we are interpreting $\F$ as the geodesic flow along
the geodesic $\gamma$ emanating from $(0,y_0) \in Y_\alpha,$ a given component
of $Y;$ hence the definition extends to map
$$
[0,\infty)_x \times Y \To X
$$
which, however, fails to be defined beyond some maximal value of $x$ along
the geodesics that hit other cone points.
  The fact that
$$
{}^\upb\F_* \pa_{y_j} =\pa_{y_j}
$$
in our special coordinate system near the cone point easily
generalizes to show that ${}^{\upb}\F_*$ maps the tangent space of $Y_\alpha$
to b-Jacobi fields.  Thus, as above we may let
$\bsfJ(t)$
denote the basis of b-Jacobi fields along a flowout geodesic $\gamma$ with $\bsfJ(0)$ equal to an
orthonormal basis of $TY_\alpha.$  Then if $p_0=\F(x_0,y_0),$ we simply have
$$
\Theta(Y_\alpha\to p_0)=x_0^{-(n-1)} \lvert \det_g \bsfJ(x_0) \rvert,
$$
with the factor of $x_0$ arising in the transition from b- to
cone-tangent bundles; note that $x_0$ is of course the distance
traveled along the geodesic from $Y_\alpha$ to $p_0.$

Returning to our discussion of interior points above, we may
reinterpret this definition as follows: In the special case where
$Y_\alpha$ is a ``fictional'' cone point obtained by blowing up a
point $z_0$ on a smooth manifold, these b-Jacobi fields correspond to
Jacobi fields on the original manifold that vanish at $z,$ and it is
easily seen that we may reinterpret $\Theta$ as the determinant of the
differential of the exponential map $\exp_z(-)$ (cf.\ \cite{Ber}, p.271).

Now suppose that $(0,y_0) \in Y_\alpha,$ a given boundary component, and that there 
exists $x_0$ such that
$$
\F(x_0,y_0) \in Y_\beta,
$$
a different boundary component.  We may define\footnote{As before, we
  omit from the notation the choice of which geodesic we are using to
  connect $Y_\alpha$ and $Y_\beta,$ although this certainly matters.
  Note that the choice is from a discrete set, owing to our
  non-conjugacy hypotheses.}
\begin{equation}\label{ThetaYY}
\Theta(Y_\alpha \to Y_\beta) \defeq  x_0^{-(n-1)}
\left\lvert
\det_g \bsfJ(x_0) \right\rvert
\end{equation}
just as at interior points, where we simply note that while the Jacobi
fields in $\bsfJ$ are generically \emph{singular} at $Y_\beta$ owing
to the singularity of the Jacobi equation, they are precisely sections
of $\coneT X$ there, hence the metric is well-defined and the
resulting quantity is finite.

The quantity $\Theta(Y_\alpha \to Y_\beta) $ is a measure of the
convexity of the flowout manifold $\F(-,Y_\alpha)$---analogous to a geodesic
sphere in the smooth manifold case---as it reaches $Y_\beta.$ The
manipulations involved in the proof of
Theorems~\ref{thm:interior-ampl} and \ref{thm:amplitude-mult-diff}
will allow another interpretation, namely as the Hessian of the
difference of shape operators of the flowouts of $Y_\alpha$ and
$Y_\beta$ respectively at a point $p_0$ along a geodesic $\gamma$
connecting them, rescaled by certain half-density factors; this
product arises via a Wronskian of the two sets of b-Jacobi fields, one
set coming from each of the two cone points, and can be seen to be a
measure of the degree of tangency of the two flowouts at $p_0$; it is
intriguing that the result is independent of the choice of $p_0$ along
$\gamma,$ but we will not dwell on this issue here.

\section{Single diffraction on a product cone}
\label{section:singleproduct}

In this section we review the results of Cheeger and Taylor on the symbol of the
diffracted wavefront on a product cone \cites{Cheeger-Taylor1, Cheeger-Taylor2}.
Note that in our work the space dimension is denoted $n$, while this dimension
is denoted $m+1$ in these references.

Suppose $Y$ is a connected, closed manifold of dimension $n-1$ (such as a single
component $Y_\alpha$ of the boundary appearing in the previous
discussion).  As above, let
$X_0$ denote the ``product cone'' over $Y$, the noncompact cylinder
$[0, \infty) \times Y$ equipped with the scale-invariant metric
\begin{equation*}
  g_0 = dx^2+ x^2 \, h_0(y,dy)
\end{equation*}
(we have slightly changed notation so that $X_0$ has an infinite
end, but our considerations are all local in any case).
Here, $h_0$ is a Riemannian metric on $Y$ (e.g.,
$h_\alpha \defeq h \big\vert_{Y_\alpha}$ from the previous section).  Let
$\Lap_0$ denote the Laplace-Beltrami operator acting on half-densities on $X_0$,
and let $\Box_0$ denote the d'Alembertian $D_t^2-\Lap_0$ on the half-densities
of the associated spacetime $\bbR \times X_0$.  Following Cheeger and Taylor, we
define a shifted square-root of the Laplacian
\begin{equation*}
  \nu \defeq \sqrt{ \Delta_{h_0} + \left( \frac{ 2 - n}{2} \right)^2 } ,
\end{equation*}
determined in the functional calculus of $\Delta_{h_0}$ on $Y$.  For a function
$f \in L^\infty(\bbR)$, we let $\calK[f(\nu)](y,y')$ (or sometimes simply
$f(\nu)$) denote the Schwartz kernel of the corresponding element of the
functional calculus.

Having set up the framework, we now state a mild reinterpretation of the results
of Cheeger and Taylor calculating the asymptotics of the sine propagator on
$\bbR \times X_0$,
\begin{equation*}
  \vec{W}_0(t) \defeq \frac{\sin \! \big(t \sqrt{ \smash[b]{\Delta_0}} \big) }{
    \sqrt{ \smash[b]{\Delta_0}} }.
\end{equation*}
In what follows, we let $u \defeq (x+x')-t$ denote the defining function for the
diffracted wavefront, and we write $N^*\{x + x' = t\}$ for its conormal bundle.

\begin{proposition}
  \label{proposition:sinproduct}
  Suppose $p = (x,y)$ and $p' = (x',y')$ are strictly diffractively
  related\footnote{Note that it is only possible for a geodesic to undergo one
    diffraction on $X_0$.} points in $X_0^\circ$, i.e.,
  \begin{equation*}
    p \dtilde p' \quad \text{and} \quad p \stackbin[{\bsfG}]{t}{\not\sim} p'
    .
  \end{equation*}
  Then near $(t,p,p') \in \bbR \times X^\circ_0 \times X^\circ_0$, the Schwartz
  kernel $\bmE_0$ of the sine propagator $\vec{W}_0(t)$ lies locally in the
  space of Lagrangian distributions\footnote{See
    Appendix~\ref{sec:lagrangian-dists-amplitudes} for a definition of
    Lagrangian distributions.}
  \begin{equation*}
    I^{-\frac{5}{4} - \frac{n-1}{2}} \! \left( \bbR \times X_0^\circ \times
      X_0^\circ, N^*\{ x + x' = t\} ; |\Omega|^\frac{1}{2}(X^\circ_0 \times
      X^\circ_0) \right) . 
  \end{equation*}
  In particular, $\bmE_0$ has an oscillatory integral representation
  \begin{equation}
    \bmE_0(t,x,y;x',y') = \int_{\bbR_\xi} e^{i (x + x' - t) \cdot
      \xi} \, e(t,x,y;x',y';\xi) \, d\xi ,
  \end{equation}
  where the amplitude has the leading order behavior
  \begin{equation*}
    e(t,x,y;x',y';\xi) \equiv \calE(x,y;x',y';\xi) \left( \mathrm{mod} \
      S^{-\frac{3}{2}} \! \left( \bbR \times X_0^\circ \times X_0^\circ ;
        \HD(X_0^\circ \times X_0^\circ) \right) \right)
  \end{equation*}
  for
  \begin{multline}
    \label{sinsymbolofonediffraction}
    \calE(x,y;x',y';\xi) \defeq \frac{(xx')^{-\frac{n-1}{2}}}{2\pi} \,
    \frac{\chi(\xi)}{2|\xi|} \\
    \mbox{} \times \left[ H(\xi) \, \calK[e^{-i\pi\nu}](y,y') + H(-\xi) \,
      \calK[e^{i \pi \nu}](y,y') \right] \omega_{g_0}(x,y) \,
    \omega_{g_0}(x',y').
  \end{multline}
  Here, $\cutoff \in \calC^\infty(\bbR_\xi)$ is a smooth function satisfying
  $\cutoff \equiv 1$ for $|\xi| > 2$ and $\cutoff \equiv 0$ for $|\xi| < 1$, and $H$
  is the Heaviside function.
\end{proposition}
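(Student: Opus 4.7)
The plan is to realize the Schwartz kernel of $\vec{W}_0(t)$ via the explicit Cheeger--Taylor functional-calculus representation on the product cone \cites{Cheeger-Taylor1,Cheeger-Taylor2}, repackage the diffractive piece as an oscillatory integral conormal to $\{x+x'=t\}$, and read off the principal symbol. Separation of variables for $\Delta_0$ on $X_0 = [0,\infty)_x \times Y$ in the half-density normalization \eqref{metrichalfdensity} converts $\Delta_0$ into a Bessel operator on the ray of order determined by $\nu$. Letting $\{\phi_j\}$ be an $L^2(Y)$-orthonormal eigenbasis of $\Delta_{h_0}$ with eigenvalues $\mu_j^2$ and setting $\nu_j = \sqrt{\mu_j^2+((n-2)/2)^2}$, the functional calculus of $\Delta_0$ yields
\begin{equation*}
  \vec{W}_0(t)(x,y;x',y') = (xx')^{-\frac{n-1}{2}} \sum_j \phi_j(y)\,\overline{\phi_j(y')} \int_0^\infty J_{\nu_j}(x\lambda)\, J_{\nu_j}(x'\lambda)\, \sin(t\lambda)\, d\lambda,
\end{equation*}
where the $(xx')^{-(n-1)/2}$ arises from the half-density normalization.

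Next I would insert the large-argument Bessel asymptotic $J_\mu(z) = (2\pi z)^{-1/2}\bigl[e^{i(z - \mu\pi/2 - \pi/4)} + e^{-i(z - \mu\pi/2 - \pi/4)}\bigr] + O(z^{-3/2})$ into the product $J_{\nu_j}(x\lambda)J_{\nu_j}(x'\lambda)$, producing four terms organized by the possible phases $\pm(x\pm x')\lambda$. The two \emph{geometric} terms with phase $\pm(x-x')\lambda$ yield contributions microsupported along the direct front $\{|x-x'|=t\}$, which is disjoint from the region $\{x+x'=t\}$ under consideration and hence contributes only smoothly. The two \emph{diffractive} terms with phase $\pm(x+x')\lambda$ carry the factor $e^{\mp i\pi\nu_j}$; summing over $j$ against $\phi_j(y)\overline{\phi_j(y')}$ reconstructs the Schwartz kernels $\calK[e^{\mp i\pi\nu}](y,y')$ by the definition of the functional calculus of $\nu$ on $Y$.

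Third, I would execute the radial integral using $\sin(t\lambda) = (e^{it\lambda}-e^{-it\lambda})/(2i)$ and substituting $\xi = \pm\lambda$, so that the combined exponential becomes $e^{i(x+x'-t)\xi}$. The Bessel prefactors produce $1/(2\pi\sqrt{xx'}\,|\xi|)$, and division by $2i$ from $\sin$ yields $1/(2i|\xi|)$. Multiplying by the half-density factors $\omega_{g_0}(x,y)\,\omega_{g_0}(x',y')$ and absorbing the overall $(xx')^{-(n-1)/2}$ reproduces the amplitude $\calE$ displayed in \eqref{sinsymbolofonediffraction}; splitting by the sign of $\xi$ produces the two terms $H(\pm\xi)$ selecting $e^{\mp i\pi\nu}$ as appropriate. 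Inserting the cutoff $\cutoff(\xi)$ excises the low-frequency range where the Bessel asymptotic degenerates, which contributes only to a smooth remainder, while the subleading $O(z^{-3/2})$ Bessel correction contributes one additional order of $|\xi|^{-1}$ decay, placing it in $S^{-3/2}$.

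The main obstacle is uniform control of the Bessel error in the angular spectral variable $\nu_j$, so that the subleading remainders assemble into a genuine symbol of class $S^{-3/2}$ with values in Schwartz kernels on $Y\times Y$. The standard Bessel remainder is only uniform after division by a power of $\mu$, so one needs Weyl-type bounds on the spectral projectors $\sum_{\nu_j\le N}\phi_j\otimes\overline{\phi_j}$ combined with careful bookkeeping of powers of $\nu_j$ versus $\lambda$. Secondarily, one must check that the strictly diffractive hypothesis (that $p$ and $p'$ are diffractively but not geometrically related) genuinely ensures $\calK[e^{\pm i\pi\nu}](y,y')$ is a smooth function of $(y,y')$ on our region; this follows from standard propagation of singularities for the half-Klein--Gordon operator on $Y$, because the hypothesis forces $d_{h_0}(y,y') \ne \pi$ and hence places $(y,y')$ off the wavefront set of $e^{\pm i\pi\nu}$.
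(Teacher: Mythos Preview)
Your approach is sound in outline but takes a genuinely different route from the paper. The paper does \emph{not} go back to the Bessel integral representation and redo the large-argument asymptotics mode by mode. Instead it simply quotes Cheeger--Taylor's Theorems~5.1 and~5.3 \cite{Cheeger-Taylor2}, which already give a complete asymptotic expansion of $\bmE_0$ in the defining function $\delta = \sgn(t^2-(x+x')^2)\,|(t^2-(x+x')^2)/(xx')|^{1/2}$ of the form $\sum a_j \delta^j + \sum b_k \delta^{2k}\log|\delta|$, with explicit leading coefficients involving $\cos(\pi\nu)$ and $\sin(\pi\nu)$. The paper then converts $\delta$ to $u=x+x'-t$, writes the leading singularity $-\tfrac{\pi}{2}\sin(\pi\nu)H(-u)-\tfrac12\cos(\pi\nu)\log|u|$ as an oscillatory integral via the standard Fourier representations of $H(-u)$ and $\log|u|$, and reads off the amplitude. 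This is a short repackaging argument; the hard analysis is entirely outsourced to \cite{Cheeger-Taylor2}.

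The main advantage of the paper's route is that it completely sidesteps the obstacle you correctly identify: uniform control of the Bessel remainder in the spectral parameter $\nu_j$. Cheeger--Taylor obtain their expansion not by naive term-by-term Bessel asymptotics but via contour deformation and Lipschitz--Hankel integral identities, which is precisely what makes the summation over $j$ converge to smooth kernels $\calK[e^{\pm i\pi\nu}]$ with a genuine symbolic remainder. Your proposed direct expansion would need to reconstruct a substantial portion of that argument; the Weyl-projector bookkeeping you suggest is the right instinct, but it is considerably more work than quoting the expansion. Your approach does have the merit of being conceptually transparent about \emph{why} the $e^{\mp i\pi\nu}$ factors appear (they come straight from the Bessel phase), whereas in the paper's version this is buried inside the cited result. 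A minor point: your Bessel leading term is off by a factor of~$2$ (the asymptotic is $\sqrt{2/(\pi z)}\cos(\cdots)$, not $(2\pi z)^{-1/2}\cdot 2\cos(\cdots)$ as your exponential form implies), and you have a stray factor of $i$ in the final amplitude that does not appear in \eqref{sinsymbolofonediffraction}; both are easily fixed but signal that the constants need more care.
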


\begin{remark}
  We call attention to a few aspects of this proposition.
  \begin{enumerate}[(a)]
  \item The sine propagator has order $-\frac{5}{4} - \frac{n-1}{2}$ as a
    Lagrangian distribution, showing the diffractive improvement of
    $\frac{n-1}{2}$ derivatives.
  \item The principal part of the amplitude, $\calE(x,y';x',y';\xi)$, is only
    given here modulo symbolic half-densities of $\frac{1}{2}$-order lower
    rather than those a full order lower.
  \item The operators $e^{i \pi \nu}$ and $e^{-i \pi \nu}$ are half-Klein-Gordon
    propagators for time-$\pi$, and hence their Schwartz kernels have singular
    support only at distance $\pi$ from the diagonal.
  \end{enumerate}
\end{remark}

\begin{proof}[Proof of Proposition \ref{proposition:sinproduct}]
  Varying slightly upon Cheeger and Taylor, let $\delta$ be the defining
  function
  \begin{equation*}
    \delta \defeq \sgn\!\left(t^2-(x+x')^2\right) \cdot \left| \frac{
        t^2-(x+x')^2}{xx'} \right|^{\frac{1}{2}}. 
  \end{equation*}
  From Theorems 5.1 and 5.3 of \cite{Cheeger-Taylor2}, we know that uniformly
  away from the set $\left\{ \dist_{h_0}(y,y') = \pi \right\}$ there is a
  complete asymptotic expansion of $\bmE_0$ of the form
  \begin{equation*}
    \bmE_0(t,x,y;x',y') \sim \sum_{j = 0}^\infty a_j(t,x,y;x',y')
    \, \delta^j + \sum_{k = 0}^\infty b_k(t,x,y;x',y') \, \delta^{2k}
    \log|\delta|
  \end{equation*}
  as $|\delta| \to 0$, where the leading terms are\footnote{In fact the
    expansions given here differ slightly from those stated in Theorems~5.1 and
    5.3 of \cite{Cheeger-Taylor2}.  In equation (5.20) of \cite{Cheeger-Taylor2}
    the factor of $1/(4-u^2)^{1/2}$ at the end of the first line should in fact
    read $1/(1-u^2/4)^{1/2},$ leading to the replacement of the factor
    $\log\sqrt{2/\delta}$ in the statement of these theorems with
    $\log (2\sqrt 2/\delta).$}
  \begin{equation*}
    \begin{aligned}
      a_0 &= \frac{1}{\pi (xx')^{\frac{n-1}{2}}} \left[ \log(2 \sqrt{2})
        \cos(\pi\nu) \phantom{ + \int_{s = 0}^\pi \frac{\cos(s\nu) -
            \cos(\pi\nu)}{2 \cos
            \! \left( \frac{s}{2} \right)} \, ds} \right. \\
      &\hspace*{1.3in} \left. \mbox{}+ \int_{s = 0}^\pi \frac{\cos(s\nu) -
          \cos(\pi\nu)}{2 \cos \! \left( \frac{s}{2} \right)} \, ds - H(-\delta)
        \, \frac{\pi}{2}
        \sin(\pi\nu) \right] \\
      b_0 &= - \frac{1}{\pi (xx')^{\frac{n-1}{2}}} \cos(\pi\nu) .
    \end{aligned}
  \end{equation*}
  (This may be thought of as a $\frac{1}{2}$-step quasipolyhomogeneous expansion
  of the kernel of $\vec{W}_0(t)$ in distributions associated to
  $\{ \delta = 0 \}$.)  The existence of this expansion establishes the
  Lagrangian structure of $\bmE_0$ in this region, and moreover the leading
  order singularity of $\bmE_0$ at the diffractive front is
  \begin{equation}
    \label{eq:sine-prop-leading-order-singularity}
    \frac{1}{\pi (xx')^{\frac{n-1}{2}}} \left[ - \frac{\pi}{2}
      \sin(\pi\nu) H(-\delta) - \cos(\pi\nu) \log|\delta| \right] .
  \end{equation}
  Note that the Schwartz kernels of the propagators $\cos(\pi\nu)$ and
  $\sin(\pi\nu)$ are in fact smooth since we are localized away from the
  submanifold $\left\{ \dist_{h_0}(y,y') = \pi \right\}$.

  We now convert this expansion in $\delta$ to one in our defining function
  $u \defeq x + x' - t$ using the relation
  \begin{equation*}
    \delta \sim \sgn(u) \left| \frac{2t}{x x'} \right|^\frac{1}{2}
    |u|^\frac{1}{2} \text{ as $|\delta| \to 0$} .
  \end{equation*}
  Thus \eqref{eq:sine-prop-leading-order-singularity} becomes
  \begin{equation}
    \label{eq:sine-prop-leading-order-singularity-redux}
    \frac{1}{\pi (xx')^{\frac{n-1}{2}}} \left[ - \frac{\pi}{2}
      \sin(\pi\nu) H(-u) - \frac{1}{2} \cos(\pi\nu) \log|u| \right]
  \end{equation}
  modulo a smooth function.  Using the oscillatory integral representations
  \begin{equation*}
    H(-u) = \int_{\bbR_\xi} e^{i u \xi} \, \frac{i}{2\pi}  \frac{1}{\xi + i 0}
    \, d\xi \quad \text{and} \quad \log|u| = \int_{\bbR_\xi} e^{i u \xi} \left[
      - \frac{1}{2 |\xi|} - \gamma \, \delta(\xi) \right] d\xi
  \end{equation*}
  where $\gamma$ is the Euler-Mascheroni constant, we may represent the leading
  order singularity of \eqref{eq:sine-prop-leading-order-singularity-redux} as
  the oscillatory integral
  \begin{equation*}
    \frac{1}{2\pi \left(x x'\right)^{\frac{n-1}{2}}} \int_{\bbR_\xi} e^{i u \xi}
    \left[ \frac{1}{2|\xi|} \cos(\pi \nu) + \gamma \, \delta(\xi) \, \cos(\pi
      \nu) - \frac{i}{2} \frac{1}{\xi + i 0} \sin(\pi \nu) \right] d\xi .
  \end{equation*}
  The singularities at $\xi = 0$ in this expression are superfluous since we are
  only interested in the large-$|\xi|$ behavior of this function.  Thus, this
  distribution is equivalent (up to introducing a smooth error) to
  \begin{equation*}
    \frac{1}{2\pi \left(x x'\right)^{\frac{n-1}{2}}} \int_{\bbR_\xi} e^{i u \xi}
    \, \cutoff(\xi) 
    \left[ \frac{1}{2|\xi|} \cos(\pi \nu) - \frac{i}{2\xi} \sin(\pi \nu) \right]
    d\xi .  
  \end{equation*}
  where $\cutoff \in \calC^\infty(\bbR_\xi)$ is a smooth function satisfying
  $\cutoff \equiv 1$ for $|\xi| > 2$, say, and $\cutoff \equiv 0$ for $|\xi| < 1$.  To
  make this expression more intuitive, we replace the sine and cosine
  Klein-Gordon propagators on the link with their half-wave counterparts,
  yielding
  \begin{equation*}
    \frac{1}{2\pi \left(x x'\right)^{\frac{n-1}{2}}} \int_{\bbR_\xi} e^{i u \xi}
    \, \cutoff(\xi) \, \frac{H(\xi) \, e^{- i \pi \nu} + H(-\xi) \, e^{i \pi
        \nu}}{2 |\xi|} d\xi ,
  \end{equation*}
  and therefore the principal amplitude of this distribution is
  \begin{equation}
    \label{eq:symbol-sine-prop-scalars}
    \frac{\left( x x' \right)^{-\frac{n-1}{2}}}{2 \pi} \cdot \cutoff(\xi) \,
    \frac{H(\xi) \cdot e^{- i \pi \nu} + H(-\xi) \, e^{i \pi \nu}}{2 |\xi|} .
  \end{equation}

  As we have calculated it, this is the principal amplitude of the propagator
  acting on scalars, and thus it is missing a right density factor, i.e.,
  $\bmE_0 \, \omega_g^2(x',y')$ is the (right-density) kernel of the operator on
  scalars. Letting ${}^\frac{1}{2} \bmE_0$ denote the kernel acting on
  half-densities, we calculate:
  \begin{equation*}
    {}^\frac{1}{2} \bmE_0 = \left[ \bmE_0 \,
      \omega_g^2(x',y') \right] \omega_g(x,y) \, \omega_g^{-1}(x',y') = \bmE_0
    \, \omega_g(x,y) \, \omega_g(x',y'),
  \end{equation*}
  yielding the expression \eqref{sinsymbolofonediffraction}.
\end{proof}

We now use the Lagrangian structure of $\bmE_0$ near the diffractive front to
conclude the analogous structure for the Schwartz kernel of the (forward)
half-wave propagator $\calU_0(t) \defeq e^{-it \sqrt{ \smash[b]{\Delta_0} } }$
on $X_0$.

\begin{corollary}
  \label{cor:onediffraction}
  Suppose $p = (x,y)$ and $p'=(x',y')$ are strictly diffractively related points
  in $X_0^\circ$ as above.  Then near
  $(t,p,p') \in \bbR \times X_0^\circ \times X_0^\circ$, the Schwartz kernel
  $\bmU_0$ of the half-wave propagator lies locally in the Lagrangian
  distributions:
  \begin{equation*}
    \bmU_0 \in I^{-\frac{1}{4} - \frac{n-1}{2}}\! \left(\bbR \times X_0^\circ \times
      X_0^\circ, N^* \{ t = x + x'\} ; |\Omega|^\frac{1}{2} (X_0^\circ \times
      X_0^\circ) \right) . 
  \end{equation*}
  Using the phase function $\phi(t,x,x',\xi) = (x + x' - t) \cdot \xi$, its
  principal amplitude is
  \begin{equation}
    \label{symbolofonediffraction}
    \calD(x,y';x',y';\xi) \defeq \frac{(xx')^{-\frac{n-1}{2}}}{2\pi i} \,
    \cutoff(\xi) \, 
    \calK[e^{-i \pi \nu}](y,y') \, \omega_{g_0}(x,y) \, \omega_{g_0}(x',y') ,
  \end{equation}
  modulo elements of $S^{-\frac{1}{2}}$.  Here,
  $\cutoff \in \calC^\infty(\bbR_\xi)$ is a smooth function satisfying
  $\cutoff \equiv 1$ for $\xi > 2$ and $\cutoff \equiv 0$ for $\xi < 1$.
\end{corollary}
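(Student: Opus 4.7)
The plan is to derive the half-wave propagator from the sine propagator using the elementary identity
\begin{equation*}
  e^{-it\rtLap_0} = \cos\!\bigl(t\rtLap_0\bigr) - i\sin\!\bigl(t\rtLap_0\bigr) = \frac{d}{dt}\vec{W}_0(t) \;-\; i\rtLap_0\,\vec{W}_0(t),
\end{equation*}
which expresses $\bmU_0$ in terms of first-order operators applied to $\bmE_0$ (in $t$ and in the left spatial variable respectively). Since we work in a neighborhood of an interior point $(t,p,p') \in \bbR \times X_0^\circ \times X_0^\circ$, we may invoke the fact recalled after Proposition \ref{thm:domains-b-Sobolev} that $\rtLap_0$ is microlocal over $X_0^\circ$, i.e.\ pseudodifferential of order $1$ with principal symbol $|\zeta|_{g_0}$; similarly $D_t$ is just the standard first-order operator on $\bbR_t$. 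Both operators therefore raise the Lagrangian order by $1$, upgrading the order $-\frac{5}{4}-\frac{n-1}{2}$ of $\bmE_0$ from Proposition \ref{proposition:sinproduct} to the claimed order $-\frac{1}{4}-\frac{n-1}{2}$ for $\bmU_0$.

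The core symbolic calculation is then routine. On the conormal bundle $N^*\{x+x'=t\}$ a covector has the form $\xi(dx+dx'-dt)$, so the principal symbol of $\rtLap_0$ acting in the left variable evaluates to $|\xi|$ on this conormal, and $D_t$ contributes $-\xi$. Applying these multiplications to the principal amplitude $\calE$ in \eqref{sinsymbolofonediffraction} and summing as in the identity above, the $\xi>0$ contributions from $\cos(t\rtLap_0)$ and $-i\sin(t\rtLap_0)$ add constructively to yield
\begin{equation*}
  \frac{(xx')^{-\frac{n-1}{2}}}{2\pi i}\,\chi(\xi)\,\calK\!\left[e^{-i\pi\nu}\right]\!(y,y')\,\omega_{g_0}(x,y)\,\omega_{g_0}(x',y'),
\end{equation*}
while the $\xi<0$ contributions cancel exactly (the $\cos$ and $-i\sin$ pieces differ by a sign on the negative-frequency part). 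The vanishing of the $\xi<0$ principal symbol is expected: the forward half-wave propagator $e^{-it\rtLap_0}$ is a positive-frequency object, so its wavefront relation lies in the outgoing half of the conormal bundle. After this cancellation the symbol cutoff $\cutoff$ may be replaced by one supported only in $[1,\infty)$, matching the statement.

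The main obstacle, such as it is, is to check that the cancellation of subprincipal contributions on $\xi<0$ actually produces a symbol one half-order lower (so that the claim ``modulo $S^{-\frac{1}{2}}$'' is accurate), and to track the half-density conventions carefully in converting Cheeger--Taylor's scalar expansion into the half-density normalization used here. The former follows because the $\frac{1}{2}$-step expansion noted after \eqref{sinsymbolofonediffraction} propagates through application of the classical operators $D_t$ and $\rtLap_0$ without introducing any additional loss, and the latter was already arranged in the final step of the proof of Proposition \ref{proposition:sinproduct}. With both points in place, the corollary follows.
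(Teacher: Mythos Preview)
Your proof is correct and follows essentially the same approach as the paper: both use Euler's formula $e^{-it\rtLap_0}=\cos(t\rtLap_0)-i\sin(t\rtLap_0)$, compute the cosine piece by differentiating $\bmE_0$ in $t$ and the sine piece by applying the pseudodifferential operator $\rtLap_0$ (with principal symbol $|\xi|$ on the conormal), and then observe the constructive addition on $\xi>0$ and exact cancellation on $\xi<0$. The only minor slip is that you write ``$D_t$ contributes $-\xi$'' after having written $\frac{d}{dt}$ in the identity---it is $\partial_t$ that multiplies the amplitude by $-i\xi$---but this does not affect the outcome.
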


\begin{proof}
  We compute the kernel of the half-wave operator via Euler's formula:
  \begin{equation*}
    e^{-it\sqrt{\smash[b]{\Delta_0}}} = \cos\!\left( t \sqrt{ \smash[b]{\Delta_0} }
    \right) - i \sin\!\left( t \sqrt{ \smash[b]{\Delta_0} } \right) .
  \end{equation*}
  Differentiating the oscillatory integral expression for $\bmE_0$ in the
  $\xi$-variable produces an expression for the Schwartz kernel of the cosine
  propagator
  \begin{equation*}
    \dot{\vec{W}}_0(t) \defeq \cos\!\left( t \sqrt{ \smash[b]{\Delta_0} }
    \right),
  \end{equation*}
  bringing a factor of $\frac{1}{i} \, \xi$ into the amplitude from
  differentiating the phase.  Thus, the principal term in the amplitude of
  $\dot{\bmE}_0 \defeq \calK \! \left[ \dot{\mathbf{W}}_0(t) \right]$ is
  \begin{equation*}
    (x x')^{-\frac{n-1}{2}} \, \frac{\cutoff(\xi)}{4\pi i} \frac{\xi}{|\xi|} \left[
      H(\xi) \, e^{-i\pi\nu} + H(-\xi) \, e^{i\pi\nu} \right] \omega_g(x,y) \,
    \omega_g(x',y') . 
  \end{equation*}
  To determine the principal amplitude of the operator
  $\sin \! \left( t \sqrt{ \smash[b]{\Lap_0}} \right)$, we
  recall\footnote{Technically, the noncompactness of $X_0$ makes these results
    not directly applicable, but ``closing up'' the large end of the cone yields
    a compact manifold that is locally identical to $X_0$ near the cone points
    on which we may as well (by finite speed of propagation) study the
    propagators.} from \cite{Loya} that $\sqrt{ \smash[b]{\Lap_0}}$ is a
  pseudodifferential operator over $X_0^\circ$ (it lies in the ``big
  b-calculus'' of Melrose \cite{Melrose:APS}), and hence applying it to the
  conormal distribution given in Proposition~\ref{proposition:sinproduct} has
  the effect of multiplying the amplitude by the value of the symbol of the
  pseudodifferential operator $\sqrt{\smash[b]{\Lap_0}}$ along the Lagrangian
  submanifold $N^*\{x + x' = t\}$, to wit, $|\xi|$.  Thus, the leading order
  amplitude of $-i \sin \!  \left( t \sqrt{\smash[b]{\Lap_0}} \right)$ away from
  $\left\{ \dist_{h_0}(y,y') = \pi \right\}$ is
  \begin{equation*}
    (x x')^{-\frac{n-1}{2}} \, \frac{\cutoff(\xi)}{4\pi i} \left[ H(\xi) \,
      e^{-i\pi\nu} + H(-\xi) \, e^{i\pi\nu}
    \right] \omega_g(x,y) \, \omega_g(x',y') .
  \end{equation*}
  Adding these two contributions produces \eqref{symbolofonediffraction}.
\end{proof}



\section{Single diffraction on a non-product cone}
\label{section:single}

In this section, we use the information from Corollary
\ref{symbolofonediffraction} on the structure of diffraction on the product cone
to understand the analogous structure on our more general conic manifold $X$.
The finite speed of propagation implies that we only need to understand a single
diffraction, and thus we may work in a small collar neighborhood in $X$ of a
boundary component $Y_\alpha$,
\begin{equation*}
  C_\alpha \defeq \left[ 0, \frac{x_*}{2} \right) \times Y_\alpha ,
\end{equation*}
with $x_*$ as in Section~\ref{sec:conic-geometry}.
We write
$C_\alpha^\circ \defeq \left( 0 , \frac{x_*}{2} \right) \times Y_\alpha$ for the
interior of this collar neighborhood.

We work with two different metrics on the collar
neighborhood\footnote{Technically, we should consider these objects to
  live on the distinct manifolds $X_0$ and $X,$ identified by the
  flowout map $\F$ defined above; however, we will abuse notation to
  the extent of leaving this identification tacit.}  $C_\alpha$:  the
conic metric in designer form $g = dx^2 + x^2 \, h(x,y,dy)$ and the associated
product metric $g_0 = dx^2 + x^2 \, h(0,y,dy)$ coming from the boundary metric
$h(0)$.  Associated to these metrics are their (nonnegative) Laplace-Beltrami
operators, $\Delta_g$ and $\Delta_0 \defeq \Delta_{g_0}$, and their wave
operators, $\Box_g \defeq D_t^2 - \Delta_g$ and
$\Box_0 \defeq D_t^2 - \Delta_0$.  We now show that when half-density factors
are taken into account, the $D_x$-terms in $\Box_g$ and $\Box_0$ agree.  This is
very helpful in proving the conormality of the diffractive part of the
propagator as the remaining first-order $D_y$-terms act harmlessly on
distributions associated to $N^* \left\{ t = x + x' \right\}$, where we now
consider this as a Lagrangian submanifold of
$\Tdot^* \! \left( \bbR \times C_\alpha^\circ \times C_\alpha^\circ \right)$.

\begin{lemma}
  \label{lemma:box-relation-product-nonproduct}
  As operators on half-densities, we have
  \begin{equation*}
    \Box_g - \Box_0 \in x^{-1} \calC^\infty \! \left( \left[ 0,
        \frac{x_*}{2} \right); \Diff^2(Y) \right). 
  \end{equation*}
\end{lemma}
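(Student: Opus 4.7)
Since $\Box_g - \Box_0 = \Delta_0 - \Delta_g$, the claim reduces to showing $\Delta_g - \Delta_0 \in x^{-1}\CI([0,x_*/2); \Diff^2(Y))$ as operators on half-densities. To compare both half-density operators with a common domain I would trivialize them using the product-metric half-density $\omega_{g_0}$. Writing $\omega_g = \phi \cdot \omega_{g_0}$ with $\phi \defeq [\det h(x,y)/\det h(0,y)]^{1/4}$ smooth, positive, and equal to $1$ at $x = 0$, the operator $\Delta_0$ acts in this trivialization as the scalar expression $\Delta_0^{\mathrm{sc}} = D_x^2 - \tfrac{i(n-1)}{x} D_x + \tfrac{1}{x^2}\Delta_{h(0)}$, whereas $\Delta_g$ acts as $\phi \, \Delta_g^{\mathrm{sc}} \, \phi^{-1}$, with $\Delta_g^{\mathrm{sc}}$ given by \eqref{conelaplacian}.

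The crux of the argument is a cancellation in this conjugation. Subtracting the two scalar formulas gives
\begin{equation*}
  \Delta_g^{\mathrm{sc}} - \Delta_0^{\mathrm{sc}} = -i e(x) \, D_x + \frac{1}{x^2}\bigl[\Delta_{h(x)} - \Delta_{h(0)}\bigr],
\end{equation*}
which contains a stray $x$-derivative. However, since $\del_x \log \phi = \tfrac{1}{4} \del_x \log \det h(x,y) = \tfrac{1}{2} e(x)$, we have $\phi D_x \phi^{-1} = D_x + \tfrac{i}{2} e(x)$, and therefore $\phi D_x^2 \phi^{-1} = D_x^2 + i e(x) D_x + \tfrac{1}{2} e'(x) - \tfrac{1}{4} e(x)^2$. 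The extra $+i e(x) D_x$ precisely cancels the $-ie(x) D_x$ contribution in the scalar difference, and a short computation then yields
\begin{equation*}
  \phi \Delta_g^{\mathrm{sc}} \phi^{-1} - \Delta_0^{\mathrm{sc}} = \frac{(n-1) e(x)}{2x} + \frac{e'(x)}{2} + \frac{e(x)^2}{4} + \frac{1}{x^2}\bigl[\phi \, \Delta_{h(x)} \phi^{-1} - \Delta_{h(0)}\bigr],
\end{equation*}
with no remaining $x$-derivatives of any order.

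It then remains to check that the right-hand side lies in $x^{-1} \CI([0,x_*/2); \Diff^2(Y))$. The functions $\tfrac{1}{2} e'(x)$ and $\tfrac{1}{4} e(x)^2$ are smooth in $(x,y)$, and $\tfrac{(n-1) e(x)}{2x}$ is $x^{-1}$ times a smooth function. For the bracketed operator-valued term, the family $\phi \, \Delta_{h(x)} \phi^{-1} - \Delta_{h(0)}$ is smooth in $x \in [0, x_*/2)$ with values in $\Diff^2(Y)$ and vanishes at $x = 0$ (since $\phi\vert_{x=0} = 1$ and $h(x)\vert_{x=0} = h(0)$); by Taylor's theorem it factors as $x \cdot A(x)$ with $A \in \CI([0,x_*/2); \Diff^2(Y))$, so division by $x^2$ produces an element of $x^{-1}\CI([0,x_*/2); \Diff^2(Y))$, as required. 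The only real subtlety is the $D_x$-cancellation, which reflects that $g$ and $g_0$ share the same singular radial structure at the cone point---passing to half-densities eliminates the first-order $x$-derivative discrepancy present at the scalar level.
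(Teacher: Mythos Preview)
Your proof is correct and follows essentially the same strategy as the paper: trivialize the half-density bundle, compare the resulting scalar operators, and verify that all $D_x$-terms cancel so that only $Y$-differential operators (with the stated $x$-regularity) survive. The one notable difference is organizational. You trivialize with $\omega_{g_0}$, so that $\Delta_0$ appears in its raw scalar form and only $\Delta_g$ requires conjugation by $\phi = [\det h(x)/\det h(0)]^{1/4}$; you then verify the $D_x$-cancellation by an explicit computation of $\phi D_x^2 \phi^{-1}$. The paper instead trivializes with the non-invariant half-density $\varpi = x^{(n-1)/2}\,|dx\wedge dy|^{1/2}$, so that \emph{both} operators get conjugated, and then dispatches the $D_x$-cancellation by the observation that $[\det h(x)]^{-1/4} D_x [\det h(x)]^{1/2} D_x [\det h(x)]^{-1/4}$ is formally self-adjoint on $L^2(\bbR_x,dx)$ with real coefficients and principal symbol $\xi^2$, forcing the subprincipal $D_x$-coefficient to vanish. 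This self-adjointness trick is slightly slicker, but your direct computation is equally valid and arguably more transparent about exactly which terms cancel.
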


\begin{proof}
  By definition, $\Box_g$ and $\Box_0$ act trivially on two different
  trivializations of the half-density bundle: $\Box_g \omega_g = 0$ and
  $\Box_0 \omega_{g_0} = 0$, where
  \begin{equation*}
    \omega_g \defeq x^{\frac{n-1}{2}} \left[ \det h(x) \right]^{\frac{1}{4}}
    \left| dx \wedge dy
    \right|^\frac{1}{2} \quad \text{and} \quad 
    \omega_{g_0} \defeq x^{\frac{n-1}{2}} \left[ \det h(0) \right]^{\frac{1}{4}}
    \left| dx \wedge dy 
    \right|^\frac{1}{2} . 
  \end{equation*}
  In order to compare the operators we write them both in terms of the new
  half-density
  \begin{equation*}
    \varpi \defeq x^{\frac{n-1}{2}} \left| dx \wedge dy \right|^\frac{1}{2},
  \end{equation*}
  defined (non-invariantly!) in a local coordinate patch in $y$.  Then
  \begin{equation*}
    \Box_g(f \, \varpi) = (Pf) \, \varpi \quad \text{and} \quad \Box_0(f \,
    \varpi) = (P_0 f) \, \varpi 
  \end{equation*}
  with operators on the coefficients of $\varpi$ locally given by
  \begin{equation*}
    P \defeq \left[ \det h(x) \right]^\frac{1}{4} \, \Box_{g,\scal} \,
    \left[ \det h(x) \right]^{-\frac{1}{4}} \quad 
    \text{and} \quad P_0 \defeq \left[ \det h(0)\right]^\frac{1}{4} \,
    \Box_{0,\scal} \, \left[ \det h(0) \right]^{-\frac{1}{4}} . 
  \end{equation*}
  Here, we use the notation $\Box_{\bullet,\scal}$ to indicate the usual scalar
  d'Alembertian operators
  \begin{equation*}
    \Box_{g,\scal} \defeq D_t^2 - \left[ \frac{1}{x^{n-1}} \, \left[ \det h(x)
      \right]^{-\frac{1}{2}} D_x \left[ \det h(x)\right]^{\frac{1}{2}} \,
      x^{n-1} \, D_x + \frac{1}{x^2} \, \Delta_{h(x),\scal} \right]
  \end{equation*}
  and
  \begin{equation*}
    \Box_{0,\scal} \defeq D_t^2 - \left[ \frac{1}{x^{n-1}} \, D_x \, x^{n-1} \,
      D_x + \frac{1}{x^2} \, \Delta_{h(0),\scal} \right]
  \end{equation*}
  (with scalar Laplacians on $Y$ denoted in the analogous way).  Thus,
  \begin{multline*}
    P = D_t^2 - \left[ -\frac{i (n-1)}{x} \, D_x + \left[ \det h(x)
      \right]^{-\frac{1}{4}} D_x \left[ \det h(x) \right]^{\frac{1}{2}} D_x
      \left[ \det h(x) \right]^{-\frac{1}{4}} \right. \\
    \left. \mbox{} + \frac{1}{x^2} \left[ \det h(x) \right]^{\frac{1}{4}} \,
      \Delta_{h(x),\scal} \left[ \det h(x) \right]^{-\frac{1}{4}} \right],
  \end{multline*}
  while since $\del_x \! \left[ \det h(0) \right] = 0$,
  \begin{equation*}
    P_0 = D_t^2 - \left[ -\frac{i (n-1)}{x} \, D_x + D_x^2 + \frac{1}{x^2}
      \left[ \det h(0) \right]^{\frac{1}{4}} \, \Delta_{h(0),\scal} \left[ \det
        h(0) \right]^{-\frac{1}{4}} \right].
  \end{equation*}
  Thus,
  \begin{multline*}
    -(P - P_0) = \left[ \det h(x) \right]^{-\frac{1}{4}} D_x \left[ \det h(x)
    \right]^\frac{1}{2} D_x \left[ \det h(x) \right]^{-\frac{1}{4}} - D_x^2 \\
    \mbox{} + \frac{1}{x^2} \left[ \det h(x) \right]^{\frac{1}{4}} \,
    \Delta_{h(x),\scal} \left[ \det h(x) \right]^{-\frac{1}{4}} \\
    \mbox{} - \frac{1}{x^2} \left[ \det h(0) \right]^{\frac{1}{4}} \,
    \Delta_{h(0),\scal} \left[ \det h(0) \right]^{-\frac{1}{4}} .
  \end{multline*}
  We note that the operator on $\CI(\bbR_x)$ given by
  \begin{equation*}
    \left[ \det h(x)\right]^{-\frac{1}{4}} D_x \left[ \det h(x)\right]
    ^{\frac{1}{2}}  D_x \left[ \det h(x) \right]^{-\frac{1}{4}}
  \end{equation*}
  is formally self-adjoint on $L^2(\bbR_x,dx)$, has real coefficients when
  written in terms of $\del_x$, and has principal symbol $\xi^2$.  Thus, the
  subprincipal term---the coefficient of $\del_x$---vanishes, and the above
  operator equals $D_x^2 + \text{zeroth order term}$ (with the zeroth order term
  smooth, to boot).  Since we also have $h(x)=h(0) + \mathrm{O}(x)$, we now find
  that all the $D_x^2$- and $D_x$-terms cancel, and hence
  \begin{equation*}
    P - P_0 \in x^{-1} \calC^\infty \! \left( \left[0,\frac{x_*}{2} \right);
      \Diff^2(Y) \right) .
  \end{equation*}
  The lemma follows.
\end{proof}

We now state the principal result of this section.
\begin{theorem}
  \label{theorem:samesymbol}
  Uniformly away from the set
  $\left\{ \dist_{h(0)}(y,y') = \pi \right\} \subseteq C_\alpha^\circ \times
  C_\alpha^\circ$,
  the amplitudes\footnote{Note that this requires we use the same phase function
    in both oscillatory integral representations.} of the diffracted waves for
  the half-wave propagators $\calU(t)$ and $\calU_0(t)$ (as operators on
  half-densities) agree in the space
  \begin{equation*}
    \frac{S^0 \! \left( \bbR_t \times C_\alpha^\circ \times C_\alpha^\circ \times
        \bbR_\xi ; \HD(C_\alpha^\circ \times C_\alpha^\circ) \right)}{
      S^{-\gamma + 0} \! \left( \bbR_t \times C_\alpha^\circ \times C_\alpha^\circ
        \times \bbR_\xi ; \HD(C_\alpha^\circ \times C_\alpha^\circ) \right)},  
  \end{equation*}
  where $\gamma = \frac{1}{2}$ unless $h(x) - h(0) = \mathrm{O}(x^2)$, in which
  case $\gamma = 1$.
\end{theorem}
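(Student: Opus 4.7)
The plan is to build a parametrix for $\calU$ on the collar $C_\alpha$, starting from the explicit product-cone propagator $\calU_0$ from Corollary~\ref{cor:onediffraction} and correcting it using the operator-level comparison of Lemma~\ref{lemma:box-relation-product-nonproduct}. By finite speed of propagation, we may localize all computations to a neighborhood of $Y_\alpha$ in which only single diffractions occur, so that $\calU$ and $\calU_0$ may be treated on the same footing.

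Since $\Box_0 \calU_0 = 0$ identically, we have $\Box_g \calU_0 = V \calU_0$, where $V \defeq \Box_g - \Box_0 \in x^{-1}\calC^\infty([0,x_*/2);\Diff^2(Y))$ by Lemma~\ref{lemma:box-relation-product-nonproduct}; in the improved case $h(x) - h(0) = \mathrm{O}(x^2)$ the $x^{-1}$ coefficient is cancelled and $V \in \calC^\infty([0,x_*/2);\Diff^2(Y))$. The crucial observation is that the phase $\phi(t,x,x',\xi) = (x + x' - t)\xi$ of the oscillatory integral representing $\calU_0$ is independent of $y$, so that the $y$-derivatives in $V$ act only on the amplitude and produce no powers of $\xi$. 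Thus $V$ preserves the $\xi$-order of the amplitude. Feeding in the principal amplitude $\calD$ from \eqref{symbolofonediffraction}, the source $V \calU_0$ is a Lagrangian distribution on $\Lambda = N^*\{t = x + x'\}$ whose amplitude is an $S^0$ symbol in $\xi$, carrying an $x^{-1}$ boundary weight generically and a smoother $\mathrm{O}(1)$ weight in the improved case.

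To propagate away this error and obtain $\calU - \calU_0$, I would apply Duhamel's formula, writing the difference in terms of the sine propagator $W$ of $\Box_g$ as $\calU(t) - \calU_0(t) \sim \int_0^t W(t-s) V \calU_0(s)\, ds$. Integrating along the bicharacteristics of $\Box_g$ produces a gain of one factor of $|\xi|^{-1}$ from the vanishing of the principal symbol on $\Lambda$ (the standard transport mechanism), giving the full $\gamma = 1$ improvement whenever $V$ has smooth coefficients. The main obstacle — and the step requiring the most care — is the generic case: the surviving $x^{-1}$ boundary singularity in $V \calU_0$ must be resolved against the oscillatory factor before integration, and one expects a boundary-layer / Airy-type half-order gain that produces the stated $|\xi|^{-1/2 + 0}$ improvement rather than the full $|\xi|^{-1}$. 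The b-Sobolev mapping properties of $\rtLap$ from Proposition~\ref{thm:domains-b-Sobolev} provide the analytic control needed to justify that this boundary-layer analysis gives exactly a loss of $\varepsilon$ for every $\varepsilon > 0$ but no more. One finally iterates the Born series (or verifies inductively that all subsequent terms are of strictly lower order) to conclude the claimed symbolic equivalence modulo $S^{-\gamma + 0}$.
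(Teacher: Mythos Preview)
Your overall Duhamel skeleton is correct and matches the paper: write $\Box_g \calU_0 = V\calU_0$ with $V=\Box_g-\Box_0$ as in Lemma~\ref{lemma:box-relation-product-nonproduct}, observe that $V$ involves only $y$-derivatives, and then correct $\calU_0$ via a Duhamel term. However, the heart of the argument --- the mechanism giving the half-order gain in the generic case --- is misidentified. There is no ``Airy-type boundary layer'' here. What actually happens is this: one applies the Morawetz (local smoothing) estimate for the wave equation on the \emph{product} cone, which gives $u(t)=\calU_0(t)f\in x\,L^2$ in spacetime for suitable initial data $f$. Interpolating with the Lagrangian regularity $u\in L^2$ yields $u\in x^{1-0}L^2$, and since $V\in x^{-1}\mathcal C^\infty([0,x_*/2);\Diff^2(Y))$, the $x$-weight in $u$ exactly cancels the $x^{-1}$ in $V$, giving $Vu\in x^{-0}L^2 \subset L^2(I;\calD_{-0})$. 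Duhamel then places the correction in $L^\infty(I;\calD_{1-0})$, which is $\frac12-0$ derivatives better than where one started. In the improved case $h(x)-h(0)=\mathrm O(x^2)$ there is no $x^{-1}$ to cancel and one gets a full derivative. Without the Morawetz estimate (or something equivalent providing the $x$-weight), your proposed argument has no way to turn the $x^{-1}$ coefficient of $V$ into a $|\xi|^{-1/2}$ loss; the transport-equation reasoning you invoke would just leave the boundary singularity uncompensated.

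A second, smaller gap: the paper does not attempt to prove the symbol statement directly at the amplitude level. Instead it proves the regularity gain on \emph{test data} $f\in \IH^{1/2}(C_\alpha^\circ,N^*\{x=x_0\})$ (this is Lemma~\ref{lemma:reduction}), and then transfers that to the amplitude via the appendix Lemma~\ref{lemma:averagedsymbol}, which says roughly that if pairing an $L^2$-symbol $a$ against all $L^2$-symbols $b$ in the $z$-variable always produces a lower-order symbol, then $a$ itself was already of lower order. This indirect route sidesteps the need to track the boundary weight through a symbol-level parametrix iteration; your proposed direct symbol iteration would require exactly the careful boundary-weight bookkeeping that the Morawetz estimate handles cleanly at the function-space level.
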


Let us recall what is already known about each of the half-wave propagators
$\calU(t)$ and $\calU_0(t)$.  From Corollary \ref{symbolofonediffraction}, we
know that near strictly diffractively related points in
$\bbR_t \times C_\alpha^\circ \times C_\alpha^\circ$ the half-wave kernel
$\bmU_0 \defeq \calK\!\left[\calU_0(t)\right]$ is locally an element of
\begin{multline*}
  I^{-\frac{1}{4} - \frac{n-1}{2}} \! \left( \bbR \times C_\alpha^\circ \times
    C_\alpha^\circ, N^*\{ t = x + x' \} ; |\Omega|^\frac{1}{2}(C_\alpha^\circ
    \times C_\alpha^\circ) \right) \\
  \mbox{}= \IB^{-\frac{1}{2}}_{2,\infty} \!  \left( \bbR \times C_\alpha^\circ
    \times C_\alpha^\circ, N^* \{ t = x + x' \} ;
    |\Omega|^\frac{1}{2}(C_\alpha^\circ \times C_\alpha^\circ) \right) ,
\end{multline*}
and its principal amplitude is \eqref{symbolofonediffraction} when the phase
function $\phi(t,x,x',\xi) = (x + x' - t) \cdot \xi$ is used in the oscillatory
integral representation (see Appendix~\ref{sec:lagrangian-dists-amplitudes}
for the definitions of the spaces employed here).\footnote{We may instead use the phase function
  $\psi(t,x,x',\eta) = (t - x - x') \cdot \eta$, which would simply correspond
  to changing the sign of the phase variable in the formula
  \eqref{symbolofonediffraction}.}  We also know from the main result of the
work of Melrose and the second author \cite{Melrose-Wunsch1} that in this same
region $\bmU \defeq \calK \!  \left[\calU(t)\right]$ is locally in
\begin{equation*}
  \IH^{-\frac{1}{2}-0} \! \left( \bbR \times C_\alpha^\circ \times C_\alpha^\circ,
    N^*\{t = x + x'\}; \HD(C_\alpha^\circ \times C_\alpha^\circ) \right),  
\end{equation*}
the space of Lagrangian distributional half-densities associated to
$N^*\{t = x + x'\}$ with iterated $H^{-\frac{1}{2}-0}_\loc$-regularity, though
that theorem gives no further information on its amplitude.  Thus, to show the
amplitudes are the same to leading order, we shall need to relate these two
spaces of Lagrangian half-densities and, in particular, relate the amplitudes
associated to each.  This comparision comes in the form of the following lemma,
where we test difference $\calU(t) - \calU_0(t)$ against data which is
Lagrangian with respect $N^*\left\{ x = x_0 \right\}$, the conormal bundle to a
transverse ``slice'' of the cone.

\begin{lemma}
  \label{lemma:reduction}
  Suppose $y_0$ and $y_0'$ are points of $Y_\alpha$ such that
  $\dist_{h(0)}(y_0,y_0') \neq \pi$, i.e., points in the boundary whose flowouts
  into the interior are strictly diffractively related.  Let
  $s \geqslant \frac{1}{2}$, and let
  $f \in H^s \!  \left(C_\alpha^\circ ; \HD(C_\alpha^\circ) \right)$ be a
  distributional half-density which is Lagrangian with respect to
  $N^*\{x = x_0\}$ and supported in a sufficiently small neighborhood of
  $(x_0,y_0')$.  Then for times
  $t \in I \defeq \left(x_0 , x_0 + \frac{x_*}{2} \right)$, we have
  \begin{equation*}
    \left[ \calU(t) - \calU_0(t) \right] f \in H^{s + \gamma - 0} \! \left( I
      \times C_\alpha^\circ ; |\Omega|^\frac{1}{2}(C_\alpha^\circ) \right) 
  \end{equation*}
  locally near $\{x = t - x_0\}$, where $\gamma$ is as in
  Theorem~\ref{theorem:samesymbol}.
\end{lemma}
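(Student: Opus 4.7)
The plan is to apply Duhamel's principle to the wave equation for $v \defeq [\calU(t) - \calU_0(t)]f$, use Lemma~\ref{lemma:box-relation-product-nonproduct} to express the source as $x^{-\beta} R(x,y,D_y)\calU_0(s)f$ (with $\beta\in\{0,1\}$), and exploit the Lagrangian structure of $\calU_0(s)f$ from Corollary~\ref{cor:onediffraction} together with the purely tangential nature of $R$ with respect to the diffractive Lagrangian.

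First, I would set up the wave equation. Since $\calU(t)f$ and $\calU_0(t)f$ solve $\Box_g u = 0$ and $\Box_0 u = 0$ on half-densities, the difference satisfies
\begin{equation*}
  \Box_g v = -(\Box_g - \Box_0)\calU_0(t)f \defeq F, \quad v(0) = 0, \quad (D_t v)(0) = -(\rtLap - \sqrt{\Delta_0})f,
\end{equation*}
and finite speed of propagation keeps the relevant region inside $C_\alpha^\circ$.  By Corollary~\ref{cor:onediffraction} and the hypothesis $\dist_{h(0)}(y_0,y_0') \neq \pi$, the propagated datum $\calU_0(s)f$ is locally a Lagrangian distribution with respect to $N^*\{x = s - x_0\}$ for $s > x_0$ (the diffracted wave, with amplitude given explicitly) and $N^*\{x = x_0 + s\}$ for all $s$ (the direct outgoing wave); no geometric reflection enters.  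Each of these Lagrangians lies in $\{\eta = 0\}$.

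Second, I would combine Lemma~\ref{lemma:box-relation-product-nonproduct} with this Lagrangian structure.  Write $\Box_g - \Box_0 = x^{-\beta} R(x,y,D_y)$ where $R$ is a smooth $x$-family of second-order differential operators in the $y$-variables and $\beta = 0$ in the improved case $h(x) - h(0) = \mathrm{O}(x^2)$, $\beta = 1$ otherwise.  Because $R$ contains only $y$-derivatives, its total symbol vanishes on $\{\eta = 0\}$ to the order of each homogeneous piece, so by the standard conormal calculus applied to Lagrangian distributions conormal to a hypersurface with cotangent direction $\{\eta=0\}$, we have $R\calU_0(s)f$ in the same base Sobolev class as $\calU_0(s)f$, modulo terms gaining an explicit power of $x$ from the Taylor expansion of the coefficients of $R$ at $x=0$.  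The factor $x^{-\beta}$ is bounded on compact subsets away from the cone, and combining with the Taylor expansion of $R$'s coefficients yields $F$ with a gain of $\gamma - 1$ Sobolev orders over $\calU_0(s)f$, with $\gamma \in \{\tfrac{1}{2}, 1\}$ as stated (half-order in the generic case because the zeroth-order part of the Taylor expansion of $R$ at $x=0$ does not gain from the Lagrangian tangency).

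Third, the Duhamel formula
\begin{equation*}
  v(t) = \frac{\sin(t\rtLap)}{\rtLap}\bigl[-(\rtLap - \sqrt{\Delta_0})f\bigr] + \int_0^t \frac{\sin((t-s)\rtLap)}{\rtLap} F(s)\,ds
\end{equation*}
combined with the standard energy estimate $\sin(r\rtLap)/\rtLap \colon H^{s'} \to H^{s'+1}$ in the interior delivers $v \in H^{s+\gamma-0}$ locally near $\{x = t - x_0\}$.  The initial-velocity term is handled using the fact that $\rtLap - \sqrt{\Delta_0}$, considered microlocally in the interior via the functional calculus, is a pseudodifferential operator of order $1$ with symbol of order $\mathrm{O}(x^\gamma)$ (from $g - g_0 = \mathrm{O}(x^\gamma)$), which applied to $f$ supported away from the cone gives a contribution matching the target regularity.

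The main obstacle will be the precise bookkeeping to extract the half-order gain $\gamma = \tfrac{1}{2}$ in the generic case, since a naive Lagrangian-tangency argument predicts a full integer gain; the $\tfrac{1}{2}$-order loss comes from the interplay between the $x^{-1}$ singularity in $\Box_g - \Box_0$ and the approach of the diffractive front $\{x = t - x_0\}$ toward the cone as $t \downarrow x_0$, requiring a careful b-Sobolev / iterated-regularity argument rather than a naive $H^s$-level commutator computation.
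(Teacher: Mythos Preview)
Your Duhamel setup and the observation that $R = x^\beta(\Box_g - \Box_0)$ involves only $y$-derivatives (hence acts tangentially on distributions conormal to $\{x = \text{const}\}$) are both correct and are used in the paper's argument as well.  The gap is in how you obtain the $\tfrac12$-derivative gain in the generic case $\gamma = \tfrac12$.

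You write that ``$x^{-\beta}$ is bounded on compact subsets away from the cone,'' but the Duhamel integral $\int_0^t \calU(t-s)\,E\,\calU_0(s)f\,ds$ runs over all $s \in (0,t)$, and at $s = x_0$ the wave $\calU_0(s)f$ is concentrated at the cone point.  So there is no way to avoid the $x^{-1}$ singularity by staying in a compact interior set; the whole difficulty is precisely uniform control as $x \to 0$.  Your closing paragraph recognizes this as the ``main obstacle'' but offers no mechanism to resolve it, and the vague appeal to ``Taylor expansion of the coefficients of $R$'' does not supply the needed factor of $x$.

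The paper's device here is a \emph{Morawetz estimate} on the product cone: for $n \geqslant 3$ one has $\calU_0(\cdot)f \in x\,L^2(\bbR_t \times C_\alpha^\circ)$ globally in spacetime (with a separate zero-mode argument when $n=2$).  Interpolating this weighted estimate against the unweighted $L^2$ Lagrangian regularity gives $\calU_0(s)f$ Lagrangian with respect to the base space $x^{1-0}L^2$.  Now the $x^{-1}$ in $E$ is exactly compensated, so $E\,\calU_0(s)f \in L^2(I;\calD_{-0})$, and Duhamel returns $v \in L^\infty(I;\calD_{1-0})$, i.e.\ a gain of $\tfrac12 - 0$ over $\calD_{1/2}$.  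Without this weighted input (or an equivalent local-smoothing estimate), the $x^{-1}$ factor is uncontrolled near $s = x_0$ and your argument does not close.
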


\begin{remark}
  Note that we do not need to bother specifying whether conormality is with
  respect to $H^s\!\left(C_\alpha^\circ ; \HD(C_\alpha^\circ) \right)$ or some
  other Sobolev space with less regularity.  Indeed, by interpolation we obtain
  conormality with respect to
  $H^{s-0}\!\left( C_\alpha^\circ ; \HD(C_\alpha^\circ) \right)$, and this will
  suffice to prove the lemma.  We may therefore assume without loss of
  generality that $f$ is an element of
  \[\mathit{IH}^{\frac{1}{2}} \! \left( C_\alpha^\circ, N^*\{x = x_0\} ;
    |\Omega|^\frac{1}{2}(C_\alpha^\circ) \right), \]
  the minimal regularity required.
\end{remark}

\begin{proof}[Proof of Lemma~\ref{lemma:reduction}]
  Let $y_0$ and $y_0'$ be as in the statement of the lemma.  Taking into account
  the previous remark, let $f$ be in
  $\IH^{\frac{1}{2}} \! \left( C_\alpha^\circ , N^* \{ x = x_0 \} ;
    |\Omega|^\frac{1}{2}(C_\alpha^\circ) \right)$
  with support in a small neighborhood of $(x_0,y_0')$.  We observe that $f$ is
  then an element of the domain $\calD_{\frac{1}{2}}$ by microlocality of the
  powers of $\sqrt{ \smash[b]{\Delta_g} }$ away from the boundary.  As it is
  sufficient to show $\left[ \calU(t) - \calU_0(t) \right] f$ is locally an
  element of
  $H^{\frac{1}{2} + \gamma - 0} \! \left( I \times C_\alpha^\circ ;
    |\Omega|^\frac{1}{2}(C_\alpha^\circ) \right)$
  away from the geometric rays emanating from the support of $f$, this is how we
  will proceed.  We start with the general case $\gamma = \frac{1}{2}$.

  Define $u(t) \defeq \calU_0(t) f$ to be the associated solution to
  $\Box_0 u(t) = 0$ on $\bbR_t \times C_\alpha^\circ$ with initial half-wave
  data $f$.  By unitarity of $\calU_0(t)$, the solution $u(t)$ is an element of
  $L^\infty \!  \left( \bbR_t ; \calD_\frac{1}{2} \right)$, implying in
  particular that
  \begin{equation}
    \label{eq:reduction-lemma-reg-1}
    u(t) \in L^2 \! \left( I
      \times C_\alpha^\circ ; |\Omega|^\frac{1}{2}(C_\alpha^\circ) \right) .
  \end{equation}
  It is moreover a Lagrangian distributional half-density with respect to this
  regularity and the Lagrangian $N^*\{x = t - x_0\}$.  Writing
  $E \defeq \Box_g - \Box_0$, we compute that
  \begin{equation*}
    \Box_g u(t) = \left[ \Box_0 + E \right] u(t) = E u(t) .
  \end{equation*}

  We now compute via Duhamel's formula that
  \begin{equation*}
    u(t) = \calU(t) f + \int_{ 0}^t \calU(t - s) \, E u(s) \, ds ,
  \end{equation*}
  and in particular
  \begin{equation}
    \label{duhamel}
    \left[ \calU_0(t) - \calU(t) \right] f = \int_{ 0}^t \calU(t - s) \, E u(s) \, ds .
  \end{equation}
  Suppose for the moment that the spatial dimension satisfies $n \geqslant 3$.
  We then have the \emph{Morawetz estimate} for the wave equation on the product
  cone $\left((\bbR_+)_x \times Y_\alpha, g_0\right)$ (cf.~\cite{BFM}*{Theorem
    4.1 and Remark 4.2} or the proof of Theorem 2 of \cite{BPST}):
  \begin{equation*}
    u(t) \in x L^2\!\left( \bbR_t \times (\bbR_+)_x \times Y_\alpha ;
      |\Omega|^\frac{1}{2}((\bbR_+)_x \times Y_\alpha) \right) .
  \end{equation*}
  As we are only interested in the submanifold $C_\alpha^\circ$ of
  $(\bbR_+)_x \times Y_\alpha$ and times $t \in I$, we also have
  \begin{equation*}
    u(t) \in x L^2\!\left( I \times C_\alpha^\circ ;
      |\Omega|^\frac{1}{2}(C_\alpha^\circ) \right) 
  \end{equation*}
  since the volume measures arising from $g$ and $g_0$ are comparable on
  $C_\alpha^\circ$.  Interpolating this last estimate with the Lagrangian
  regularity with respect to the space in
  \eqref{eq:reduction-lemma-reg-1}, we find that the solution $u(t)$ is in fact
  Lagrangian with respect to the regularity space
  $x^{1 - 0} L^2 \! \left( I \times C_\alpha^\circ ;
    |\Omega|^\frac{1}{2}(C_\alpha^\circ) \right)$.
  Hence, the result of Lemma~\ref{lemma:box-relation-product-nonproduct} implies
  \begin{equation*}
    Eu(t) \in x^{-0}  L^2\!\left( I \times C_\alpha^\circ ;
      |\Omega|^\frac{1}{2}(C_\alpha^\circ) \right) \subseteq L^2 \! \left( I ;
      \calD_{-0} \right) ,
  \end{equation*}
  where the inclusion follows from Proposition~\ref{thm:domains-b-Sobolev}.
  Thus, the right-hand side of \eqref{duhamel} is contained in
  $L^\infty \!  \left( I ; \calD_{1-0} \right)$ as it solves an inhomogeneous
  equation with inhomogeneity in $L^2 \! \left( I ; \calD_{-0} \right)$,
  implying that this term is more regular than $u(t)$ by $\frac{1}{2} - 0$
  derivatives, as claimed.

  Now, if the spatial dimension is $n = 2$, we must make a slight adjustment to
  the argument above in order to apply the Morawetz estimate.  Let $\Pi_0$ be
  the projection onto the zero mode of $Y_\alpha$ (i.e., the constants):
  \begin{equation*}
    \left[\Pi_0 f\right]\!(x) = \frac{1}{\mathrm{vol}(Y_\alpha)} \int_{Y_\alpha}
    f(x,y) \, \omega_{h(0)} . 
  \end{equation*}
  We decompose $f$ into its projection onto the zero mode and the positive
  modes:
  \begin{equation*}
    f = \left( f - \Pi_0 f \right) + \Pi_0 f .
  \end{equation*}
  The Morawetz estimate argument above applies verbatim to $f - \Pi_0 f$.  For
  the final piece, $\Pi_0 f$, we note that
  $E \left( \calU_0(t) \circ \Pi_0 f \right)$ is smooth on
  $I \times C_\alpha^\circ$ since $E$ acts as a multiplication operator on the
  range of $\Pi_0$.  Hence,
  $E \left( \calU_0(t) \circ \Pi_0 f \right) \in L^\infty \! \left( I ;
    \calD_{\frac{1}{2}} \right)$, implying that
  \begin{equation*}
    [ \calU(t) - \calU_0(t) ] \circ \Pi_0 f \in L^\infty \! \left( I ;
      \calD_\frac{3}{2} \right) ,
  \end{equation*}
  an even stronger estimate than is required.  Altogether, this establishes the
  result for $n = 2$.

  When $\gamma = 1$, i.e., when we have the stronger product cone structure
  given by $h(x) - h(0) = \mathrm{O}(x^2),$ then the operator $E$ is simply a
  family of differential operators on $Y$ with smooth coefficients:  there is no
  $x^{-1}$ singularity.  This improves the Duhamel term above so that we gain a
  full derivative in solving the inhomogeneous equation for
  $\calU(t)f-\calU_0(t) f$.
\end{proof}

Finally, we return to the proof of Theorem~\ref{theorem:samesymbol}.
\begin{proof}[Proof of Theorem~\ref{theorem:samesymbol}]
  From the discussion following the statement of this theorem, we know that near
  strictly diffractively related points of
  $\bbR \times C_\alpha^\circ \times C_\alpha^\circ$ our two half-wave
  propagators $\bmU$ and $\bmU_0$ are each Lagrangian distributional
  half-densities of class $\IH^{-\frac{1}{2} - 0}$ associated to
  $N^*\{t = x + x'\}$.  Therefore, over sufficiently small open subsets
  $I \times U \times U'$ of $\bbR \times C_\alpha^\circ \times C_\alpha^\circ$
  they each have local oscillatory integral representations of the form
  \begin{equation}
    \label{eq:kernel-oscil-int-form-1}
    \int_{\bbR_\xi} e^{i (x + x' - t) \cdot \xi} \, a(t,x,y;x',y';\xi) \, d\xi ,
  \end{equation}
  where $a$ is a symbolic half-density of class
  \[S^{\frac{1}{2} + 0}_\upc L^2 \!  \left( I \times U \times U' \times \bbR_\xi
    ; \HD(U \times U') \right).\]

  Now, by Lemma~\ref{lemma:reduction} we also know that for all choices of
 initial data
  $f \in \IH^\frac{1}{2} \! \left( C_\alpha^\circ, N^*\{x = x_0\} ;
    |\Omega|^\frac{1}{2}(C_\alpha^\circ) \right)$ we have
  \begin{equation}
    \label{eq:regularity-after-testing}
    \left[ \calU(t) - \calU_0(t) \right] f \in H^{\frac{1}{2} + \gamma - 0} \! \left(
      \bbR_t \times C_\alpha^\circ ; |\Omega|^\frac{1}{2}(C_\alpha^\circ) \right) .
  \end{equation}
  These initial data have oscillatory integral representations
  \begin{equation*}
    f(x,y) \equiv \int_{\bbR_\eta} e^{- i (x - x_0) \cdot \eta} \, b(y,\eta) \,
    d\eta \left( \text{mod $\calC^\infty$} \right) 
  \end{equation*}
  with $b$ in $S^{-\frac{1}{2}} L^2 (V \times \bbR_\eta ; \HD(V))$ for some open
  $V \subseteq Y_\alpha$; this is valid locally in a neighborhood of
  $\{x_0\} \times V$ in $C_\alpha^\circ$.  Taking $a$ and $a_0$ to be the
  amplitudes in \eqref{eq:kernel-oscil-int-form-1} of $\bmU$ and $\bmU_0$
  respectively, we then compute
  \begin{multline*}
    \left( \left[ \calU(t) - \calU_0(t) \right] f \right) \! (x,y) \\
    \mbox{} = \int_{C_\alpha} \int_{\bbR_\xi} e^{i (x + x' - t) \cdot \xi} \, [ a - a_0
    ](t,x,y;x',y';\xi) \int_{\bbR_\eta} e^{-i (x' - x_0) \cdot \eta} \,
    b(y',\eta) \, d\eta d\xi dx' dy' ,
  \end{multline*}
  again up to a smooth error.  Applying stationary phase in the
  $(x',\eta)$-variables, this becomes
  \begin{equation*}
    \left( \left[ \calU(t) - \calU_0(t) \right] f \right) \! (x,y) \equiv
    \int_{\bbR_\xi} 
    e^{i (x + x_0 - t) \cdot \xi} \, e(t,x,y;\xi) \, d\xi \left( \text{mod
        $\calC^\infty$} \right) ,
  \end{equation*}
  where
  \begin{equation*}
    e(t,x,y;\xi) \equiv C \int [a - a_0](t,x,y;x_0,y';\xi) \, b(y',\xi) \, dy'
    \left( \text{mod $S^{-\frac{3}{2} + 0}_\upc L^2$} \right)
  \end{equation*}
  for a constant $C$.  In particular, $e$ is a priori an element of
  $S^{-\frac{1}{2} + 0}_\upc L^2 (W \times \bbR_\xi; \HD(W))$ for some open
  $W \subseteq \bbR_t \times C_\alpha^\circ$ by
  \eqref{eq:L2-symbol-multiplication}.  However, by
  \eqref{eq:regularity-after-testing} and an interpolation argument we know that
  $[\calU(t) - \calU_0(t)]f$ is locally an element of
  \begin{equation*}
    \IH^{\frac{1}{2} + \gamma - 0} \! \left( \bbR_t \times C_\alpha^\circ , N^*
      \{ x = t - x_0 \} ; \HD(C_\alpha^\circ) \right),
  \end{equation*}
  so $e$ is actually an element of the lower-order symbol space
  $S^{-\frac{1}{2} - \gamma + 0}_\upc L^2 (W \times \bbR_\xi; \HD(W))$.  Hence,
  Lemma~\ref{lemma:averagedsymbol} implies that
  \begin{equation*}
    a - a_0 \in S^{- \gamma + 0}_\upc (U \times \bbR_\xi; \HD(U)) ,
  \end{equation*}
  and this proves the theorem.
\end{proof}

Now we define
\begin{equation}
  \label{eq:scattering-matrix-shorthand}
  \bmD_\alpha(y,y') \defeq \calK\!\left[ e^{- i \pi \nu_\alpha} \right]\!(y,y').
\end{equation}
We employ the comparison of product and non-product metrics in 
\eqref{nonproductmetric} to 
express the principal amplitude of $\bmU$ given in
\eqref{symbolofonediffraction} in terms of the ambient (nonproduct) metric
half-density $\omega_g$. 

\begin{theorem}
  \label{thm:nonproduct-hw-amp}
  Let $p = (x,y)$ and $p' = (x',y')$ be strictly diffractively related points in
  $C_\alpha^\circ$.  Then near
  $(t,p,p') \in \bbR \times C_\alpha^\circ \times C_\alpha^\circ$, the Schwartz
  kernel $\bmU$ of the half-wave propagator $\calU(t)$ has an oscillatory
  integral representation
  \begin{equation}
    \label{eq:nonproduct-hw-o-int}
    \bmU(t,x,y;x',y') = \int_{\bbR_\xi} e^{i(x + x' - t) \cdot \xi} \,
    d(t,x,y;x',y';\xi) \, d\xi
  \end{equation}
  whose amplitude $d \in S^0$ is
  \begin{equation}
    \label{eq:nonproduct-hw-amp}
    \frac{(xx')^{-\frac{n-1}{2}}}{2\pi i} \, \cutoff(\xi) \cdot
    \bmD_\alpha(y,y') \cdot
    \Theta^{-\frac{1}{2}}(Y_\alpha \to y) \, \Theta^{-\frac{1}{2}}(y' \to
    Y_\alpha) \, \omega_g(x,y) \, \omega_g(x',y') 
  \end{equation}
  modulo elements of $S^{-\frac{1}{2} + 0}$.  Here,
  $\cutoff \in \calC^\infty(\bbR_\xi)$ is a smooth function satisfying
  $\cutoff \equiv 1$ for $ \xi > 2$ and $\cutoff \equiv 0$ for $ \xi < 1$.
\end{theorem}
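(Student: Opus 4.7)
The proof is a direct bookkeeping exercise combining three ingredients already assembled in this section: the explicit formula for $\bmU_0$ on the product cone from Corollary \ref{cor:onediffraction}, the equality of principal amplitudes of $\bmU$ and $\bmU_0$ from Theorem \ref{theorem:samesymbol}, and the conversion between the product and non-product metric half-densities furnished by equation \eqref{nonproductmetric}.

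The plan is as follows. First, I would invoke Corollary \ref{cor:onediffraction} to express $\bmU_0$ in the oscillatory integral form \eqref{eq:nonproduct-hw-o-int} with principal amplitude $\calD(x,y;x',y';\xi)$ given by \eqref{symbolofonediffraction} modulo $S^{-1/2}$; this amplitude is written using the product-cone half-density factors $\omega_{g_0}(x,y) \cdot \omega_{g_0}(x',y')$. Next, I would apply Theorem \ref{theorem:samesymbol}, which guarantees that the amplitudes of $\bmU$ and $\bmU_0$ in the same oscillatory integral representation (with phase $(x+x'-t)\xi$) agree modulo $S^{-\gamma + 0}$ for some $\gamma \geq \frac{1}{2}$. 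Since the present claim only asserts equality modulo $S^{-\frac{1}{2}+0}$, this is precisely the level of precision needed, and so the principal amplitude of $\bmU$ coincides with $\calD$ to this order.

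Finally, I would convert the half-densities. Substituting $\omega_{g_0}(p) = \Theta^{-1/2}(Y_\alpha \to p) \, \omega_g(p)$ from equation \eqref{nonproductmetric} at both the outgoing point $p = (x,y)$ and the incoming point $p' = (x',y')$ replaces the product-cone half-density factors by the ambient ones and produces the two $\Theta^{-1/2}$ factors in \eqref{eq:nonproduct-hw-amp}. Writing $\bmD_\alpha(y,y') = \calK\!\left[e^{-i\pi\nu_\alpha}\right]\!(y,y')$ as in \eqref{eq:scattering-matrix-shorthand} yields exactly the claimed formula modulo $S^{-\frac{1}{2}+0}$.

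There is no substantive analytical obstacle at this stage: all the heavy lifting has already been performed by the Duhamel--Morawetz argument underlying Theorem \ref{theorem:samesymbol} and by the Cheeger--Taylor expansion underlying Corollary \ref{cor:onediffraction}. The only task is to track the directional conventions of the $\Theta$ factors carefully---these reflect the asymmetric roles of $(x,y)$ and $(x',y')$ as the terminal and initial points of the diffractive geodesic, respectively---and to verify that the substitution into the formula for $\calD$ reproduces \eqref{eq:nonproduct-hw-amp} with the correct placement of $\Theta^{-1/2}(Y_\alpha \to y)$ and $\Theta^{-1/2}(y' \to Y_\alpha)$.
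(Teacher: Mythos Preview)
Your proposal is correct and matches the paper's own approach exactly: the paper states the theorem immediately after the sentence ``We employ the comparison of product and non-product metrics in \eqref{nonproductmetric} to express the principal amplitude of $\bmU$ given in \eqref{symbolofonediffraction} in terms of the ambient (nonproduct) metric half-density $\omega_g$,'' treating it as an immediate consequence of Corollary~\ref{cor:onediffraction}, Theorem~\ref{theorem:samesymbol}, and the half-density conversion \eqref{nonproductmetric}, with no further proof given.
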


\section{The amplitude of a multiply-diffracted wave}
\label{sec:amplitude-mult-diff}

We now return to the setting of a general conic manifold $(X,g)$.  Before
calculating the trace, we calculate the amplitude of $\calU(t)$ microlocally
along a geodesic $\gamma$ undergoing multiple (strictly) diffractive
interactions with the cone points of $X$.  Of central importance is a
calculation of the amplitude along geodesics in the interior of $X$ with a
particularly convenient choice of phase function, which we
may treat as a calculation in the smooth manifold setting.  To our knowledge, no
version of this calculation currently exists in the literature, although
many similar analyses of the propagator have been made close to the
diagonal (within the injectivity radius).

\subsection{The $\gamma$-microlocalization of the half-wave group}
\label{sec:gamma-microlocalization}

To define what we mean by the amplitude of $\calU(t)$ microlocally along a
geodesic, let us fix a (broken) geodesic segment $\gamma : [0,T] \To \Sbstar X$
whose endpoints $\gamma(0)$ and $\gamma(T)$ lie over the interior $X^\circ$ and
which undergoes $k$ strictly diffractive interactions with the cone points of
$X$.  Thus, $\gamma$ is a piecewise smooth curve in the b-cosphere bundle with
jump discontinuities at each of the boundary components
$Y_{\alpha_1},\ldots,Y_{\alpha_k}$ through which it passes (note boundary
components may repeat in this sequence).  Writing
\begin{equation*}
  \gamma^\flat \defeq \pr \mbox{} \circ \mbox{} \gamma : [0,T] \To X
\end{equation*}
for the projection of this geodesic to the base manifold, we label the endpoints
as $p_0^\flat \defeq \gamma^\flat(0)$ and $p_1^\flat \defeq \gamma^\flat(T)$.
By shortening the geodesic slightly, we may arrange that
\begin{equation}
  \label{eq:nonconjugacy-assumption-multiple-diffractions}
  \text{$p_0^\flat$ is not conjugate to $Y_{\alpha_1}$ and $p_1^\flat$ is not
    conjugate to $Y_{\alpha_k}$ along $\gamma^\flat$},
\end{equation}
the analogue of the nonconjugacy assumption
\eqref{eq:nonconjugacy-assumption-cone-points} at this stage.  We label the
segments and endpoints of $\gamma^\flat$ as follows:
\begin{enumerate}[(i)]
\item $\gamma_0^\flat$ is the segment connecting $p_0^\flat$ to
  $q_1' \in Y_{\alpha_1}$;
\item $\gamma_j^\flat$ is the segment connecting $q_j \in Y_{\alpha_j}$ to
  $q_j' \in Y_{\alpha_{j+1}}$ for $j = 1, \ldots, k-1$; and
\item $\gamma_k^\flat$ is the segment connecting $q_k \in Y_{\alpha_k}$ to
  $p_1^\flat$.
\end{enumerate}

We partition the domain of $\gamma$ as
\begin{equation*}
  0 = T_0 < T_1 < \cdots < T_{2k} < T_{2k+1} = T
\end{equation*}
so that for $m = 1, \ldots, k$ the projections of
$\gamma \!  \left(T_{2m-1} \right)$ and $\gamma \! \left(T_{2m} \right)$ to the
base each lie in the collar neighborhoods $C_{\alpha_m}^\circ$ of the boundary
component $Y_{\alpha_m}$ and so that the $m$-th diffraction of $\gamma$ occurs
between $t = T_{2m-1}$ and $t = T_{2m}$.  By perturbing such a partition
slightly, we may also arrange that none of the points $\gamma^\flat(T_m)$ in
$X^\circ$ are conjugate to one another or to the cone points along $\gamma$,
refining the assumption
(\ref{eq:nonconjugacy-assumption-multiple-diffractions}).  See
Figure~\ref{fig:broken-geodesic-and-partition} for an illustration of such a
partition.
\begin{figure}
  \centering
  \includegraphics{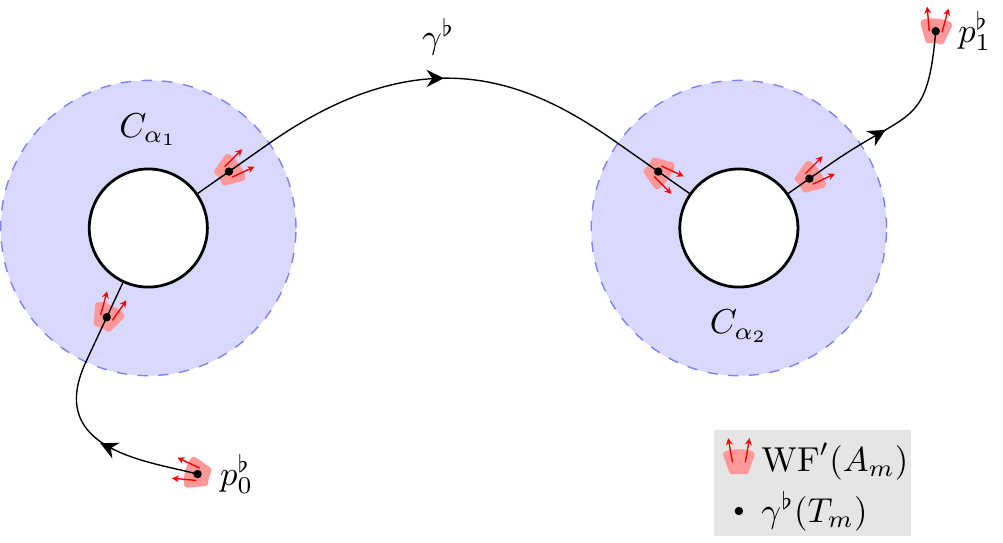}
  \caption{A projected geodesic segment $\gamma^\flat$ and its partitioning}
  \label{fig:broken-geodesic-and-partition}
\end{figure}

We define $t_m \defeq T_m - T_{m-1}$ to be the interim time between the
designated points along $\gamma$, and we choose microlocalizers
$A_m \in \Psi^0_\upc(X^\circ)$ whose microsupports are contained in sufficiently
small neighborhoods of the points $\gamma(t_m) \in S^* X^\circ$, e.g., within
$\varepsilon$-balls with respect to the metric $g$ restricted to the unit sphere
bundle for an $\varepsilon > 0$ as small as we need.

\begin{definition}
  Let $\gamma : [0,T] \To \Sbstar X$ be a broken geodesic segment partitioned as
  above.  We define the \emph{$\gamma$-microlocalization} of the half-wave
  propagator to be
  \begin{equation}
    \label{eq:gamma-microlocalization-hw}
    \calU^\gamma(t) \defeq A_{2k+1} \, \calU(t - T_{2k}) \, A_{2k} \,
    \calU(t_{2k}) \, A_{2k-1} \cdots A_1 \,\calU(t_1) \, A_0 .
  \end{equation}
\end{definition}

The rest of our efforts in this section go towards calculating the principal
amplitude of $\calU^\gamma(t)$.  We note that the factors in this operator are
of two basic types.  The first, the factors
$A_{2m+2} \circ \calU(t_{2m+2}) \circ A_{2m+1}$, microlocalize to within the
collar neighborhoods $C_{\alpha_{m+1}}$ capturing the propagation through the
cone point $Y_{\alpha_{m+1}}$.  The computation of their amplitudes is an
application of Theorem~\ref{thm:nonproduct-hw-amp} above.  The other factors, of
the form $A_{2m+1} \circ \calU(t_{2m+1}) \circ A_{2m}$, microlocalize the
propagator to within the interior $X^\circ$ and thus capture the propagation
along $\gamma$ away from the cone points.  We now calculate their amplitudes.

\subsection{The amplitude in the interior}
\label{sec:amplitude-interior}

Since the factors $A_{2m+1} \circ \calU(t_{2m+1}) \circ A_{2m}$ are only
microlocally nontrivial in a compact subset of the interior of $X$, it is
equivalent to consider the same framework in a closed, smooth manifold $Z$.
Therefore, suppose $\gamma : [0,T] \To \Tdot^*Z$ is a geodesic segment of length
$T$ with endpoints
\begin{equation*}
  \gamma(0) = (z_0,\zeta_0) \quad \text{and} \quad \gamma(T) = (z_1,\zeta_1) .
\end{equation*}
We make the analogous assumption to
\eqref{eq:nonconjugacy-assumption-multiple-diffractions} in this case:
\begin{equation}
  \label{eq:nonconjugacy-assumption-interior}
  \text{ $z_0$ and $z_1$ are not conjugate along $\gamma^\flat$ in $Z$.}
\end{equation}
This implies the existence of fiber-homogeneous neighborhoods
$U_0 \ni (z_0,\zeta_0)$ and $U_1 \ni (z_1,\zeta_1)$ in $\Tdot^*Z$ between which
the exponential map is a diffeomorphism.

Using these neighborhoods, we choose microlocalizers $A_0$ and $A_1$ in
$\Psi^0(Z)$ such that
\begin{equation*}
  (z_0,\zeta_0) \in \WF'(A_0) \subseteq U_0 \quad \text{and} \quad (z_1,\zeta_1)
  \in \WF'(A_1) \subseteq U_1 .
\end{equation*}
By choosing the microsupports of these operators sufficiently small, we may
write $A_0$ as the right quantization
of a compactly-supported symbol $a_0 \in S^0_\upc(U_0)$, and we may write $A_1$
as the left quantization
of $a_1 \in S^0_\upc(U_1)$. Composing these, we form the associated
$\gamma$-microlocalization of the half-wave group
$\calU^\gamma(t) = A_1 \, \calU(t) \, A_0$.  From the calculus of Fourier
integral operators \cite{Hormander:FIO1} and H\"ormander's result on the
structure of $\calU(t)$ \cite{Hormander:SpectralFunction}, we conclude
\begin{equation*}
  \bmU^\gamma \defeq \calK\!\left[\calU^\gamma(t)\right] \in
  I^{-\frac{1}{4}}_\upc \! \left( \bbR \times Z \times Z 
    , G^t[A_0,A_1]' ; |\Omega|^\frac{1}{2} (Z \times Z) \right) ,
\end{equation*}
where
\begin{equation*}
  G^t[A_0,A_1] \defeq \left\{(t,\tau;z,\zeta;z',\zeta') : 
    \begin{aligned}
      &\text{$\tau = |\zeta'|_g$, $(z',\zeta') \in \WF'(A_0)$,} \\
      &\text{and $(z,\zeta) = G^t(z',\zeta') \in \WF'(A_1)$}
    \end{aligned}
  \right\}
\end{equation*}
is the graph of geodesic flow from $\WF'(A_0)$ to $\WF'(A_1)$ and $(\cdot)'$
denotes the fiber-twist
$(t,\tau;z,\zeta;z',\zeta')' = (t,\tau;z,\zeta;z',-\zeta')$ making
$G^t[A_0,A_1]'$ into a Lagrangian submanifold of
$\Tdot^*(\bbR \times Z \times Z)$.

For points $(z,z')$ in $U_1^\flat \times U_0^\flat$, where $U_j^\flat$ is the
projection of $U_j$ to $Z$, we may find a variation $c^\flat : [0,t]_s \To Z$ of
$\gamma^\flat$ by a Jacobi field such that $c^\flat(0) = z'$ and
$c^\flat(t) = z$.  We then define
\begin{equation}
  \label{eq:gamma-distance}
  \dist^\gamma_g(z,z') \defeq \length(c^\flat)
\end{equation}
to be the function measuring the distance between $z'$ and $z$ ``along
$\gamma$''.  Our geometric assumptions imply that this distance function is
smooth for $(z,z') \in U_1^\flat \times U_0^\flat$, and moreover we have
\begin{equation*}
  G^t[A_0,A_1]' \subseteq N^* \! \left\{ \dist_g^\gamma(z,z') = t \right\} .
\end{equation*}

\begin{theorem}
  \label{thm:interior-ampl}
  Assume the nonconjugacy condition \eqref{eq:nonconjugacy-assumption-interior},
  and suppose $t > \varepsilon$ for some $0 < \varepsilon \ll 1$.  Provided the
  microsupports of $A_0$ and $A_1$ are chosen sufficiently small, the Schwartz
  kernel of $\calU^\gamma(t)$ on $\bbR \times Z \times Z$ has the representation
  \begin{equation}
    \label{eq:interior-gamma-microlocalization-representation}
    \bmU^\gamma(t,z,z') = \int_{\bbR_\theta} e^{i \left[ \dist^\gamma_g(z,z') -
        t \right] \theta} \, b(t,z,z',\theta) \, d\theta
  \end{equation}
  whose amplitude
  $b \in S^{\frac{n-1}{2}}_\upc \! \left( \bbR \times U_1^\flat \times U_0^\flat
    \times \bbR_\theta ; |\Omega|^\frac{1}{2} (U_1^\flat \times U_0^\flat)
  \right)$ is congruent to
  \begin{multline}
    \label{eq:interior-gamma-microlocalization-principal-amplitude}
    a_1\!\left(z,\del_z \dist_g^\gamma(z,z') \cdot \theta\right) a_0\!\left(z',
      - \del_{z'} \dist_g^\gamma(z,z') \cdot \theta\right) \\
    \mbox{} \times \frac{e^{- \frac{i\pi(n-1)}{4}} \,
      i^{-m_\gamma}}{(2\pi)^{\frac{n+1}{2}}} \cdot \frac{\chi(\theta) \,
      \theta^{\frac{n-1}{2}}}{\dist_g^\gamma(z,z')^{\frac{n-1}{2}}} \cdot
    \Theta^{-\frac{1}{2}}(z' \to z) \cdot \omega_g(z) \, \omega_g(z')
  \end{multline}
  modulo elements of $S^{\frac{n-1}{2}-1}$.  In the above,
  \begin{enumerate}[\hspace*{.25em}$\bullet$]
  \item $\chi \in \calC^\infty(\bbR_\theta)$ satisfies $\chi \equiv 0$ for
    $\theta < 1$ and $\chi \equiv 1$ for $\theta > 2$;
  \item $m_\gamma$ is the Morse index of $\gamma$; and
  \item
    $\Theta(z' \to z) \defeq \left| \det_g \left[ D \exp_{z'}(-) \right]
      \big\vert_{\exp^{-1}_{z'}(z)} \right|$
    is the determinant of the matrix representing
    $\left[ D \exp_{z'}(-) \right] \big\vert_{\exp^{-1}_{z'}(z)}$ in
    $g$-orthonormal bases of $T_{z'}Z$ and $T_zZ$.
  \end{enumerate}
\end{theorem}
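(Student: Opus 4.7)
The plan is to combine H\"ormander's description of the half-wave propagator as a Fourier integral operator with a stationary-phase reduction of phase variables, exploiting the nonconjugacy hypothesis \eqref{eq:nonconjugacy-assumption-interior}.

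First I verify that $\phi(t,z,z',\theta) \defeq [\dist^\gamma_g(z,z') - t] \theta$ is a nondegenerate phase function that parametrizes $G^t[A_0,A_1]'$ above $U_1^\flat \times U_0^\flat$. On the critical set $\{d_\theta \phi = 0\} = \{\dist^\gamma_g(z,z') = t\}$, the differentials $d_t\phi = -\theta$, $d_z\phi = \theta\, d_z \dist^\gamma_g$, and $d_{z'}\phi = \theta\, d_{z'} \dist^\gamma_g$ recover the flow-out covectors after the fiber twist, since $d_z \dist^\gamma_g$ is the outgoing unit covector at $z$ along $\gamma$ and $-d_{z'}\dist^\gamma_g$ is the incoming unit covector at $z'$. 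The nondegeneracy condition is the one-dimensional requirement $d_{t,z,z'}(d_\theta \phi) \neq 0$, which is immediate.

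Second, I compute the principal amplitude of $\bmU^\gamma$ in this parametrization. I begin from the standard local representation of $\calU(t)$ as an FIO with $n$ phase variables (the full covector at the source), whose amplitude is obtained by transport of the constant half-density along the geodesic flow. Composing with $A_1 = \Op_L(a_1)$ on the left and $A_0 = \Op_R(a_0)$ on the right inserts the factors $a_1$ and $a_0$ evaluated on the graph of the flow at the covectors $(z, \theta\, d_z\dist_g^\gamma)$ and $(z', -\theta\, d_{z'}\dist_g^\gamma)$. I then apply stationary phase in the $(n-1)$ transverse covector directions --- this is justified precisely by nonconjugacy of $z$ and $z'$ along $\gamma^\flat$, which guarantees that the transverse Hessian is nondegenerate --- leaving $\theta$ as the single remaining phase variable. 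This reduction produces the claimed features:
\begin{enumerate}[(a)]
\item the Maslov factor $i^{-m_\gamma}$, where $m_\gamma$ counts sign changes in the signature of the Hessian as one flows along $\gamma^\flat$, each pair of sign changes occurring at a conjugate point;
\item the normalization constants $e^{-i\pi(n-1)/4} \, (2\pi)^{-(n+1)/2}$ from the $(n-1)$-dimensional stationary-phase formula combined with the Fourier-transform convention of the paper;
\item the homogeneity factor $\chi(\theta) \, \theta^{(n-1)/2}$, consistent with the Lagrangian order $-\frac{1}{4}$ via $\operatorname{ord}(b) = -\frac{1}{4} + \tfrac{(2n+1) - 2}{4} = \tfrac{n-1}{2}$ for a one-phase-variable parametrization on a $(2n+1)$-dimensional manifold; and
\item the geometric factor $\Theta^{-1/2}(z' \to z) \cdot \dist_g^\gamma(z,z')^{-(n-1)/2}$ from the Jacobian of the stationary phase.
\end{enumerate}

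The main obstacle is the identification in (d) of the transverse-Hessian determinant with the invariant $\Theta(z' \to z) = \lvert \det_g D\exp_{z'}\rvert_{\exp^{-1}_{z'}(z)}$. I address this by working in Fermi normal coordinates along $\gamma$, in which the transverse variations of the exponential map are encoded by the normal block $\bsfJ^\perp(t)$ of the Jacobi endomorphism with initial data $\bsfJ(0) = 0$ and $\dot{\bsfJ}(0) = \Id$. By \eqref{eq:Jacobi-endo-exp-2}, one has $t^{-1} \bsfJ(t) = D\exp_{z'}\bigl|_{t \dot\gamma(0)}$, and the absolute value of its determinant in $g$-orthonormal frames is exactly $\Theta(z' \to z)$. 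The transverse Hessian appearing in the stationary-phase integral is, up to an overall factor of $\theta^{n-1}$, exactly this endomorphism evaluated at $t = \dist_g^\gamma(z,z')$; the rescaling by $t^{-(n-1)}$ that distinguishes $\bsfJ(t)$ from $t^{-1}\bsfJ(t)$ accounts for the factor $\dist_g^\gamma(z,z')^{-(n-1)/2}$ appearing alongside $\Theta^{-1/2}$. Carrying out this geometric identification, tracking the half-density factors at $z$ and $z'$, and confirming the remainder falls into $S^{(n-1)/2 - 1}$ completes the proof.
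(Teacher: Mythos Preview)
Your approach differs substantially from the paper's, and as written it has a gap at the point where you invoke ``the standard local representation of $\calU(t)$ as an FIO with $n$ phase variables (the full covector at the source).'' That representation---the H\"ormander parametrix with phase $\phi(t,z,\eta)-z'\cdot\eta$ solving the eikonal equation---is only valid for times short of the first caustic. Once $t$ is large enough that there are conjugate points to $z'$ along $\gamma^\flat$ (which is exactly when the Morse index $m_\gamma$ is nonzero and interesting), no single such phase function exists, and your proposed one-shot stationary phase in the $(n-1)$ transverse directions cannot be carried out directly. The nonconjugacy hypothesis \eqref{eq:nonconjugacy-assumption-interior} only concerns the \emph{endpoints}; it does not preclude intermediate conjugate points, and indeed the factor $i^{-m_\gamma}$ records them. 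Your description of the Maslov contribution as ``sign changes in the signature of the Hessian as one flows along $\gamma^\flat$'' is correct in spirit, but that is a statement about a family of Hessians along the flow, not something extracted from a single stationary-phase application; turning it into $i^{-m_\gamma}$ requires either the abstract Maslov-bundle theory of Duistermaat--H\"ormander or an inductive argument.

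The paper takes the latter route. It partitions $[0,t]$ into steps shorter than the injectivity radius with at most one conjugate point per step, so that each short-time factor admits the B\'erard--Hadamard parametrix (Lemma~\ref{thm:CdV-Berard-presentation-short-time} and Corollary~\ref{thm:short-time-reduced-Berard-principal-amplitude}) in the one-phase-variable form already. The induction step composes the known amplitude for $M$ pieces with a single short-time piece via stationary phase in $(w,\eta)$; the two key computations are Lemma~\ref{thm:int-signature}, which identifies the signature of the resulting Hessian $\bsfQ_0$ with the jump $m_\gamma-m_{\gamma_0}$ in the Morse index, and Lemma~\ref{thm:int-determinant}, which rewrites the determinant of $\bsfQ_0$ together with the intermediate $\Theta$-factors as a Wronskian of Jacobi endomorphisms and hence as $\Theta(z'\to z)$. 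Your item (d) correctly identifies the Jacobi-field mechanism behind the $\Theta$-factor, but the paper's Wronskian manipulation is what actually makes the inductive gluing work. If you want to salvage a direct (non-inductive) proof, you would need to invoke the full principal-symbol calculus for FIOs including the Maslov line bundle and then appeal to the Morse index theorem to identify its trivialization over $G^t[A_0,A_1]'$; that is a legitimate alternative but is considerably more machinery than you have written down.
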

Note that the use of $\Theta$ here to denote the determinant of the derivative of the
exponential map at a point in $X^\circ$ is consistent with the definitions we have made of the
analogous quantity at cone points above.

We prove this theorem in Section~\ref{sec:proof-theorem-int-ampl}.

\subsection{Assembling the pieces}
\label{sec:assembling-pieces}

We assemble the calculations from Sections~\ref{section:single} and
\ref{sec:amplitude-interior} to compute the principal amplitude of
$\calU^\gamma(t)$.  In the following, we let
$\Pi^\gamma_\alpha : X^\circ \To Y_\alpha$ be the map taking a point in the
interior of $X$ to the point in $Y_\alpha$ to which it is linked by a geodesic
near $\gamma^\flat$ (in the sense of small variations by cone Jacobi fields as
above), and we set the geodesic segments $\tilde\gamma_0^\flat$ and
$\tilde\gamma_k^\flat$ in $X$ to be
\begin{equation*}
  \tilde\gamma_0^\flat \defeq \gamma^\flat \big|_{[0,T_1]} \quad \text{and}
  \quad \tilde\gamma_k^\flat = \gamma^\flat \big|_{[T_{2k},T]} .
\end{equation*}

\begin{theorem}
  \label{thm:amplitude-mult-diff}
  Let
  $\calU^\gamma(t) = A_{2k+1} \, \calU(t - T_{2k}) \, A_{2k} \, \calU(t_{2k}) \,
  A_{2k-1} \cdots A_1 \,\calU(t_1) \, A_0$
  be a $\gamma$-microlocalization of the half-wave propagator undergoing $k$
  diffractions through the cone points $Y_{\alpha_1},\ldots,Y_{\alpha_k}$, no
  pair of which are conjugate.  Then the Schwartz kernel of $\calU^\gamma(t)$
  has an oscillatory integral representation
  \begin{equation}
    \label{eq:oscil-int-mult-diff}
    \bmU^\gamma \equiv \int e^{i \phi} \, b(t,z,z',\xi) \, d\xi
    \mod{\calC^\infty} 
  \end{equation}
  with phase function
  \begin{equation}
    \label{eq:phase-mult-diff}
    \phi \defeq \left[ \dist^{\gamma_k}_g(z, Y_{\alpha_{k-1}}) + \sum_{j
        = 2}^{k-1} \dist_g^{\gamma_{j-1}}(Y_{\alpha_j},Y_{\alpha_{j-1}}) +
      \dist_g^{\gamma_0}(Y_{\alpha_1},z') - t \right] \xi
  \end{equation}
  and amplitude
  $b \in S^{- \frac{k(n-1)}{2}}_\upc \! \left( \bbR_t \times U_1^\flat \times
    U_0^\flat \times \bbR_\xi ; \HD \! \left( U_1^\flat \times U_0^\flat \right)
  \right)$ given by
  \begin{multline}
    \label{eq:amplitude-mult-diff}
    \bm{a}(z,z',\xi) \cdot \frac{e^{\frac{i\pi(n-1)(k-1)}{4}} \,
      (2\pi)^{\frac{(n+1)(k-1)}{2}}}{(2\pi i)^k} \cdot \chi(\xi) \, \xi^{-
      \frac{(k-1)(n-1)}{2}} \cdot \left[ \prod_{j = 0}^k i^{-m_{\gamma_j}}
    \right] \\
    \mbox{} \times \bmD_{\alpha_k}(\Pi^{\gamma_k}_{\alpha_k}(z), q_k') \cdot
    \left[ \prod_{j = 2}^{k-1} \bmD_{\alpha_j}(q_j,q_j') \right] \cdot
    \bmD_{\alpha_1}(q_1,\Pi^{\gamma_1}_{\alpha_1}(z')) \\
    \mbox{} \times \dist_g^{\gamma_k}(z, Y_{\alpha_k})^{-\frac{n-1}{2}} \cdot
    \left[ \prod_{j = 1}^{k-1}
      \dist_g^{\gamma_j}(Y_{\alpha_{j+1}},Y_{\alpha_j})^{-\frac{n-1}{2}} \right]
    \cdot
    \dist_g^{\gamma_0}(Y_{\alpha_1},z')^{-\frac{n-1}{2}} \\
    \mbox{} \times \Theta^{-\frac{1}{2}}(z' \to Y_{\alpha_1}) \cdot \left[
      \prod_{j = 1}^{k-1} \Theta^{-\frac{1}{2}}(Y_{\alpha_j} \to
      Y_{\alpha_{j+1}}) \right] \cdot \Theta^{-\frac{1}{2}}(Y_{\alpha_k} \to z)
    \cdot \omega_g(z) \, \omega_g(z')
  \end{multline}
  modulo elements of $S^{-\frac{(k-1)(n-1)}{2} - \frac{1}{2} + 0}$.  In the
  above,
  \begin{enumerate}[\hspace*{.25em}$\bullet$]
  \item the symbol
    $\bm{a} \in S^0_\upc \!  \left(U_1^\flat \times U_0^\flat \times \bbR_\xi ;
      \HD(U_1^\flat \times U_0^\flat) \right)$
    is the combined amplitude of the microlocalizers
    $A_j \in \Psi^0_\upc(X^\circ)$,
    \begin{multline}
      \label{eq:amplitude-mult-diff-microlocalizers}
      \bm{a}(z,z',\xi) \defeq a_{2k+1} \! \left( z, \del_z
        \dist_g^{\tilde\gamma_k}(z,w_{k}) \cdot \xi \right) \\
      \mbox{} \times \left[ \prod_{j = 1}^k a_{2j} \! \left( w_{j}, - \del_{w_j}
          \dist_g(w_{j},Y_{\alpha_j}) \cdot \xi \right) \cdot a_{2j-1} \! \left(
          w_{j}', \del_{w_{j}'}
          \dist_g(Y_{\alpha_j}, w_{j}') \cdot \xi \right) \right] \\
      \mbox{} \times a_0 \! \left(z', - \del_{z'}
        \dist_g^{\tilde\gamma_0}(w_{1}',z') \cdot \xi \right)
    \end{multline}
    evaluated at the critical points
    \begin{equation*}
      \begin{aligned}
        w_1' &\defeq \exp_{z'} \! \left(
          \frac{T_1}{\dist_g^{\gamma_0}(Y_{\alpha_1},z')}
          \exp_{z'}^{-1}(Y_{\alpha_1}) \right),\\
        w_j' &\defeq \gamma_{j-1}^\flat \! \left( T_{2j-1} - \sum_{\ell =
            1}^{j-2} \dist_g^{\gamma_\ell}(Y_{\alpha_{\ell+1}} Y_{\alpha_\ell})
          - \dist_g^{\gamma_0}(Y_{\alpha_1},z') \right), \text{ $j = 2,
          \ldots, k$},\\
        w_j &\defeq \gamma_j^\flat \! \left( \defeq T_{2j} - \sum_{\ell =
            1}^{j-1} \dist_g^{\gamma_\ell}(Y_{\alpha_{\ell+1}}, Y_{\alpha_\ell})
          - \dist_g^{\gamma_0}(Y_{\alpha_1},z') \right), \text{
          $j = 1, \ldots, k -
          1$, and}\\
        w_k &\defeq \exp_z \! \left( \frac{t -
            T_{2k}}{\dist_g^{\gamma_k}(z,Y_{\alpha_k})}
          \exp_{z}^{-1}(Y_{\alpha_k}) \right) ;
      \end{aligned}
    \end{equation*}
  \item $\chi \in \calC^\infty(\bbR_\xi)$ satisfies $\chi \equiv 0$ for
    $\xi < 1$ and $\chi \equiv 1$ for $\xi > 2$;
  \item $m_{\gamma_j}$ is the Morse index of $\gamma^\flat_j$;
  \item $\bmD_\alpha(y,y')$ is the Schwartz kernel of the half-Klein-Gordon
    propagator $e^{- i \pi \nu_\alpha}$ on $Y_\alpha$; and
  \item
    $\Theta(z' \to z) \defeq \left| \left. \det_g \! \left[ D \exp_{z'}(-)
        \right] \right|_{\exp_{z'}^{-1}(z)} \right|$
    is the determinant of the matrix representing
    $\left. \left[ D \exp_{z'}(-) \right] \right|_{\exp_{z'}^{-1}(z)}$ in
    $g$-orthonormal bases of $\coneT_{z'} X$ and $\coneT_z X$.
  \end{enumerate}
\end{theorem}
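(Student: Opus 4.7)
The plan is to compute the Schwartz kernel of $\calU^\gamma(t)$ by iteratively composing Fourier integral operators, using Theorem~\ref{thm:nonproduct-hw-amp} for each diffractive factor $\calU(t_{2j})$ (acting between points in the collar of $Y_{\alpha_j}$) and Theorem~\ref{thm:interior-ampl} for each interior factor $\calU(t_{2j+1})$ (acting between interior points of $X$). Each diffractive factor contributes the phase $(x+x'-t_{2j})\xi_j$ with amplitude carrying $\bmD_{\alpha_j}(y,y')$ and the cone-flow factors $\Theta^{-1/2}(Y_{\alpha_j}\to y)\,\Theta^{-1/2}(y'\to Y_{\alpha_j})$, while each interior factor contributes the phase $[\dist^{\gamma_j}_g(z,z')-t_{2j+1}]\theta_j$ with amplitude carrying $i^{-m_{\gamma_j}}$, $\Theta^{-1/2}(z'\to z)$, and $\dist^{\gamma_j}_g(z,z')^{-(n-1)/2}$. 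The pseudodifferential microlocalizers $A_j$ act by multiplication by their symbols at the phase-space points on the Lagrangian associated to the corresponding propagator.

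Writing the full kernel of $\calU^\gamma(t)$ as a single oscillatory integral in the phase variables $(\theta_0,\xi_1,\theta_1,\ldots,\xi_k,\theta_k)$ and the $2k$ intermediate spatial integration variables, one reduces by stationary phase at each interior/cone junction. At the $j$-th such junction, stationary phase in the intermediate spatial variables near $Y_{\alpha_j}$ and in one of the adjacent phase variables forces $\theta_{j-1}=\xi_j=\theta_j$, pins the intermediate points to $\gamma^\flat$ at the travel times $w_j,w_j'$ listed in the theorem, and produces a Hessian determinant that combines the two adjacent interior $\Theta^{-1/2}$'s with the two cone-flow $\Theta^{-1/2}$'s into the single cone-to-cone factor $\Theta^{-1/2}(Y_{\alpha_{j-1}}\to Y_{\alpha_j})$. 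After the $2k$ reductions, the phase collapses to \eqref{eq:phase-mult-diff}; the Morse indices accumulate to $\prod_{j=0}^k i^{-m_{\gamma_j}}$; the $\bmD_{\alpha_j}$ kernels and distance factors are retained verbatim; and the $A_j$'s contribute the combined microlocalizer symbol $\bm{a}$ in \eqref{eq:amplitude-mult-diff-microlocalizers} evaluated at the stated critical points. The overall constant $(2\pi)^{(n+1)(k-1)/2}e^{i\pi(n-1)(k-1)/4}/(2\pi i)^k$, the symbolic order $-k(n-1)/2$, and the power $\xi^{-(k-1)(n-1)/2}$ then follow from bookkeeping of the $2k$ stationary-phase Gaussians together with the orders supplied by the two input theorems.

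The main obstacle is the cone-point Hessian calculation: one must verify that the stationary-phase determinant at each junction merges the four adjacent $\Theta^{-1/2}$'s into exactly $\Theta^{-1/2}(Y_{\alpha_{j-1}}\to Y_{\alpha_j})$. This reduces to a Jacobi-field identity relating the b-Jacobi fields emanating from one cone point to those reaching the next via the intervening interior geodesic segment, and may be established using the Wronskian characterization of $\Theta(Y_\alpha\to Y_\beta)$ discussed in Section~\ref{section:Jacobi}---effectively, the tangency-of-flowouts interpretation alluded to after \eqref{ThetaYY} is exactly what makes the four factors telescope. The subprincipal error of order $S^{-(k-1)(n-1)/2-1/2+0}$ is controlled by propagating the subprincipal errors of the input amplitudes through the composition, and the nonconjugacy hypotheses ensure nondegeneracy of each Hessian along the way.
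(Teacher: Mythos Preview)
Your proposal is correct and follows essentially the same approach as the paper: both compose the interior and diffractive propagators via their oscillatory integral representations from Theorems~\ref{thm:interior-ampl} and~\ref{thm:nonproduct-hw-amp} and reduce by stationary phase at each junction, with the key Hessian/determinant computation handled by the Jacobi-field Wronskian identities of Section~\ref{section:Jacobi} (specifically the analogues of Lemmas~\ref{thm:int-signature} and~\ref{thm:int-determinant} with cone Jacobi fields replacing smooth ones). The paper organizes this as an explicit induction on $k$ with the base case $k=1$ isolated as a lemma, whereas you describe the full iterated reduction in one pass, but the content is the same.
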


We prove this theorem in Section~\ref{sec:proof-theorem-mult-diff}.



\section{Proof of Theorem \ref{thm:interior-ampl}}
\label{sec:proof-theorem-int-ampl}

We begin by fixing an element
$(t,z,z') \in \bbR_+ \times U_1^\flat \times U_0^\flat$ at which we want to
compute the representation
\eqref{eq:interior-gamma-microlocalization-representation} of $\calU^\gamma(t)$.
If $(t,z,z')$ is not the image of a point
$(t,\tau;z,\zeta; z',\zeta') \in G^t[A_0,A_1]$ under the projection map, then
the amplitude of \eqref{eq:interior-gamma-microlocalization-representation} is
residual (i.e., in $S^{-\infty}$) there.  Therefore, we restrict to those
$(t,z,z')$ which are in this projection.  To each such point corresponds a
geodesic $c : [0,t]_s \To \Tdot^*Z$ with $c(0) = (z',\zeta')$ and
$c(t) = (z,\zeta)$ which we use to compute the amplitude.  We partition the
domain of $c$ as
\begin{equation*}
  0 = t_0 < t_1 < \cdots < t_{M-1} < t_M = t,
\end{equation*}
with these times chosen so that
\begin{enumerate}[(i)]
\item the interim times $s_m \defeq t_m - t_{m-1}$ are each less than the
  injectivity radius $\inj(Z,g)$;
\item for $m = 1,\ldots,M-1$, none of the points $c^\flat(t_m)$ is conjugate to
  $c^\flat(0)$;
\item and for $m = 1,\ldots,M-1$ there is at most one point between
  $c^\flat(t_{m})$ and $c^\flat(t_{m+1})$ along $c^\flat$ which is conjugate to
  $c^\flat(0)$.
\end{enumerate}
Using this partition and the group property, we decompose the operator as
\begin{equation}
  \label{eq:int-ml-decomp}
  \calU^\gamma(s) = A_1 \, \calU(s - t_{M-1}) \, \calU(s_{M-1}) \cdots
  \calU(s_2) \, \calU(s_1) \, A_0 ,
\end{equation}
a microlocalized composite of short-time propagators analogous to
\eqref{eq:gamma-microlocalization-hw}.

We prove Theorem~\ref{thm:interior-ampl} by induction on $M$, the number of
short-time propagators required in this decomposition of $\calU^\gamma(t)$
corresponding to the point $(t,z,z')$.  The base case is $M = 1$, corresponding
to the $\gamma$-microlocalizations of a single short-time propagator.  The core
of this is the following.

\begin{lemma}
  \label{thm:CdV-Berard-presentation-short-time}
  Let $I_0$ and $W_0$ be the open sets
  \begin{equation*}
    I_0 \defeq \left\{ 0 < t < \inj(Z) \right\} \subseteq \bbR_t \quad
    \text{and} \quad W_0 \defeq \left\{ \dist_g(z,z') < \inj(Z) \right\}
    \subseteq Z \times Z .
  \end{equation*}
  Then for $(t,z,z') \in I_0 \times W_0$, the Schwartz kernel of $\calU(t)$ has
  the representation
  \begin{equation}
    \label{eq:short-time-Berard-presentation}
    \bmU(t,z,z') \equiv \int_{\bbR_\theta} e^{i
      \left[ \dist_g(z,z') - t \right] \theta} \, b^\natural(t,z,z',\theta) \,
    d\theta \mod{\calC^\infty}
  \end{equation}
  whose amplitude
  $b^\natural \in S^{\frac{n-1}{2}} \! \left( I_0 \times W_0 \times \bbR_\theta
    ; |\Omega|^\frac{1}{2} \! (W_0) \right)$ is to leading order
  \begin{equation}
    \label{eq:short-time-Berard-principal-amplitude}
    \frac{t \, e^{-
        \frac{i\pi(n-1)}{4}}}{\pi^{\frac{n + 
          1}{2}} \left[ \dist_g(z,z') + t \right]^{\frac{n+1}{2}}} \cdot
    \chi(\theta) \, \theta^{\frac{n-1}{2}} \cdot \Theta^{-\frac{1}{2}}(z \to z')
    \cdot \omega_g(z) \, \omega_g(z')
  \end{equation}
  modulo elements of $S^{\frac{n-1}{2}-1}$.  The quantities here are the same as
  in Theorem~\ref{thm:interior-ampl}.
\end{lemma}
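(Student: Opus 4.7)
My plan is to obtain \eqref{eq:short-time-Berard-presentation}--\eqref{eq:short-time-Berard-principal-amplitude} by constructing an explicit Hadamard--B\'erard parametrix for $\calU(t)$ inside the injectivity radius and then rewriting it as the desired oscillatory integral. The classical Hadamard construction provides a parametrix for $\cos(t\sqrt{\Delta_g})$ and $\sin(t\sqrt{\Delta_g})/\sqrt{\Delta_g}$ in the region $\{\dist_g(z,z') < \inj(Z), \ 0 < t < \inj(Z)\}$ in terms of the Hadamard coefficients $U_j(z,z')$ solving transport equations along the radial geodesics from $z'$; the leading coefficient is the Van Vleck--Morette determinant $U_0(z,z') = \Theta^{-1/2}(z \to z')$. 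Combining via $e^{-it\sqrt{\Delta_g}} = \cos(t\sqrt{\Delta_g}) - i\sin(t\sqrt{\Delta_g})$ and doing a Fourier representation of the scalar distributions in $(\dist_g^2(z,z')-t^2)$ (or equivalently in $\dist_g(z,z')-t$, which suffices modulo lower order terms since $\dist_g + t$ is smooth and nonvanishing) gives \eqref{eq:short-time-Berard-presentation}.

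The first thing to verify is that $\phi(t,z,z',\theta) = [\dist_g(z,z')-t]\theta$ is a nondegenerate clean phase function parametrizing the incoming-half of the geodesic-flow Lagrangian $G^t$ on $I_0 \times W_0$. Within the injectivity radius $\dist_g$ is smooth, the eikonal equation $|\nabla_z\dist_g(z,z')|_g = 1$ holds, and the critical set of $\phi$ in $\theta$ is $\{\dist_g(z,z') = t\}$, on which $d_{z,z',t}\phi = \theta(d_z\dist_g \, dz - d_{z'}\dist_g \, dz' - dt)$, which parametrizes exactly the forward geodesic flow. Standard counting gives that a Lagrangian distribution of order $-\tfrac{1}{4}$ on the $(2n+1)$-dimensional space $\bbR\times Z \times Z$, with one phase variable, has amplitude of order $-\tfrac{1}{4} + \tfrac{2n+1}{4} - \tfrac{1}{2} = \tfrac{n-1}{2}$, matching the claimed class.

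To pin down the principal amplitude, I will compute the principal symbol intrinsically on $G^t$ and then read off the amplitude via the standard formula from Appendix~\ref{sec:lagrangian-dists-amplitudes}. The principal symbol of $\calU(t)$ is the parallel transport along the Hamilton flow of $\sqrt{g}$ of the initial symbol at $t = 0$, and Liouville transport of half-densities on the Lagrangian produces precisely the factor $\Theta^{-1/2}(z \to z')$ together with the two metric half-densities $\omega_g(z)\,\omega_g(z')$; the Maslov/Morse factor $i^{-m_\gamma}$ equals $1$ for $t < \inj(Z)$ since no conjugate points are encountered. The remaining numerical constant $e^{-i\pi(n-1)/4}/\pi^{(n+1)/2}$ together with the factor $t\,[\dist_g(z,z')+t]^{-(n+1)/2}\,\chi(\theta)\,\theta^{(n-1)/2}$ arises from translating the intrinsic half-density on $G^t$ into an amplitude in the $\theta$-variable: writing $\dist_g^2 - t^2 = (\dist_g+t)(\dist_g-t)$ and applying a stationary-phase-type change of variables in the Fourier representation of the relevant Riesz-type distributions. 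I will verify the overall constant by matching with the Euclidean model $Z = \bbR^n$ (where $\Theta \equiv 1$, $\dist_g(z,z') = |z-z'|$ and the kernel of $e^{-it|D|}$ is explicit), and invoke the uniqueness of the principal amplitude modulo $S^{(n-1)/2-1}$ to conclude.

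The main obstacle is bookkeeping: keeping consistent track of the half-density factors, Jacobian conventions, and sign/orientation data as one passes from the intrinsic symbol on $G^t$ to the concrete scalar amplitude $b^\natural$ attached to the phase $\phi$. This is conceptually routine but easy to botch by a factor of $i^{\pm 1}$ or $\pi^{\pm 1/2}$, so verification against the Euclidean model is essential. Once the short-time base case is established, Theorem~\ref{thm:interior-ampl} follows by inducting over the partition $0 = t_0 < \cdots < t_M = t$ via the composition calculus of Fourier integral operators, with the Morse index $m_\gamma$ accumulating as an $i^{-1}$ factor each time the microlocalized propagator is composed through a conjugate point between $c^\flat(t_m)$ and $c^\flat(t_{m+1})$, and the $\Theta^{-1/2}$ factors telescoping along the chain to yield $\Theta^{-1/2}(z' \to z)$ as stated.
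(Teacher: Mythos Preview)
Your proposal is correct and follows essentially the same approach as the paper: both derive \eqref{eq:short-time-Berard-presentation} from B\'erard's Hadamard parametrix, convert the phase from $\dist_g^2(z,z') - t^2$ to $\dist_g(z,z') - t$ via the factorization and a linear change of phase variable, and fix the overall constant by matching against the explicit Euclidean kernel. The only cosmetic difference is that the paper works with the antidifferentiated propagator $e^{it\sqrt{\Delta}}/\sqrt{\Delta}$ (quoting B\'erard's formula directly, where $U_0 = \Theta^{-1/2}$ already appears as the leading Hadamard coefficient) and then differentiates in $t$, rather than combining the $\cos$ and $\sin$ propagators and invoking Liouville transport of half-densities to identify the $\Theta^{-1/2}$ factor.
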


\begin{proof}
  The oscillatory integral representation
  \eqref{eq:short-time-Berard-presentation} is a modification of B\'erard's
  Hadamard parametrix construction for the sine propagator $\mathbf{W}(t)$ on
  functions from \cite{Ber}.  Rephrasing B\'erard's calculation in terms of the
  antidifferentiated half-wave propagator
  $\underdot{\bmU} \defeq \calK\!\left[ \frac{e^{i t \sqrt\Delta}}{\sqrt\Delta}
  \right]$, we have the \emph{exact} equality
  \begin{equation}
    \label{eq:Berard-antidiff-half-wave}
    \underdot{\bmU}(t,z,z') = \int_{\bbR_\eta} e^{i \left[ \dist^2_g(z,z') - t^2
      \right] \eta} 
    \cdot C_0 \, e^{- \frac{i \pi (n-1)}{2}} \left\{ \sum_{k = 0}^\infty
      U_k(z,z') \, \eta_+^{\frac{n-3}{2}-k} \right\} \omega_g^2(z') \, d\eta
  \end{equation}
  for $(t,z,z') \in \left\{ |t| < \inj(Z) \right\} \times W_0$, where $C_0$ is a
  constant depending only on the dimension $n$ and
  $U_0(z,z') = \Theta^{-\frac{1}{2}}(z \to z')$.  (The lower-order $U_k$'s are
  all explicit in terms of the geometry of $Z$; see (11) and (13) in
  \cite{Ber}.)

  We transform this into a representation for $\bmU$ by differentiating with
  respect to $t$, dividing by $i$, and replacing $t$ by $-t$.  This yields
  \begin{equation*}
    \bmU(t,z,z') = \int_{\bbR_\eta} e^{i \left[ \dist^2_g(z,z') - t^2 \right] \eta}
    \cdot 2C_0 \, t \, e^{- \frac{i \pi (n-1)}{4}} \left\{ \sum_{k = 0}^\infty
      U_k(z,z') \, \eta_+^{\frac{n-1}{2} - k} \right\} \omega_g^2(z') \, d\eta .
  \end{equation*}
  We now restrict to $t \in I_0$ so that $t > 0$.  After introducing the new
  phase variable $\theta = \left[ \dist_g(z,z') + t \right] \eta$, which we note
  is a positive multiple of the original phase variable $\eta$, this expression
  becomes
  \begin{multline*}
    \bmU(t,z,z') = \int_{\bbR_\theta} e^{i \left[ \dist_g(z,z') - t \right]
      \theta} \cdot 2 C_0 \, t \, e^{-
      \frac{i \pi (n-1)}{4}} \\
    \times \left\{ \sum_{k = 0}^\infty U_k(z,z') \cdot
      \frac{\theta_+^{\frac{n-1}{2}}}{ \left[ \dist_g(z,z') + t
        \right]^{\frac{n+1}{2}} } \cdot \frac{\theta_+^{-k}}{ \left[
          \dist_g(z,z') + t \right]^{-k}} \right\} \omega_g^2(z') \, d\theta .
  \end{multline*}
  To convert this to the Schwartz kernel of the operator acting between
  half-densities, we multiply by the factor $\omega_g(z) \, \omega_g^{-1}(z')$.
  Modulo the calculation of the constant $C_0$, this yields the desired
  representation \eqref{eq:short-time-Berard-presentation} once we insert the
  cutoff $\chi$ localizing in $\theta > 1$ (producing an overall smooth, and
  thus microlocally negligible, error).

  To finish, we briefly indicate how to calculate the constant $C_0$.  Starting
  with the classical expression for the antidifferentiated half-wave kernel on
  $\bbR^n$,
  \begin{equation*}
    \underdot{\bmU} (t,z,z') =
    \frac{\Gamma\!\left( \frac{n-1}{2} \right)}{2 \pi^{\frac{n+1}{2}}}
    \lim_{\varepsilon \downarrow 0} \left[ |z - z'|^2 - (t - i \varepsilon)^2
    \right]^{- \frac{n-1}{2}} |dz'| ,
  \end{equation*}
  we obtain the oscillatory integral representation
  \begin{equation*}
    \underdot{\bmU} (t,z,z') =
    \frac{e^{- \frac{i\pi(n-1)}{4}}}{2 \cdot \pi^{\frac{n+1}{2}}} \int_{\theta =
      0}^\infty e^{i \left[ |z - z'|^2 - (t - i 0)^2 \right] \theta } \,
    \theta^{\frac{n-3}{2}} \, |dz'| \, d\theta
  \end{equation*}
  using the distributional identity
  \begin{equation*}
    \int_{\theta = 0}^\infty e^{i (u + i 0) \theta} \, \theta^\alpha \, d\theta
    = \Gamma(\alpha + 1) \, e^{\frac{- i \pi (\alpha + 1)}{4}} \, (u + i
    0)^{-(\alpha + 1)} , \qquad u \in \bbR. 
  \end{equation*}
  Comparing this with \eqref{eq:Berard-antidiff-half-wave}, we see that
  $C_0 = \frac{1}{2} \pi^{-\frac{n+1}{2}}$, concluding the proof.
\end{proof}

\begin{corollary}
  \label{thm:short-time-reduced-Berard-principal-amplitude}
  Choose $0 < \varepsilon \ll 1$, and suppose
  $(t,z,z') \in \left(\varepsilon, \inj(Z) \right) \times W_0$.  Then there is a
  representation
  \begin{equation*}
    \bmU(t,z,z') \equiv \int_{\bbR_\theta} e^{i \left[ \dist_g(z,z') - t \right]
      \theta} \, b(z,z',\theta) \, d\theta \mod{\calC^\infty} 
  \end{equation*}
  whose amplitude
  $b \in S^{\frac{n-1}{2}} \! \left( W_0 \times \bbR_\theta ;
    |\Omega|^{\frac{1}{2}}(W_0) \right)$ is to leading order
  \begin{equation}
    \label{eq:short-time-reduced-Berard-principal-amplitude}
    \frac{e^{- \frac{i\pi(n-1)}{4}}}{(2\pi)^{\frac{n+1}{2}}} 
    \cdot \frac{\chi(\theta) \, \theta^{\frac{n-1}{2}}}{\dist_g(z,z')^{\frac{n-1}{2}}}
    \cdot \Theta^{-\frac{1}{2}}(z \to z') \cdot \omega_g(z) \, \omega_g(z')
    \mod{S^{\frac{n-1}{2}-1}} . 
  \end{equation}
\end{corollary}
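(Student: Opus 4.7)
The plan is to start from the representation \eqref{eq:short-time-Berard-presentation} given by Lemma \ref{thm:CdV-Berard-presentation-short-time} and simplify the amplitude by using the freedom to modify it by terms vanishing on the Lagrangian submanifold $\{t = \dist_g(z,z')\}$. Since the principal amplitude of a Lagrangian distribution is determined only by the restriction of any representing amplitude to the Lagrangian, we should be able to replace the factor of $t$ (wherever it appears outside the phase) by $\dist_g(z,z')$ at the cost of an error of lower order.

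Concretely, I would write $b^\natural(t,z,z',\theta) - b^\natural(\dist_g(z,z'),z,z',\theta) = (t - \dist_g(z,z')) \cdot r(t,z,z',\theta)$ for a smooth function $r$ obtained by Taylor expanding $b^\natural$ in its first slot around $t = \dist_g(z,z')$. Since $r$ is of the same symbol order $\tfrac{n-1}{2}$ as $b^\natural$, an integration by parts in $\theta$ (using $(t-\dist_g)e^{i[\dist_g - t]\theta} = -\tfrac{1}{i}\pa_\theta e^{i[\dist_g - t]\theta}$) converts this error into an oscillatory integral with amplitude $\tfrac{1}{i}\pa_\theta r$, which lies in $S^{\frac{n-1}{2}-1}$. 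This gives the reduction
\begin{equation*}
\int_{\bbR_\theta} e^{i[\dist_g(z,z') - t]\theta}\, b^\natural(t,z,z',\theta)\, d\theta \equiv \int_{\bbR_\theta} e^{i[\dist_g(z,z') - t]\theta}\, b^\natural(\dist_g(z,z'),z,z',\theta)\, d\theta
\end{equation*}
modulo a symbol of order $\tfrac{n-1}{2} - 1$ (and a smooth error from the $\chi$ cutoff, which is harmless for $t > \varepsilon$).

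It then remains to evaluate \eqref{eq:short-time-Berard-principal-amplitude} at $t = \dist_g(z,z')$. The prefactor becomes
\begin{equation*}
\frac{\dist_g(z,z') \cdot e^{-\frac{i\pi(n-1)}{4}}}{\pi^{\frac{n+1}{2}} \bigl[2\dist_g(z,z')\bigr]^{\frac{n+1}{2}}} = \frac{e^{-\frac{i\pi(n-1)}{4}}}{(2\pi)^{\frac{n+1}{2}} \cdot \dist_g(z,z')^{\frac{n-1}{2}}},
\end{equation*}
using $2^{\frac{n+1}{2}} \pi^{\frac{n+1}{2}} = (2\pi)^{\frac{n+1}{2}}$. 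Combined with the other factors, this yields exactly \eqref{eq:short-time-reduced-Berard-principal-amplitude}.

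The only mild subtlety is checking that the $\chi(\theta)$ cutoff and the restriction $t > \varepsilon$ do not introduce singular terms: the assumption $t > \varepsilon$ keeps $t$ bounded away from zero so that dividing by $[\dist_g(z,z') + t]^{\frac{n+1}{2}}$ produces a smooth function of $(t,z,z')$, and the $\chi(\theta)$ localizes to $\theta \geqslant 1$ where the symbol estimates are straightforward. I expect the main (minor) obstacle is keeping track of the Taylor remainder argument precisely enough to conclude that the lower-order error indeed lies in the symbol class $S^{\frac{n-1}{2}-1}$, which is standard but should be verified carefully since it is the mechanism by which the $t$-dependence of the amplitude is absorbed into sub-principal data.
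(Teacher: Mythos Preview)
Your proof is correct and is essentially the same as the paper's: the paper simply introduces the variable $s = \dist_g(z,z') - t$ and invokes Lemma~18.2.1 of \cite{HorIII}, whose proof is precisely the Taylor-expansion-plus-integration-by-parts argument you have written out. The evaluation of the prefactor at $t = \dist_g(z,z')$ is exactly as you computed.
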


\begin{proof}
  This follows from Lemma~\ref{thm:CdV-Berard-presentation-short-time} by
  introducing $s = \dist_g(z,z') - t$ into
  \eqref{eq:short-time-Berard-principal-amplitude} to replace the $t$-variable
  and applying Lemma 18.2.1 of \cite{HorIII}.
\end{proof}

To finish the base case we must introduce the microlocalizers $A_0$ and $A_1$
and compute the amplitude of $\calU^\gamma(t) = A_1 \, \calU(t) \, A_0$.  By the
above, we may write $\bmU^\gamma$ as
\begin{equation*}
  \int e^{i \psi} \, a_1(z,\zeta) \, b(z',z'',\theta) \, a_0(z''',\zeta''') \,
  d\theta d\zeta d\zeta''' dz' dz''
\end{equation*}
with
$\psi = (z - z') \cdot \zeta + \left[ \dist_g(z',z'') - t \right] \theta + (z''
- z''') \cdot \zeta'''$.
Applying the method of stationary phase in the variables
$(\zeta,\zeta''',z',z'')$ then yields this case's version of the formula
\eqref{eq:interior-gamma-microlocalization-principal-amplitude} for the
principal part of the amplitude.

We now move on to the induction step in the proof.  We assume that the
representation
\eqref{eq:interior-gamma-microlocalization-representation}--\eqref{eq:interior-gamma-microlocalization-principal-amplitude}
holds for all $\gamma$-microlocalizations of $\calU(t)$ at all points $(t,z,z')$
whose associated decomposition is into $M$ short-time propagators.  (In
particular, we assume this for all microlocalizers $A_0$ and $A_1$.)  We shall
show this representation also holds at those points requiring a decomposition into
$M+1$ short-time propagators.  Thus, let $\calU^\gamma(t)$ be a
$\gamma$-microlocalization of $\calU(t)$, and let $(t,z,z')$ be a point in the
kernel spacetime with associated decomposition
\begin{equation*}
  \calU^\gamma(t) = A_1 \, \calU(t - t_M) \, \calU(s_M) \, \cdots \calU(s_1)
  \, A_0 .
\end{equation*}
Recalling that $t_M = s_1 + \cdots + s_M$, choose $A \in \Psi^0_\upc(Z)$ to be a
microlocalizer whose symbol $a$ is identically $1$ on the set
\begin{equation*}
  G^{t_M} \! \left[ \WF'(A_0) \right] \cap G^{-(t - t_{M})} \! \left[ \WF'(A_1)
  \right] 
\end{equation*}
and which is microsupported in a small neighborhood of this set.  We may then write
\begin{equation*}
  \calU^\gamma(t) \equiv \left[ A_1 \, \calU(t - t_M) A \right] \circ \left[ A
    \, \calU(s_M) \cdots \calU(s_1) \, A_0 \right] \mod{\calC^\infty} .
\end{equation*}

If we define $\gamma_0$ and $\gamma_1$ to be the geodesics
\begin{equation*}
  \gamma_0 \defeq \gamma \big\vert_{[0,t_M]} \qquad \text{and} \qquad
  \gamma_1 \defeq \gamma \big\vert_{[t_M,T]}
\end{equation*}
and shrink the microsupports of $A_0$ and $A_1$ if necessary, then we may
arrange for $A \, \calU(s_M) \cdots \calU(s_1) A_0$ to be the decomposition of
$\calU^{\gamma_0}(t_M)$ associated to the point $(t_M,c^\flat(t_M),z')$ and for
$A_1 \, \calU(t - t_M) \, A$ to be the decomposition of
$\calU^{\gamma_1}(t-t_M)$ associated to $(t-t_M,z,c^\flat(t_M))$.  Since each
consists of fewer than $M+1$ short-time propagators, both of these operators
satisfy the induction hypothesis, and thus the expressions
\eqref{eq:interior-gamma-microlocalization-representation}-\eqref{eq:interior-gamma-microlocalization-principal-amplitude}
hold for each.  This implies that
\begin{equation}
  \label{eq:int-composite-SK}
  \bmU^\gamma (t,z,z') \equiv \int_{Z_w}
  \int_{\bbR_\theta} \int_{\bbR_\eta} e^{i \Phi} \,
  b_1(z,w,\eta) \, b_0(w,z',\theta) \, d\eta d\theta \mod{\calC^\infty} ,
\end{equation}
where the phase function is
\begin{equation*}
  \Phi = \left[ \dist_g(z,w) - (t - t_M) \right] \eta + \left[
    \dist_g^{\gamma_0} (w,z') - t_M \right] \theta
\end{equation*}
and the amplitudes satisfy
\begin{multline*}
  b_0(w,z',\theta) \equiv a_{\mathrm{L}}\!\left(w,\del_w \dist_g(w,z') \cdot
    \eta\right) a_0\!\left(z', -
    \del_{z'} \dist_g(w,z') \cdot \eta\right) \\
  \mbox{} \times \frac{e^{- \frac{i\pi(n-1)}{4}}}{(2\pi)^{\frac{n+1}{2}}} \cdot
  \frac{\chi(\eta) \, \eta^{\frac{n-1}{2}}}{\dist_g(w,z')^{\frac{n-1}{2}}} \cdot
  \Theta^{-\frac{1}{2}}(w \to z') \cdot \omega_g(w) \, \omega_g(z')
  \mod{S^{\frac{n-1}{2}-1}}
\end{multline*}
and
\begin{multline*}
  b_1(z,w,\theta) \equiv a_1\!\left(z,\del_z \dist_g^{\gamma_0}(z,w) \cdot
    \theta\right) a_\mathrm{R}\!\left(w, -
    \del_{w} \dist_g^{\gamma_0}(z,w) \cdot \theta\right) \\
  \mbox{} \times \frac{e^{- \frac{i\pi(n-1)}{4}} \,
    i^{-m_{\gamma_0}}}{(2\pi)^{\frac{n+1}{2}}} \cdot \frac{\chi(\theta) \,
    \theta^{\frac{n-1}{2}}}{\dist_g^{\gamma_0}(z,w)^{\frac{n-1}{2}}} \cdot
  \Theta^{-\frac{1}{2}}(z \to w) \cdot \omega_g(z) \, \omega_g(w)
  \mod{S^{\frac{n-1}{2}-1}} .
\end{multline*}
Here, $a_{\mathrm{L}}$ and $a_{\mathrm{R}}$ are the left and right symbols of
$A$ respectively, and we note there is a density factor in the $w$-variable so
that the integration makes sense.

We now apply the method of stationary phase to the integral
\eqref{eq:int-composite-SK} in the $(\eta,w)$-variables.  $\Phi$ is critical in
these variables precisely when
\begin{equation}
  \label{eq:int-critical-set}
  \begin{aligned}
    \del_\eta \Phi &= \dist_g(z,w) - (t - t_M) = 0 \\
    \del_w \Phi &= \eta \cdot \del_w \dist_g(z,w) + \theta \cdot \del_w
    \dist_g^{\gamma_0}(w,z') = 0 ;
  \end{aligned}
\end{equation}
on the support of the amplitude, this is where
\begin{itemize}
\item $\dist_g(z,w) = t - t_M$;
\item $\eta = \theta$; and
\item $\del_w \dist_g(z,w) = - \del_w \dist_g^{\gamma_0}(w,z')$, implying that
  $w$ lies on the geodesic $c^\flat$ between $z'$ and $z$.
\end{itemize}
Therefore, the critical set consists of the single point
$w_* \defeq c^\flat(t_M)$.  Hence, the expression \eqref{eq:int-composite-SK} is
equivalent to an oscillatory integral of the form
\begin{equation}
  \label{eq:int-composite-SK-statphase}
  \int_{\bbR_\theta} e^{i \left[ \dist^\gamma(z,z') - t \right] \theta}
  b(z,z',\theta) \, d\theta
\end{equation}
modulo smooth kernels.  The amplitude $b \in S^{\frac{n-1}{2}}$ satisfies
\begin{multline}
  \label{eq:int-composite-SK-statphase-princ-amp}
  b(z,z',\theta) \\
  \mbox{} \equiv \left. (2\pi)^{\frac{n+1}{2}} \, e^{\frac{i \pi
        \sgn(\bsfQ)}{4}} \, b_1(z,w,\eta) \, b_0(w,z',\theta) \left| \det(\bsfQ)
    \right|^{-\frac{1}{2}} \right|_* \mod{S^{\frac{n-1}{2}-1}} ,
\end{multline}
where $\bsfQ$ is the matrix
\begin{equation}
  \label{eq:int-composite-SK-statphase-quadform}
  \bsfQ \defeq 
  \begin{bmatrix}
    0 & \del_w \dist_g(z,w) \\
    \del_w \dist_g(z,w) & \eta \cdot \nabla_w^2 \dist_g(z,w) + \theta \cdot
    \nabla_w^2 \dist_g^{\gamma_0}(w,z')
  \end{bmatrix}
\end{equation}
and $(-)\big\vert_*$ denotes restriction to the critical set described above.

Observe that
$\left. \left| \det(\bsfQ) \right|^{-\frac{1}{2}} \right|_* =
\theta^{-\frac{n-1}{2}} \left| \det(\bsfQ_0) \right|^{-\frac{1}{2}}$,
where $\bsfQ_0$ is a matrix no longer depending on phase variables:
\begin{equation}
  \label{eq:int-composite-SK-statphase-quadform2}
  \bsfQ_0 \defeq 
  \begin{bmatrix}
    0 & \del_w \dist_g(z,w_*) \\
    \del_w \dist_g(z,w_*) & \nabla_w^2 \dist_g(z,w_*) + \nabla_w^2
    \dist_g^{\gamma_0}(w_*,z')
  \end{bmatrix}.
\end{equation}
Since $\theta > 0$ on the support of the amplitude, the signatures of
$\bsfQ \big\vert_*$ and $\bsfQ_0$ also agree.  Substituting in this together
with the principal parts of $b_0$ and $b_1$ and noting that $a$ is identically
$1$ on the critical set, \eqref{eq:int-composite-SK-statphase-princ-amp} becomes
\begin{multline}
  \label{eq:int-composite-SK-statphase-princ-amp-2}
  b(z,z',\theta) \equiv a_1(z, \del_z \dist^\gamma_g(z,z') \cdot \theta) \,
  a_0(z', - \del_z' \dist_g^\gamma(z,z') \cdot \theta) \\
  \mbox{} \times \frac{e^{\frac{i \pi \sgn(\bsfQ_0)}{4}} \, e^{-\frac{i \pi
        (n-1)}{2}} \, i^{- m_{\gamma_0}}}{(2\pi)^{\frac{n+1}{2}}} \cdot
  \chi(\theta)^2 \,\theta^{\frac{n-1}{2}} \cdot \omega_g(z) \,
  \omega_g(z') \\
  \times \frac{\omega_g^2(w_*) \cdot \Theta^{-\frac{1}{2}} (z \to w_*) \,
    \Theta^{-\frac{1}{2}} (w_* \to z')}{|dw| \cdot
    \dist_g^{\gamma_0}(z,w_*)^{\frac{n-1}{2}} \cdot
    \dist_g(w_*,z')^{\frac{n-1}{2}}} \left| \det(\bsfQ_0) \right|^{-\frac{1}{2}}
  \mod{S^{\frac{n-1}{2}-1}} .
\end{multline}

We compute the signature of $\bsfQ_0$ via the following lemma.


\begin{lemma}
  \label{thm:int-signature}
  Let $c^\flat : [0,T] \To Z$ be a geodesic in $Z$ with endpoints
  $z_1 \defeq c^\flat(0)$ and $z_2 \defeq c^\flat(T)$.  For $0 < S < T$ with
  $T - S < \inj(Z)$, let \begin{equation*} c_1^\flat \defeq c^\flat
    \Big\vert_{[0,S]} \qquad \text{and} \qquad c_2^\flat \defeq c^\flat
    \Big\vert_{[S,T]}
  \end{equation*}
  be a decomposition of $c^\flat$ with $w_* \defeq c^\flat(S)$ the common
  endpoint.  Assume $w_*$ is not conjugate to either $z_1$ or $z_2$ along
  $c^\flat$, and assume also that there is at most one conjugate point to $z_1$
  along $c_2^\flat$.  Writing $m_c$ and $m_{c_1}$ for the Morse
  indices\footnote{Note that the Morse index of $c_2^\flat$ vanishes since it is
    a distance-minimizing geodesic, i.e., $m_{c_2} = 0$.} of $c^\flat$ and
  $c^\flat_1$, respectively, we have
  \begin{equation}
    \label{eq:morse-index-additivity}
    m_c = m_{c_1} + \ind \! \left( \left. \Hess \! \left[
          \dist_g^{c_1}(z_1,w) + \dist_g^{c_2}(w,z_2) \right] \right|_{w=w_*}
    \right) ,
  \end{equation}
  where $\ind(-)$ is the index of a quadratic form, i.e., the sum of the
  dimensions of the eigenspaces associated to negative eigenvectors.
\end{lemma}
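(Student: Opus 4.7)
The plan is to invoke the classical finite-dimensional reduction of the Morse index going back to Morse and Milnor (\emph{Morse Theory}, \S16), which identifies the Morse index of a geodesic with that of the Hessian of energy restricted to a finite-dimensional space of ``broken geodesics,'' provided the subdivision is fine enough and the break points avoid conjugate points.

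First I would choose an auxiliary subdivision $0 = s_0 < s_1 < \cdots < s_N = S < T = s_{N+1}$ such that each subsegment $c^\flat\vert_{[s_{j-1},s_j]}$ has length less than $\inj(Z)$ and none of the intermediate points $q_j^* \defeq c^\flat(s_j)$, $j = 1,\ldots,N-1$, is conjugate to $z_1$ along $c^\flat$. Since $T - S < \inj(Z)$, the segment $c_2^\flat$ needs no further subdivision, and $w_*$ plays the role of the last break. By the Milnor broken-geodesic theorem, $m_c$ equals the index at $(q_1^*,\ldots,q_{N-1}^*,w_*)$ of the Hessian of the broken-path energy
\[
E^{\mathrm{full}}(q_1,\ldots,q_{N-1},w) \defeq \sum_{j=1}^{N-1} \dist_g(q_{j-1},q_j)^2 + \dist_g(q_{N-1},w)^2 + \dist_g(w,z_2)^2
\]
(with $q_0 = z_1$ fixed throughout), and similarly $m_{c_1}$ equals the index of the Hessian of $E^{(1)}(q_1,\ldots,q_{N-1}) \defeq E^{\mathrm{full}}\vert_{w=w_*} - \dist_g(w_*,z_2)^2$.

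Next I would observe that $E^{\mathrm{full}}(q,w) = E^{(1)}(q;w) + \dist_g(w,z_2)^2$, where $E^{(1)}(q;w)$ lets $w$ vary in the last summand. At the critical point the Hessian takes the block form
\[
H = \begin{pmatrix} H_{qq} & H_{qw} \\ H_{wq} & H_{ww} \end{pmatrix},
\]
and $H_{qq}$ coincides with the Hessian computing $m_{c_1}$. The hypothesis that $w_*$ is not conjugate to $z_1$ along $c_1^\flat$ makes $H_{qq}$ nondegenerate, so Sylvester's law of inertia gives $\ind(H) = \ind(H_{qq}) + \ind\!\bigl(H_{ww} - H_{wq} H_{qq}^{-1} H_{qw}\bigr)$, i.e., $m_c = m_{c_1} + \ind(\mathrm{Schur})$.

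Finally I would identify the Schur complement with the Hessian appearing in the lemma. For any function of the form $F(q,w) + G(w)$, the Schur complement in $w$ at a nondegenerate critical point of $F(\cdot, w)$ equals the Hessian in $w$ of the reduced function $F(q^*(w), w) + G(w)$, where $q^*(w)$ is produced by the implicit function theorem from $\partial_q F = 0$. In our setting $q^*(w)$ selects precisely the broken-geodesic break points of the unique Jacobi-field deformation of $c_1^\flat$ terminating at $w$, whose length is $\dist_g^{c_1}(z_1,w)$. Thus the Schur complement equals $\Hess_{w_*}\!\bigl([\dist_g^{c_1}(z_1,w)]^2 + [\dist_g^{c_2}(w,z_2)]^2\bigr)$, which has the same index as $\Hess_{w_*}\!\bigl[\dist_g^{c_1}(z_1,w) + \dist_g^{c_2}(w,z_2)\bigr]$ at this critical point since the two forms differ only by a positive multiple of $2T$ (chain rule, using that the inner distance function is a positive critical value). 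The main obstacle is the last identification: one must verify that the implicit-function reconstruction of $q^*(w)$ yields genuinely the $c_1^\flat$-adapted Jacobi family (rather than some other broken geodesic between $z_1$ and $w$), which relies on restricting to a sufficiently small neighborhood of $(q_1^*,\ldots,q_{N-1}^*)$ permitted by the nonconjugacy hypothesis.
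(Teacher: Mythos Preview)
Your approach via Milnor's broken-geodesic approximation and a Schur-complement decomposition is genuinely different from the paper's proof, which instead establishes the two inequalities $m_c - m_{c_1} \le i_*$ and $i_* \le m_c - m_{c_1}$ directly by building explicit variations out of Jacobi fields vanishing at the (at most one) conjugate point on $c_2^\flat$, and then reading off negative directions of the Hessian from the second-variation formula. Your route is more algebraic and, once repaired, arguably cleaner; but as written it has two real gaps.

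First, Milnor's finite-dimensional energy carries the time weights:
\[
E^{\mathrm{full}}(q,w) \;=\; \sum_{j} \frac{\dist_g(q_{j-1},q_j)^2}{s_j - s_{j-1}},
\]
not the unweighted $\sum_j \dist_g(q_{j-1},q_j)^2$ you wrote. Without the weights, the first-variation condition at $w$ forces the incoming and outgoing \emph{speeds} (not just directions) to match, i.e.\ equal adjacent segment lengths; since the last interval $[S,T]$ has length $T-S$ and your subdivision of $[0,S]$ has step $S/N$, the point $(q_1^*,\ldots,q_{N-1}^*,w_*)$ is generally \emph{not} critical for your $E^{\mathrm{full}}$, and Milnor's index identification does not even apply there.

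Second, and relatedly, the last step is not right. With the correct weighted energy the reduced function is $f(w)^2/S + g(w)^2/(T-S)$, where $f = \dist_g^{c_1}(z_1,\cdot)$ and $g = \dist_g^{c_2}(\cdot,z_2)$, and at $w_*$ one computes
\[
\Hess\!\left(\tfrac{f^2}{S}+\tfrac{g^2}{T-S}\right)
\;=\; 2\bigl(\Hess f + \Hess g\bigr) \;+\; 2\Bigl(\tfrac{1}{S}+\tfrac{1}{T-S}\Bigr)\,\nabla f\otimes\nabla f,
\]
which is \emph{not} a scalar multiple of $\Hess(f+g)$. The indices nonetheless agree, but for a reason you do not give: both Hessians are block-diagonal in the tangential/normal splitting along $c^\flat$ (since $\Hess f(\dot c^\flat,\cdot)=\nabla_{\dot c^\flat}\dot c^\flat=0$ and likewise for $g$), the normal blocks are proportional by the factor $2$, and the tangential block is strictly positive for the energy Hessian but zero for $\Hess(f+g)$, so neither contributes to the index. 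With these two repairs your argument goes through.
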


\begin{proof}
  Define $m_* \defeq m_c - m_{c_1}$, and set
  \begin{equation*}
    i_* \defeq \ind \! \left( \left. \Hess \! \left[
          \dist_g^{c_1}(z_1,w) + \dist_g^{c_2}(w,z_2) \right] \right|_{w=w_*}
    \right) .
  \end{equation*}
  We first claim that $m_* \leqslant i_*$.  If $m_* = 0$, this is trivially
  true, so we assume $m_* \geqslant 1$, i.e., that there exists a conjugate
  point $z_\upc = c^\flat_2(s_\upc)$ to $z_1$ along $c_2^\flat$.  Therefore, we
  may choose independent normal Jacobi fields
  $\left( \vec{J}_1, \ldots, \vec{J}_{m_*} \right)$ along $c^\flat$ such that
  $\vec{J}_j(0) = \vec{J}_j(s_\upc) = \vec{0}$.  We arrange that their values at
  $w_*$ (i.e., at $s = S$) are $g$-orthonormal, and we extend these to a maximal
  collection $\left( \vec{J}_1, \ldots, \vec{J}_{n-1} \right)$ whose values at
  $w_*$ form an orthonormal basis of $N_{w_*} c^\flat$.  These values then
  define a Fermi normal coordinate system $(\nu,\ell)$ along $c^\flat$; we use
  these coordinates to calculate $i_*$.

  For each $j = 1,\ldots,n-1$ let $\vec{V}_j(s)$ be the unique broken (normal)
  Jacobi field along $c^\flat$ satisfying
  \begin{equation*}
    \vec{V}_j(0) = \vec{V}_j(T) = \vec{0} \qquad \text{and} \qquad \vec{V}_j(S)
    = \vec{J}_j(S) .
  \end{equation*}
  We construct the variation of $c^\flat$ with respect to these broken Jacobi
  fields $\bsfV = \left( \vec{V}_1, \ldots, \vec{V}_{n-1} \right)$:
  \begin{equation*}
    c^\flat_{\bsfV}(s;\nu) \defeq \exp_{c^\flat(s)} \! \left[ \nu_1 \,
      \vec{V}_1(s) + \cdots + \nu_{n-1} \, \vec{V}_{n-1}(s) \right].
  \end{equation*}
  By definition, $c^\flat_{\bsfV}(s;\nu)$ agrees to first order with the path realizing the broken
  distance between $z_1$ and $z_2$ with intermediate point $(\nu, \ell = 0)$ in
  the local manifold $\{\ell = 0\}$ transverse to $c^\flat.$  Since the
  second variation formula involves only the first derivative of the
  variation, this implies that
  \begin{equation}
    \label{eq:broken-distance-variation}
    \length \! \left( c^\flat_{\bsfV}(\cdot;\nu) \right) = \dist_g^{c_1}(z_1 ;
    \nu,0) + \dist_g^{c_2}(\nu,0; z_2) +O(\nu^3).
  \end{equation}
  By putting the Hessian in \eqref{eq:morse-index-additivity} into our Fermi
  normal coordinate system, we see that
  \begin{equation*}
    i_* = \ind \! \left( \left. \del_\nu^2 \left[ \dist_g^{c_1}(z_1;\nu,0) +
          \dist_g^{c_2}(\nu,0;z_2) \right] \right|_{\nu = 0} \right) ,
  \end{equation*}
  that is, the index in question is the same as the index of the Hessian in the
  $\nu$-variables only.

  We show the existence of a negative eigenvalue of the $\nu$-Hessian of
  \eqref{eq:broken-distance-variation} by using a standard piece of Riemannian
  geometry.  For any $\varepsilon > 0$ small, we may construct a smooth vector
  field $\vec{X}_1(s)$ along $c^\flat$ satisfying the following:
  \begin{enumerate}[(i)]
  \item $\vec{X}_1(s)$ agrees with $\vec{J}_1(s)$ for
    $0 \leqslant s < s_\upc - \varepsilon$;
  \item $\vec{X}_1(s) \equiv \vec{0}$ for
    $s_\upc + \varepsilon < s \leqslant T$; and
  \item the variation $c^\flat_{\vec{X}_1}(s;\nu_1)$ is shorter than $c^\flat$
    for small $\nu_1 > 0$.
  \end{enumerate}
  (For the details, see, e.g., \cite{Jos}*{Theorem 5.3.1}.)  By choosing
  $\varepsilon < s_\upc - S$, we produce a variation of $c^\flat$ which agrees
  with the broken variation $c^\flat_{\bsfV}(s;\nu_1,0,\ldots,0)$ obtained
  previously for $0 \leqslant s \leqslant S$.  On the other hand, as long as
  $\nu_1$ is small, we are guaranteed that $c^\flat_{\bsfV}(s;\nu_1,0,\ldots,0)$
  is shorter than the injectivity radius of $Z$ for $S \leqslant s \leqslant T$
  since $c^\flat_2$ has this property.  This implies that it is distance
  minimizing for these values of $s$, which in turn implies
  \begin{equation*}
    \length \! \left( c^\flat_{\bsfV}( \cdot ; \nu_1, 0, \ldots, 0) \right)
    \leqslant \length \! \left( c^\flat_{\vec{X}_1}(\cdot;\nu_1) \right) <
    \length \! \left( c^\flat \right) .
  \end{equation*}
  Hence, the length of $c^\flat_{\bsfV}(\cdot;\nu)$ is decreasing in the
  $\nu_1$-direction at $\nu = 0$, and therefore the Hessian
  $\left. \del_\nu^2 \left[ \dist_g^{c_1}(z_1;\nu,0) + \dist_g^{c_2}(\nu,0;z_2)
    \right] \right|_{\nu = 0}$
  is negative in that direction.  This implies the existence of a negative
  eigenvalue $\mu_1 < 0$ and an associated $\mu_1$-eigenvector $\vec{v}_1$ of
  that Hessian, as desired.

  To generate the remaining eigenvectors of the Hessian, we apply this argument
  inductively.  If $\left( \vec{v}_1, \ldots, \vec{v}_\ell \right)$ are distinct
  eigenvectors of the Hessian associated to negative eigenvalues, then we may
  find independent Jacobi fields
  $\left( \vec{J}_1^\sharp, \ldots, \vec{J}_{n-\ell-1}^\sharp \right)$ in the
  span of our original collection
  $\left( \vec{J}_1, \ldots, \vec{J}_{n-1} \right)$ such that
  $\vec{J}_j^\sharp(s_\upc) = \vec{0}$ for at least $j = 1, \ldots, m_* - \ell$
  and whose values at $s = S$ are an orthonormal basis of the
  $g$-orthocomplement of the span of
  $\left( \vec{v}_1, \ldots, \vec{v}_\ell \right)$ in $N_{w_*} c^\flat$.  Thus,
  as long as $m_* - \ell > 0$, our previous argument shows there is another
  negative eigenvalue.  Altogether, this shows that $m_* \leqslant i_*$,
  finishing the first claim.

  We now show the remaining inequality, $i_* \leqslant m_*$.  Suppose that
  $\left( \vec{v}_1,\ldots,\vec{v}_{n-1} \right)$ are the distinct eigenvectors
  of our Hessian, the first $i_*$ of which are associated to negative
  eigenvalues $\mu_j$.  We may then use these vectors as the coordinate vector
  fields $\del_{\nu_j}$ at $w_*$, extending along $c^\flat$ as before.  For each
  $j$, we let $\vec{V}_j(s)$ be the broken Jacobi field along $c^\flat$
  satisfying
  \begin{equation*}
    \vec{V}_j(0) = \vec{V}_j(T) = \vec{0} \quad \text{and} \quad \vec{V}_j(S) =
    \vec{v}_j ,
  \end{equation*}
  and we again construct the (approximate) joint broken variation $c^\flat_{\bsfV}(s;\nu)$
  of $c^\flat$ coming from these broken Jacobi fields.  Thus
  \begin{equation*}
    \left. \del_{\nu_j}^2 \length\! \left( c^\flat_{\vec{V}_j} \! \left(\cdot; \nu_j
        \right) \right) \right|_{\nu_j} = \mu_j ,
  \end{equation*}
  which is negative when $j = 1,\ldots,i_*$.

  Finally, let $z_{\upc(\kappa)} = c^\flat(s_{\upc(\kappa)})$ be the points
  which are conjugate to $z_1$ along $c^\flat$ for times
  $0 < s_{\upc(\kappa)} < S$, and let $\vec{M}_{\kappa,j}(s)$ be independent
  broken Jacobi fields along $c^\flat$ such that
  $\vec{M}_{\kappa,j}(0) = \vec{M}_{\kappa,j}(s_{\upc(\kappa)}) = \vec{0}$ and
  $\vec{M}_{\kappa,j} (s) \equiv \vec{0}$ for
  $s_{\upc(\kappa)} \leqslant s \leqslant T$.  There are exactly $m_{c_1}$ of
  these broken Jacobi fields.  Since our vector fields $\vec{V}_j$ are nonzero
  at $s = S$, they do not lie in the span of the $\vec{M}_{\kappa,j}$'s.
  Moreover, since the index form is negative on $\vec{V}_j$ for $j = 1,\ldots,m$,
  there must be a conjugate point $z_\upc = c^\flat(s_\upc)$ for some
  $S < s_\upc < T$.  This conjugate point is unique by our assumptions on the
  decomposition of $[0,T]_s$, so it must have multiplicity $m_*$.  This concludes
  the proof.
\end{proof}

As the signature of a matrix is invariant under small pertubations, we
may adjust the microsupports of $A_0$, $A_1$, and $A$ to ensure that
$m_{\gamma_0} = m_{c_0}$, where we recall that $m_{\gamma_0}$ is the Morse index
of the geodesic segment $\gamma_0^\flat$.  Similarly, we may arrange that
$m_\gamma = m_{c}$, and hence
\begin{equation}
  \label{eq:int-signature-relation}
  e^{\frac{i \pi \sgn(\bsfQ_0)}{4}} \, e^{- \frac{i \pi (n-1)}{2}} \,
  i^{-m_{\gamma_0}} = e^{- \frac{i\pi(n-1)}{4}} \, i^{-m_\gamma} .
\end{equation}

The remainder of the proof of Theorem~\ref{thm:interior-ampl} deals with the
third line of \eqref{eq:int-composite-SK-statphase-princ-amp-2}.

\begin{lemma}
  \label{thm:int-determinant}
  We have the following identifications:
  \begin{equation}
    \label{eq:int-amplitude-det-factor}
    \frac{\omega_g^2(w_*) \cdot \Theta^{-\frac{1}{2}} (z \to w_*) \,
      \Theta^{-\frac{1}{2}} (w_* \to z')}{|dw| \cdot
      \dist_g^{\gamma_0}(z,w_*)^{\frac{n-1}{2}} 
      \cdot \dist_g(w_*,z')^{\frac{n-1}{2}}} \cdot \left| \det(\bsfQ_0)
    \right|^{-\frac{1}{2}} = \frac{\Theta^{-\frac{1}{2}}(z \to
      z')}{\dist^\gamma(z,z')^{\frac{n-1}{2}}}
  \end{equation}
  and
  \begin{equation}
    \label{eq:Theta-equivalence}
    \Theta^{-\frac{1}{2}}(z \to z') = \Theta^{-\frac{1}{2}}(z' \to z) .
  \end{equation}
\end{lemma}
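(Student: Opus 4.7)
Both parts of the lemma hinge on the constancy of a Wronskian of Jacobi endomorphisms along the geodesic $c^\flat:[0,T]\to Z$ from $z$ to $z'$ (so that $T=\dist^\gamma(z,z')$). Let $\bsfJ(t)$ and $\bsfK(t)$ denote the normal Jacobi endomorphisms along $c^\flat$ with, respectively, $\bsfJ(0)=\vec{0}$, $\dot\bsfJ(0)=\Id$ and $\bsfK(T)=\vec{0}$, $\dot\bsfK(T)=-\Id$. From \eqref{eq:Jacobi-endo-exp-2} one has $\Theta(z\to z')=T^{-(n-1)}\lvert\det\bsfJ(T)\rvert$ and $\Theta(z'\to z)=T^{-(n-1)}\lvert\det\bsfK(0)\rvert$. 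By \eqref{eq:Jacobi-endo-Wronskian}, $\calW(\bsfJ,\bsfK)=\dot\bsfJ^\upt\bsfK-\bsfJ^\upt\dot\bsfK$ is constant; evaluating at $t=0$ gives $\calW=\bsfK(0)$, while at $t=T$ it gives $\calW=\bsfJ(T)^\upt$. Hence $\bsfK(0)=\bsfJ(T)^\upt$, and taking absolute values of determinants yields \eqref{eq:Theta-equivalence}.

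For \eqref{eq:int-amplitude-det-factor}, I work in Fermi normal coordinates $(\nu,\ell)$ centered at $w_*=c^\flat(r)$ with $\del_\ell=\dot c^\flat(r)$. Here $\omega_g^2(w_*)=|dw|$, $\del_w\dist_g(z,w_*)=\del_\ell$, and differentiating the eikonal equation $|\nabla\dist_g|^2\equiv 1$ places $\del_\ell$ in the kernel of both Hessians $\nabla_w^2\dist_g(z,w_*)$ and $\nabla_w^2\dist_g^{\gamma_0}(w_*,z')$. A direct calculation identifies the restrictions of these Hessians to $\del_\ell^\perp$ with $\dot\bsfJ(r)\bsfJ(r)^{-1}$ and $-\dot\bsfK(r)\bsfK(r)^{-1}$ respectively (with $r'=T-r$; the minus sign arises from the reparametrization needed to read $\bsfK$ as a forward Jacobi endomorphism from $z'$). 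Expanding $\bsfQ_0$ in the $(\eta,\nu,\ell)$ block structure then reduces the calculation to $\lvert\det\bsfQ_0\rvert=\lvert\det\bigl(\dot\bsfJ(r)\bsfJ(r)^{-1}-\dot\bsfK(r)\bsfK(r)^{-1}\bigr)\rvert$.

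Applying the Wronskian trick once more at $t=r$, and using the symmetry $\bsfK^\upt\dot\bsfK=\dot\bsfK^\upt\bsfK$ (a consequence of $\calW(\bsfK,\bsfK)\equiv\vec{0}$), I obtain
\begin{equation*}
\bsfK^\upt\bigl(\dot\bsfJ\bsfJ^{-1}-\dot\bsfK\bsfK^{-1}\bigr)\bsfJ=\bsfK^\upt\dot\bsfJ-\dot\bsfK^\upt\bsfJ=\calW(\bsfJ,\bsfK)^\upt=\bsfJ(T).
\end{equation*}
Taking absolute values of determinants and converting each factor via $|\det\bsfJ(t)|=t^{n-1}\Theta(z\to c^\flat(t))$ (and analogously for $\bsfK$, after reparametrization) yields
\begin{equation*}
\lvert\det\bsfQ_0\rvert=\frac{T^{n-1}\,\Theta(z\to z')}{r^{n-1}(r')^{n-1}\,\Theta(z\to w_*)\,\Theta(z'\to w_*)}.
\end{equation*}
Substituting into the left-hand side of \eqref{eq:int-amplitude-det-factor}, the $r$ and $r'$ factors cancel, $\Theta^{-1/2}(z\to w_*)\,\Theta^{1/2}(z\to w_*)=1$, and $\Theta^{-1/2}(w_*\to z')\,\Theta^{1/2}(z'\to w_*)=1$ by the symmetry just established. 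What remains is $T^{-(n-1)/2}\,\Theta^{-1/2}(z\to z')$, which is exactly the right-hand side.

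The main obstacle will be the bookkeeping of signs and orientations: the minus sign attached to $\dot\bsfK\bsfK^{-1}$, the distinction between the outward normals of the two geodesic spheres meeting at $w_*$, and the transposes appearing in the Wronskian identity must all line up coherently. Once these sign conventions are fixed, the identity collapses to a single application of the constant-Wronskian property, which is also what drives the symmetry argument.
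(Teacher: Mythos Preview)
Your proposal is correct and follows essentially the same approach as the paper: work in Fermi normal coordinates at $w_*$, identify the normal Hessians of the two distance functions with the shape operators $\dot\bsfJ\bsfJ^{-1}$ and $-\dot\bsfK\bsfK^{-1}$, and then collapse the product $\bsfK^\upt(\dot\bsfJ\bsfJ^{-1}-\dot\bsfK\bsfK^{-1})\bsfJ$ to a Wronskian using the symmetry $\bsfK^\upt\dot\bsfK=\dot\bsfK^\upt\bsfK$; evaluating this constant Wronskian at the two endpoints gives both \eqref{eq:int-amplitude-det-factor} and \eqref{eq:Theta-equivalence}. The only cosmetic difference is that the paper introduces a third Jacobi endomorphism $\bsfL$ based at $w_*$ to handle $\Theta(w_*\to z')$, whereas you use the already-proved symmetry \eqref{eq:Theta-equivalence} to swap it for $\Theta(z'\to w_*)$---a minor streamlining of the same argument.
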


\begin{proof}
  We shall express the quantities in \eqref{eq:int-amplitude-det-factor} and
  \eqref{eq:Theta-equivalence} in terms of Jacobi endomorphisms along
  $TZ \big|_{c^\flat}$.  To set these up, let $(\nu',\ell')$ be a Fermi normal
  coordinate system along $c^\flat$ based at $z' = c^\flat(0)$, and let
  $\bsfJ(s)$ be the Jacobi endomorphism satisfying
  \begin{equation*}
    \bsfJ(0) = \vec{0} \quad \text{and} \quad \skew{3}{\dot}{\bsfJ}(0)
    = \Id .
  \end{equation*}
  Now, let $\bsfL(s)$ be the analogous Jacobi endomorphism arising from a Fermi
  normal coordinate system $(\nu_*,\ell_*)$ based at $w_* = c^\flat(t_M)$, and
  let and $\bsfK(s)$ be that coming from a Fermi normal coordinate system
  $(\nu,\ell)$ based at $z = c^\flat(t)$.  We may then identify
  $\Theta(z \to w_*)$ as
  \begin{equation*}
    \Theta(z \to w_*) \defeq \left| \left. \det_g \! \left[ D \exp_z (-) \right]
      \right|_{-(t - t_M) \, \del_{\ell}} \right| = (t - t_M)^{-(n-1)} \left|
      \det \bsfK^\perp(t_M) \right|
  \end{equation*}
  using \eqref{eq:Jacobi-endo-exp-2} and the fact that
  $\bsfK^\parallel(t_M) \cdot \del_\ell = (t - t_M) \cdot \del_\ell$.
  Similarly,
  $\Theta(w_* \to z') = t_M^{-(n-1)} \left| \det \bsfL^\perp(0) \right|$, and
  from the Wronskian identity
  \begin{equation*}
    \left|\det \bsfL^\perp(0) \right| = \left| \det
      \calW(\bsfJ^\perp,\bsfL^\perp) \big\vert_{s = 0} \right| = 
    \left| \det \calW(\bsfL^\perp,\bsfJ^\perp) \big\vert_{s = t_M} \right| =
    \left| \det \bsfJ^\perp(t_M) \right| 
  \end{equation*}
  we may rewrite this as
  $\Theta(w_* \to z') = t_M^{-(n-1)} \left| \det \bsfJ^\perp(t_M) \right|$.
  Hence, upon identifying the distance factors with values of $s \in [0,t]$, we
  have
  \begin{equation}
    \label{eq:det-lemma-1}
    \frac{\Theta^{-\frac{1}{2}} (z \to w_*) \,
      \Theta^{-\frac{1}{2}} (w_* \to z')}{\dist_g^{\gamma_0}(z,w_*)^{\frac{n-1}{2}} 
      \cdot \dist_g(w_*,z')^{\frac{n-1}{2}}} = \left| \det \! \left[
        \bsfJ^\perp(t_M) \cdot \bsfK^\perp(t_M) \right] \right|^{-\frac{1}{2}} .
  \end{equation}

  We now turn to the factor
  $\frac{\omega_g^2(w_*)}{|dw|} \cdot \left| \det \bsfQ_0 \right|^{-\frac{1}{2}}
  = \left| \det \! \left[ g^{-1}(w_*) \cdot \bsfQ_0 \right]
  \right|^{-\frac{1}{2}}$.
  Observe that the lower right block of
  \eqref{eq:int-composite-SK-statphase-quadform2} may be viewed
  invariantly\footnote{This makes sense as a tensor since the gradient
    of the sum of distances vanishes along $c^\flat$.} as
  \begin{equation*}
    \left. \nabla^2_w d_w \left[ \dist_g(z,w) + \dist_g^{\gamma_0}(w,z') \right]
    \right|_{N_{w_*} c^\flat \otimes N_{w_*} c^\flat} ,
  \end{equation*}
  the Hessian of $\dist_g(z,w) + \dist_g^{\gamma_0}(w,z')$ acting on normal
  vectors to our geodesic $c^\flat$ at the critical point $w_*$, and therefore
  \begin{equation*}
    \left| \det \! \left[ g^{-1}(w_*) \cdot \bsfQ_0 \right] \right| = \left| \det
      \! \left[ \left. \left[ \Hess \! \left[ \dist_g(z,\cdot) \right]^\sharp +
            \Hess \! \left[ \dist_g^{\gamma_0}(\cdot,z') \right]^\sharp \right]
        \right|_{N_{w_*} c^\flat} \right] \right| ,
  \end{equation*}
  where the raising operator $(-)^\sharp$ converts these Hessians into
  endomorphisms of $N_{w_*}c^\flat$ via $\Hess(-)^\sharp = \nabla \grad (-)$.
  These gradients are
  \begin{equation*}
    \grad_w\!\left( \dist_g(z,w) \right) = \del_\ell \quad \text{and} \quad
    \grad_w\!\left( \dist_g^{\gamma_0}(w,z') \right) = \del_{\ell'} ,
  \end{equation*}
  which are the radial vector fields for the geodesic spheres with centers $z$
  and $z'$ respectively when restricted to $c^\flat$.  Applying the connection
  then returns the shape operators for these geodesic spheres, and as described
  in \cite{KowVan}, we may express these in terms of our Jacobi endomorphisms
  via
  \begin{equation}
    \label{eq:shape-operator-Jacobi-field}
    \begin{aligned}
      \left. \Hess \! \left[ \dist_g^{\gamma_0}(\cdot,z') \right]^\sharp
      \right|_{N_{w_*}c^\flat} &= \bsfJdot^\perp(t_M) \cdot
      (\bsfJ^\perp)^{-1}(t_M) \\
      \left. \Hess \! \left[ \dist_g(z,\cdot) \right]^\sharp \right|_{N_{w_*}
        c^\flat} &= - \dot{\bsfK}^\perp(t_M) \cdot (\bsfK^\perp)^{-1}(t_M) ,
    \end{aligned}
  \end{equation}
  where the minus sign arises from the different orientations of the two
  coordinate systems along $c^\flat$.

  We conclude by putting these calculations together.  Firstly, note that
  $\dot{\bsfK}^\perp(t_M) \cdot (\bsfK^\perp)^{-1}(t_M)$ is a symmetric
  endomorphism since $\calW\!\left(\bsfK^\perp,\bsfK^\perp\right) = \vec{0}$.
  Therefore, by \eqref{eq:det-lemma-1} and
  \eqref{eq:shape-operator-Jacobi-field} we compute that the left-hand side of
  \eqref{eq:int-amplitude-det-factor} is equal to
  \begin{multline*}
    \left| \det \! \left[ \bsfJdot^\perp(t_M) \cdot (\bsfJ^\perp)^{-1}(t_M) -
        \dot{\bsfK}^\perp(t_M) \cdot (\bsfK^\perp)^{-1}(t_M) \right] \cdot \det
      \! \left[ \bsfJ^\perp(t_M) \cdot (\bsfK^\perp)^\upt(t_M) \right]
    \right| \\
    \mbox{} = \left| \det \! \left[ \bsfJdot^\perp(t_M) - \dot{\bsfK}^\perp(t_M)
        \cdot (\bsfK^\perp)^{-1}(t_M) \cdot \bsfJ^\perp(t_M) \right] \cdot \det
      \! \left[ (\bsfK^\perp)^\upt(t_M) \right] \right| \\
    \mbox{} = \left| \det \! \left[ \bsfJdot^\perp(t_M) -
        \left((\bsfK^\perp)^\upt\right)^{-1}\!(t_M) \cdot
        (\dot{\bsfK}^\perp)^\upt(t_M) \cdot \bsfJ^\perp(t_M) \right] \cdot \det
      \! \left[ (\bsfK^\perp)^\upt(t_M) \right] \right| \\
    \mbox{} = \left| \det \! \left[ (\bsfK^\perp)^\upt(t_M) \cdot
        \bsfJdot^\perp(t_M) - (\dot{\bsfK}^\perp)^\upt(t_M) \cdot
        \bsfJ^\perp(t_M) \right] \right| ,
  \end{multline*}
  and this is exactly
  $\left| \det \! \left[ \left. - \calW\!\left(\bsfK^\perp, \bsfJ^\perp\right)
      \right|_{s = t_M} \right] \right|$.
  By constancy of the Wronskian, this is equal to
  $\left| \det \calW\!\left(\bsfK^\perp,\bsfJ^\perp\right) \big\vert_{s = 0}
  \right| = \left| \det \bsfK^\perp(0) \right|$
  and
  $\left| \det \calW\!\left(\bsfK^\perp,\bsfJ^\perp\right) \big\vert_{s = t}
  \right| = \left| \det \bsfJ^\perp(t) \right|$,
  which is what we wanted to show.
\end{proof}



\section{Proof of Theorem \ref{thm:amplitude-mult-diff}}
\label{sec:proof-theorem-mult-diff}


Let $\gamma : [0,T] \To \Sbstar X$ be a fixed broken geodesic with partition as
described in Section~\ref{sec:gamma-microlocalization}, and let
$\calU^\gamma(t)$ be a corresponding $\gamma$-microlocalization of the half-wave
group on $X$ as shown in \eqref{eq:gamma-microlocalization-hw}.  We shall prove
Theorem~\ref{thm:amplitude-mult-diff} by induction on $k$, the number of
diffractions the geodesic $\gamma$ undergoes.  As in the smooth case, we begin
by fixing an element $(t,z,z') \in \bbR_+ \times U_1^\flat \times U_0^\flat$.
To each point for which the Schwartz kernel
$\bmU^\gamma \defeq \calK\!\left[ \calU^\gamma(t) \right]$ is singular we
associate a broken geodesic $c : [0,t]_s \To \Sbstar Z$ with
$c(0) = (z',\zeta')$ and $c(t) = (z,\zeta)$ which we use in the calculation.
Note that we need only consider points $(t,z,z')$ whose associated geodesics $c$
satisfy
\begin{equation*}
  c(T_m) \in \WF'(A_m)
\end{equation*}
for all $m = 1,\ldots,2k+1$:  if this fails, then $\bmU^\gamma$ will be smooth
at $(t,z,z')$.  By adjusting these microlocalizers if necessary, we may ensure
that none of the points $c^\flat(t_m)$ are conjugate to one another since the
points $\gamma^\flat(T_m)$ of the reference geodesic are not conjugate to one
another.  Lastly, we define the broken geodesic segments
\begin{equation*}
  c_m \defeq c \big\vert_{[T_m,T_{m+1}]} \text{ for $m = 0, \ldots, 2k-1$ and } 
  c_{2k} \defeq c \big\vert_{[T_{2k},t]} .
\end{equation*}

We start with the case of a single diffraction, $k = 1$.  Recall that the
interim times are $t_m \defeq T_m - T_{m-1}$.

\begin{lemma}
  Let
  $\calU^\gamma(t) = A_3 \, \calU(t - T_2) \, A_2 \, \calU(t_2) \, A_1 \,
  \calU(t_1) \, A_0$
  be a $\gamma$-microlocalization of the half-wave group undergoing a single
  diffraction through the cone point $Y_\alpha$.  Then $\bmU^\gamma$ has a
  representation
  \begin{equation}
    \label{eq:1-diff-gammamicrolocalization-oscil-int}
    \bmU^\gamma(t,z,z') \equiv \int_{\bbR_\xi} e^{i \left[
        \dist_g^{\gamma_1}(z,Y_\alpha) + \dist_g^{\gamma_0}(Y_\alpha,z') - t
      \right] \xi} \, b(t,z,z',\xi) \, d\xi \mod{\calC^\infty}
  \end{equation}
  with amplitude
  $b \in S^0_\upc \! \left(\bbR_t \times U_1^\flat \times U_0^\flat \times
    \bbR_\xi ; |\Omega|^{\frac{1}{2}} (U_1^\flat \times U_0^\flat) \right)$
  given by
  \begin{multline}
    \label{eq:1-diff-gamma-microlocalization-principal-amplitude}
    \bm{a}(z,z',\xi) \cdot \frac{i^{-m_{\gamma_0}} \, i^{-m_{\gamma_1}}}{2\pi i}
    \cdot \frac{\chi(\xi) \cdot \bmD_\alpha(\Pi_\alpha(z), \Pi_\alpha(z'))}{
      \dist_g^{\gamma_1}(z,Y_\alpha)^{\frac{n-1}{2}} \,
      \dist_g^{\gamma_0}(Y_\alpha,z')^{\frac{n-1}{2}}} \\
    \mbox{} \times \Theta^{-\frac{1}{2}}(z' \to Y_\alpha) \,
    \Theta^{-\frac{1}{2}}(Y_\alpha \to z) \cdot \omega_g(z) \, \omega_g(z')
  \end{multline}
  modulo elements of $S^{-\frac{1}{2} + 0}$, where
  $\bm{a} \in S^0_\upc \!  \left( U_1^\flat \times U_0^\flat \times \bbR_\xi ;
    \HD(U_1^\flat \times U_0^\flat) \right)$
  is the combined amplitude of the microlocalizers:
  \begin{multline*}
    \bm{a}(z,z',\xi) \defeq \bigg[ a_3\!\left(z,\del_z \dist_g^{c_2}(z,w)
      \cdot \xi\right) \, a_2(w, - \del_{w} \dist_g(w,Y_\alpha) \cdot \xi) \\
    \mbox{} \times a_1(w',\del_{w'} \dist_g(Y_\alpha,w') \cdot \xi) \,
    a_0\!\left(z', - \del_{z'} \dist_g^{c_0}(w',z') \cdot \xi\right) \bigg]
    \bigg\vert_{ \substack{ w = c_1^\flat(t_2) \\ w' = c_0^\flat(t_1) } }.
  \end{multline*}
\end{lemma}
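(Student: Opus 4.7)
The plan is to decompose $\calU^\gamma(t)$ into three pieces corresponding to the pre-cone interior propagation, the diffractive cone crossing, and the post-cone interior propagation, and to apply the theorems already established to each factor. Since $A_1$ and $A_2$ are microsupported over the collar neighborhood $C_\alpha^\circ$ by the chosen partition of $\gamma$, the middle factor $A_2 \, \calU(t_2) \, A_1$ lies in the setting of Theorem \ref{thm:nonproduct-hw-amp}; meanwhile, by inserting auxiliary interior microlocalizers $A_1'$, $A_2'$ that agree with the identity on the microsupports of $A_1$, $A_2$, one has
\[
\calU^\gamma(t) \equiv \bigl[A_3 \, \calU(t - T_2) \, A_2'\bigr] \circ \bigl[A_2 \, \calU(t_2) \, A_1\bigr] \circ \bigl[A_1' \, \calU(t_1) \, A_0\bigr] \mod{\calC^\infty},
\]
and the two outer factors are $\gamma$-microlocalizations in the interior to which Theorem \ref{thm:interior-ampl} applies directly.

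Once each of the three bracketed factors is written in oscillatory integral form, I would compose them by integrating over the intermediate spatial variables $w', w$ near $Y_\alpha$ and reduce the resulting triple oscillatory integral by stationary phase in $(\eta, w)$ and $(\theta, w')$, where $\eta, \theta$ are the phase variables from the two interior pieces and $\xi$ is the phase variable from the cone factor. Following the pattern established in the induction step of the proof of Theorem \ref{thm:interior-ampl}, the critical-point conditions force $\eta = \theta = \xi$, $w = c_1^\flat(t_2)$, and $w' = c_0^\flat(t_1)$; additivity of distance along the broken geodesic then telescopes the total phase to $\bigl[\dist_g^{\gamma_1}(z, Y_\alpha) + \dist_g^{\gamma_0}(Y_\alpha, z') - t\bigr]\xi$, as required. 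The $\xi$-power bookkeeping is straightforward: the two factors of $\xi^{(n-1)/2}$ from Theorem \ref{thm:interior-ampl}, the $\xi^{-1}$ from Theorem \ref{thm:nonproduct-hw-amp}, and the $\xi^{-(n-1)}$ from the stationary phase Jacobian combine to the prescribed order $0$.

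The main obstacle is to verify the chaining of the geometric pre-factors. For the $\Theta$ factors, a Wronskian identity analogous to Lemma \ref{thm:int-determinant} is needed: the interior factors $\Theta^{-1/2}(z' \to w')$ and $\Theta^{-1/2}(w \to z)$ from Theorem \ref{thm:interior-ampl}, combined with the cone-face factors $\Theta^{-1/2}(w' \to Y_\alpha)$ and $\Theta^{-1/2}(Y_\alpha \to w)$ from Theorem \ref{thm:nonproduct-hw-amp} and with the Hessian determinants appearing in stationary phase, should collapse to the single pair $\Theta^{-1/2}(z' \to Y_\alpha) \cdot \Theta^{-1/2}(Y_\alpha \to z)$. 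This uses the flowout identification \eqref{ThetaYY} of $\Theta$ with determinants of b-Jacobi matrices at the cone point together with the multiplicativity of such determinants along chained geodesic segments. On the phase side, the signature factors from the two stationary phase applications cancel the $e^{-i\pi(n-1)/2}$ produced by the two interior amplitudes (one $e^{-i\pi(n-1)/4}$ from each), by Lemma \ref{thm:int-signature} applied along each interior segment; what remains are the two Morse-index factors $i^{-m_{\gamma_0}} \, i^{-m_{\gamma_1}}$. The boundary arguments of $\bmD_\alpha$ in the cone amplitude become $\bmD_\alpha(\Pi_\alpha(z), \Pi_\alpha(z'))$ via the interior-to-boundary identification along the geodesic near $Y_\alpha$, and the error is controlled in $S^{-1/2 + 0}$ by the remainder estimates in Theorems \ref{thm:nonproduct-hw-amp} and \ref{thm:interior-ampl} together with standard symbol composition bounds.
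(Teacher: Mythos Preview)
Your approach is essentially the same as the paper's: write the composite as a triple oscillatory integral using Theorem~\ref{thm:interior-ampl} for the two interior factors and Theorem~\ref{thm:nonproduct-hw-amp} for the diffractive factor, then apply stationary phase in the $(\theta',w')$- and $(\theta,w)$-variables, invoking the analogues of Lemmas~\ref{thm:int-signature} and~\ref{thm:int-determinant} with cone Jacobi fields in place of smooth ones. The only packaging difference is that the paper attaches $A_1,A_2$ to the outer interior factors (so the middle amplitude is that of $\calU(t_2)$ alone), whereas you insert auxiliary $A_1',A_2'$ and keep $A_1,A_2$ on the cone factor; this is immaterial.

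One bookkeeping slip: the cone amplitude in Theorem~\ref{thm:nonproduct-hw-amp} is of order~$0$, not~$-1$ (there is no $\xi^{-1}$), and with your stated powers the count $(n-1)-1-(n-1)=-1$ does not give~$0$. The correct count is $2\cdot\tfrac{n-1}{2}+0-(n-1)=0$. This does not affect the argument.
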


\begin{proof}
  We shall apply the method of stationary phase to the oscillatory integral
  \begin{equation*}
    \bmU^\gamma \equiv \int e^{i \psi} \, b_1(z,w,\theta) \, d(t_2,w,w',\xi) \,
    b_0(w',z',\theta') \, d\theta d\xi d\theta' \mod{\calC^\infty}
  \end{equation*}
  in the $(\theta,w;\theta',w')$-variables, where $\psi$ is the phase function
  \begin{multline*}
    \psi = \left[ \dist_g^{c_2}(z,w) - (t - T_2) \right] \theta + \left[
      \dist_g(w,Y_\alpha) + \dist_g(Y_\alpha,w') - t_2 \right] \xi \\
    \mbox{} + \left[ \dist_g^{c_0}(w',z') - t_1 \right] \theta' ,
  \end{multline*}
  $b_1$ is the amplitude of $A_3 \, \calU(t - T_2) \, A_2$, $d$ is the amplitude
  of the diffracting propagator $\calU(t_2)$, and $b_0$ is the amplitude of
  $A_1 \, \calU(t_1) \, A_0$.  Here, we identify $x(z) = \dist_g(z,Y_\alpha)$
  for $z \in C_\alpha^\circ$, and we use the standardized oscillatory integral
  and amplitude formulations in Theorem~\ref{thm:nonproduct-hw-amp} and
  Theorem~\ref{thm:interior-ampl}.

  The stationary phase calculations proceed, \emph{mutatis mutandis}, as those
  of the interior case in Section~\ref{sec:proof-theorem-int-ampl}.  As before,
  the critical set in the $(\theta',w')$-variables is
  $\left\{ w'_* \defeq c_0^\flat(t_1) \text{ and } \theta' = \xi \right\}$.  The
  signature and determinant of the Hessian of $\psi$ in these variables are
  calculated following Lemmas~\ref{thm:int-signature} and
  \ref{thm:int-determinant} with the only change being notational---the smooth
  Jacobi fields become cone Jacobi fields.  (Note in particular that
  the analogous expression to \eqref{eq:det-lemma-1} yields the
  interesting relationship between the $\Theta(Y_\alpha\to Y_\beta)$ factors and
  determinants of differences of shape operators alluded to earlier.)

 Following these calculations with
  those arising from the $(\theta,w)$-variables then concludes the proof.
\end{proof}

To conclude the section, we go through the induction step of composing a
propagator undergoing $k - 1$ diffractions with a propagator undergoing a single
diffraction as in the previous lemma:
\begin{equation*}
  \calU^\gamma(t) = \left[ A_{2k+1} \, \calU(t - T_{2k}) \, A_{2k} \right] \circ
  \calU(t_{2k}) \circ \left[ A_{2k-1} \, \calU(t_{2k-1}) \, A_{2k - 2} \cdots A_1
    \, \calU(t_1) \, A_0 \right] .
\end{equation*}
The Schwartz kernel of this $\gamma$-microlocalization may be represented as
\begin{equation*}
  \bmU^\gamma \equiv \int e^{i \psi} \, b_1(z,w,\theta) \, d(t_{2k},w,w',\xi) \,
  b_0(T_{2k-1}, w', z', \theta') \, d\theta d\xi d\theta' \mod{\calC^\infty} 
\end{equation*}
with the phase function being
\begin{multline*}
  \psi = \left[ \dist_g^{c_{2k}}(z,w) - (t - T_{2k}) \right] \theta + \left[
    \dist_g(w,Y_{\alpha_k}) + \dist_g(Y_{\alpha_k},w') - t_{2k} \right] \xi \\
  \mbox{} + \left[ \dist^{\tilde\gamma_{k-1}}_g(w', Y_{\alpha_{k-1}}) + \sum_{j
      = 2}^{k-1} \dist_g^{\gamma_{j-1}}(Y_{\alpha_j},Y_{\alpha_{j-1}}) +
    \dist_g^{\gamma_0}(Y_{\alpha_1},z') - T_{2k-1} \right] \theta'
\end{multline*}
and with $b_1$ the amplitude of $A_{2k+1} \, \calU(t - T_{2k}) \, A_{2k}$, $d$
the amplitude of the diffracting propagator $\calU(t_{2k})$, and $b_0$ the
amplitude of
$A_{2k-1} \, \calU(t_{2k-1}) \, A_{2k-2} \cdots A_1 \, \calU(t_1) \, A_0$ as in
\eqref{thm:amplitude-mult-diff}.  In this phase function, we set
$\tilde\gamma_{k-1}^\flat$ to be the segment of $\gamma^\flat$ stretching from
$Y_{\alpha_{k-1}}$ to $\gamma^\flat(T_{2k-1})$.  We apply the method of
stationary phase in the $(\theta,w;\theta',w')$-variables.  The calculations for
the $(\theta,w)$-variables are exactly as before, and those coming from the
$(\theta',w')$-variables are different only in that the critical set forces the
path taken from $Y_{\alpha_{k-1}}$ to $Y_{\alpha_k}$ to be the geodesic segment
$\gamma_{k-1}$ (which is the \emph{unique} geodesic connecting the cone points)
and the introduction of an overall factor of
$e^{\frac{i \pi (n-1)}{4}} \cdot (2\pi)^{\frac{n+1}{2}}$.  This ends the proof
of Theorem~\ref{thm:amplitude-mult-diff}.



\section{A microlocal partition of unity}
\label{section:partition}

In this section, we develop a microlocal partition of unity on our conic
manifold $X$ which is adapted to the diffractive part of the geodesic
flow; such methods have been previously used by Hillairet
\cite{Hillairet:Contribution} and by the second author \cite{Wunsch:Poisson}.
This is a preparatory step in the analysis of the wave trace at the length of
strictly diffractive closed geodesic $\gamma$, ultimately allowing us to
microlocalize $\calU(t)$ in such a way that we may apply the calculation of
Theorem~\ref{thm:amplitude-mult-diff}.  In particular, we will be able to
microlocalize $\calU(t)$ to the extent where we need only consider a single
diffraction through one of the cone points $Y_\alpha$ at a time.

First, fix $\ell$ to be the minimal distance between the cone points of $X$:
\begin{equation*}
  \ell \defeq \min \left\{ \dist_g(Y_\alpha, Y_\beta) : \alpha,\beta =
    1,\ldots,N \right\} .
\end{equation*}
Let $\delta_{\mathrm{cone}} > 0$ be a positive constant satisfying
$\delta_{\mathrm{cone}} < \min \! \left( \frac{x_*}{9}, \frac{1}{10} \, \ell
\right)$.\footnote{Thus
  the set $\{ x < \delta_{\mathrm{cone}}\}$ is well-defined.}  For each
$\alpha = 1,\ldots,N$, we choose $\psi_\alpha \in \calC^\infty_\upc(X)$ to be a
nonnegative bump function such that $\psi_\alpha \equiv 1$ near the cone point
$Y_\alpha$ and whose support is contained in the neighborhood
$\left\{ x_\alpha < \delta_{\mathrm{cone}} \right\}$ of $Y_\alpha$.  Multiplying
by $\psi_\alpha$ then localizes within this neighborhood of $Y_\alpha$.  We call
these multiplication operators $\left\{\psi_\alpha\right\}_{\alpha = 1}^N$ the
\emph{cone localizers}.

Next, let $\left\{ A_j \right\}_{j = 1}^J \subseteq \Psi^0(X^\circ)$ be a finite
collection of pseudodifferential operators on the interior of $X$ possessing the
following four properties:
\begin{enumerate}[(i)]
\item Each $A_j$ has compactly supported Schwartz kernel, i.e.,
  $A_j \in \Psi^0_\upc(X^\circ)$;
\item For a fixed constant $\delta_\mathrm{int} > 0$, the microsupport
  $\WF'(A_j) \subseteq S^* X^\circ$ of each $A_j$ is contained in a ball of
  radius $\delta_\mathrm{int}$ with respect to an overall fixed Finsler metric
  on the interior cosphere bundle; and
\item The $A_j$'s complete the cone localizers to a microlocal partition of
  unity in the sense\footnote{In particular, this error here is smoothing and
    compactly supported \emph{in the interior}.} that
  \begin{equation*}
    \Id - \left[ \sum_{\alpha = 1}^N \psi_\alpha + \sum_{j = 1}^J A_j \right]
    \in \Psi^{-\infty}_\upc(X^\circ) .
  \end{equation*}
\item The $A_j$ have square roots modulo smoothing errors: there
  exists pseudodifferential operators, denoted $\sqrt{A_j} \in \Psi_c^0(X^\circ),$ such
  that $(\sqrt{A_j})^2-A_j \in \Psi_c^{-\infty}(X^\circ).$  The
 cutoffs $\psi_\alpha$ also have smooth square roots.
\end{enumerate}
We call these operators $\left\{ A_j \right\}_{j=1}^J$ the \emph{interior
  localizers}.  Note that we may microlocalize in the interior more finely by
adjusting the parameter $\delta_{\mathrm{int}}$ or by choosing a finite number
of the interior localizers as desired (while, of course, keeping in mind that
they must form a microlocal partition of unity).  In what follows, we shall
write $B_\bullet$ for an operator which is either a cone localizer or an
interior localizer.

Now, let us fix a strictly diffractive closed geodesic $\gamma$ of period $T$;
thus, $\gamma$ is a piecewise smooth curve on $X$ having jumps within some
boundary component $Y_\alpha$ at each time of discontinuity.  We subdivide its
principal domain $[0,T]$ as
\begin{equation*}
  0 \defeq T_0 < T_1 < \cdots < T_M \defeq T,
\end{equation*}
requiring the lengths of these subintervals be sufficiently short:
\begin{equation*}
  t_m \defeq T_{m} - T_{m-1} < \frac{\ell}{10} \ \text{for each $m = 0,
    \ldots, M$.}
\end{equation*}
As $\sum_{k = 0}^m t_k = T_{m}$, for all times $t \in \bbR$ we have
\begin{equation*}
  \calU(t) = \calU(t - T_{M-1}) \, \calU(t_{M-1}) \, \calU(t_{M-2}) \cdots
  \calU(t_1) .  
\end{equation*}

To interweave our operators $B_\bullet$ and the above subdivision of the
geodesic $\gamma$, let $\bmw = (\bmw_0,\ldots,\bmw_{M-1})$ be a word in the
indices $\alpha$ and $j$ for the cone and interior localizers respectively,
i.e., either $\bmw_m = \alpha \in \{1,\ldots,N\}$ or
$\bmw_m = j \in \{1,\ldots,J\}$ for each $m = 0,\ldots,M-1$.  We write $\bm{W}$
for the collection of all such words.  For each time $t \in \bbR$, the operator
\begin{equation*}
  \calU(t) - \sum_{\bmw \in \bm{W}} \calU(t - T_{M-1}) \,
  B_{\bmw_{M-1}} \, \calU(t_{M-1}) \, B_{\bmw_{M-2}} \, \cdots B_{\bmw_1} \,
  \calU(t_1)\, B_{\bmw_{0}} 
\end{equation*}
then maps $\calD_{-\infty}$ to $\calD_\infty$ since the operators $B_\bullet$
make up the microlocal partition of unity above.  Hence, taking the trace of
this operator yields a smooth function on $\bbR$, implying that the
singularities of $\Tr \calU(t)$ are the same as the sum over $\bmw \in \bm{W}$
of the singularities of the microlocalized terms
\begin{equation*}
  \Tr \! \left[ \sqrt{B_{\bmw_{0}}} \, \calU(t - T_{M-1}) \, B_{\bmw_{M-1}} \,
    \calU(t_{M-1}) \, B_{\bmw_{M-2}} \cdots B_{\bmw_1} \, \calU(t_1) \,
     \sqrt{B_{\bmw_{0}}}  \right] ;
\end{equation*}
here we have used cyclicity of the trace to move $ \sqrt{B_{\bmw_{0}}}
$ to the left.
Let us now suppose $\gamma$ is the only such purely diffractive closed geodesic
of period $T$.\footnote{If there are multiple such geodesics, then the same
  argument will show that each contributes its own term of the same form to the
  wave trace.} Given a word $\bmw \in \bm{W}$, for $t$ close to $T$ we have
\begin{equation*}
\sqrt{  B_{\bmw_{M}} }\, \calU(t - T_{M-1}) \, B_{\bmw_{M-1}} \, \calU(t_{M-1}) \,
  B_{\bmw_{M-2}} \cdots B_{\bmw_1} \, \calU(t_1) \, \sqrt{  B_{\bmw_{M}} }  : \calD_{-\infty} \To
  \calD_\infty 
\end{equation*}
unless there is a parametrization of $\gamma$ satisfying
\begin{equation}\label{shadowproperty}
  \gamma(T_m) \in \WF'(B_{\bmw_m}) \ \text{for all $m=0,\ldots,M$},
\end{equation}
where we set $B_{\bmw_{M}}=B_{\bmw_{0}}.$
Therefore, we need only consider terms satisfying \eqref{shadowproperty} in calculating the trace.

For later use, we further refine our microlocal partition of unity to have the
following two convenient properties, which we call Partition
Properties~\ref{assumption:partition1} and~\ref{assumption:partition2}.

\begin{property}
  \label{assumption:partition1}
  Suppose $\psi_\alpha$ and $A_j$ are a cone localizer and an interior localizer
  respectively in the microlocal partition of unity, and let
  $p \in \WF'_\upb(\psi_\alpha)$ and $q \in \WF'(A_j)$ be elements of their
  respective microsupports.\footnote{Here,
    $\WF'_\upb(\psi_\alpha) = \Sbstar_{Y_\alpha} X \cup \WF'\!\left( \psi_\alpha
      \big\vert_{X^\circ} \right)$;
    the use of the b-microsupport is only to be precise.}  If there is a time
  $t \in \left[0, \frac{1}{10} \, \ell \right]$ such that $p \dtilde q$ (or
  equivalently $q \dtilde p$), then either
  \begin{enumerate}[(i)]
  \item $p^\flat \in \left\{ \psi_\alpha \equiv 1 \right\}$, or
  \item $x(p^\flat) > \frac{1}{100} \, \delta_\mathrm{cone}$.
  \end{enumerate}
\end{property}

Thus, the permissible alternatives are as follows:  either the diffractive
(forward or backward) geodesic flowout from $\WF'(A_j)$ all lies close to the
cone point $Y_\alpha$ (i.e., within $\{\psi_\alpha \equiv 1\}$) or it stays
slightly away from the cone point (so $x$ is bounded below).  This may be
ensured by leaving the cone localizers $\psi_\alpha$ fixed and shrinking the
support of the Schwartz kernels of the interior localizers $A_j$'s---controlled
by the constant $\delta_\mathrm{int}$---as needed.

We also make the following further requirement on the partition of unity:  when
flowing out from the microsupport of an interior localizer to that of a conic
localizer and thence to the microsupport of another interior localizer, only one
of these flowouts may involve interaction with a cone point.  To describe this,
we use the notation
\begin{equation}
  \label{eq:regtilde}
  p \regtilde q \Longleftrightarrow \mbox{} {
    \begin{aligned}
      &\text{$p$ and $q$ are connected via a limit\footnotemark\ of the ordinary
        geodesic} \\
      &\text{flow for time $t$ in the cosphere bundle \emph{of the interior}
        $S^* X^\circ$}.
    \end{aligned}
  }
\end{equation}
\footnotetext{In particular, if $p \regtilde q$, then either $p$ or $q$ may be a
  point of the boundary.}

\begin{property}
  \label{assumption:partition2}
  Let $A_{j_1}$ and $A_{j_2}$ be interior localizers and $\psi_\alpha$ a cone
  localizer in our microlocal partition of unity, and let
  $p_1 \in \WF'(A_{j_1})$, $p_2 \in \WF'(A_{j_2})$, and
  $q \in \WF'_\upb(\psi_\alpha)$ be points in their respective microsupports.
  Suppose that there are $t$ and $t'$ in
  $\left[ 0, \frac{1}{10} \, \ell \right]$ such that
  $p_1 \dtilde q \dtilde[t'] p_2$.  Then either
  \begin{equation*}
    p_1 \regtilde q \quad \text{or} \quad q \regtilde[t'] p_2 
  \end{equation*}
(with the same alternative holding for every choice of $q$).
\end{property}
Once again this property can be ensured by shrinking the microsupports of the
$A_j$'s and leaving the $\psi_\alpha$'s fixed; we simply rely on the fact that a
diffractive geodesic cannot undergo two diffractions in time less than $\ell$.



\section{The wave trace along diffractive orbits}
\label{sec:diff-wave-trace}

We now prove the \hyperref[theorem:main]{Main Theorem}.  Recall that $\gamma$ is
a strictly diffractive closed geodesic in $\Sbstar X$ undergoing $k$
diffractions through cone points $Y_{\alpha_1},\ldots,Y_{\alpha_k}$, and these
cone points are pairwise nonconjugate to one another.  We denote the length of
$\gamma$ by $L$, and we decompose it as a concatenation of geodesic segments
$\gamma_1, \ldots, \gamma_k$.

\begin{proof}[Proof of the Main Theorem]
  Using the microlocal partition of unity adapted to $\gamma$ developed in
  Section~\ref{section:partition}, we may write $\Tr \calU(t)$ as the sum
  \begin{multline}
    \label{eq:propagator-decomp}
    \sum_{\bmw \in \bm{W}} \Tr \! \left[ \sqrt{B_{\bmw_{0}}} \, \calU(t - T_{M-1}) \,
      B_{\bmw_{M-1}} \, \calU(t_{M-1}) \, B_{\bmw_{M-2}} \cdots B_{\bmw_1} \,
      \calU(t_1) \, \sqrt{B_{\bmw_0}} \right]
  \end{multline}
  modulo a smooth error.  As discussed in the previous section, the terms in the
  sum which are microlocally nontrivial are those for which there is a partition
  of the domain of $\gamma$ such that $\gamma(T_m) \in \WF'(B_{\bmw_m})$ for all
  $m = 0, \ldots, M$.  If each of the microlocalizers $B_{\bmw_m}$ is an
  interior localizer, then we may compute the trace of the resulting
  term using
  the information from Theorem~\ref{thm:amplitude-mult-diff}.  Therefore, we
  need to massage the remaining terms in \eqref{eq:propagator-decomp} into such
  a form, eliminating the cone localizers $\psi_\alpha$ from the expression.
  (Such a technique was previously employed in \cite{Wunsch:Poisson} and in
  \cite{Hillairet:Contribution}.)

  To start, we use cyclicity of the trace to ensure that $B_{\bmw_0}$
  is not a cone localizer.  Since
  $\delta_\mathrm{cone} < \frac{1}{10} \, \ell$ and each
  $t_m < \frac{1}{10} \, \ell$, any summand in \eqref{eq:propagator-decomp} has
  at least two interior localizers appearing between any pair of cone
  localizers.  Thus, a cyclic shift always suffices to ensure
  the outermost terms are $\sqrt{A_j}$'s rather than $\sqrt{\psi_\alpha}$'s, and by
  reparametrizing $\gamma$ we may assume the propagators are evaluated as in
  \eqref{eq:propagator-decomp}.

  We now deal with the internal copies of the cone localizers $\psi_\alpha$,
  which appear as factors
  \begin{equation*}
    A_{j_2} \, \calU(t_{m+1}) \, \psi_\alpha \, \calU(t_m) \, A_{j_1}
  \end{equation*}
  corresponding to an internal diffraction.  (There are also terms
  where one of the $A_j$ is replaced by $\sqrt{A_j}$ and they are
  treated in exactly the same way.)  To simplify these terms and
  eliminate the cone localizers $\psi_\alpha$, we appeal to the properties built
  into our microlocal partition of unity.  If Partition
  Property~\ref{assumption:partition1}(i) holds for
  $(q,t) \in \WF'(A_{j_2}) \times \{t_{m+1}\}$ or
  $(q,t) \in \WF'(A_{j_1}) \times \{t_{m}\}$, then we may replace $\psi_\alpha$
  by the identity operator in the factor:
  \begin{equation*}
    A_{j_2} \, \calU(t_{m+1}) \, \psi_\alpha \, \calU(t_m) \, A_{j_1} \equiv
    A_{j_2} \, \calU(t_{m+1}) \, \Id \, \calU(t_m) \, A_{j_1} \mod{\calC^\infty} .
  \end{equation*}
  On the other hand, if both of these compositions fall under Partition
  Property~\ref{assumption:partition1}(ii), then the points in the support of
  $\psi_\alpha$ which are diffractively related to the projections of
  $\WF'(A_{j_1})$ and $\WF'(A_{j_2})$ to the base are at distance greater than
  $\frac{1}{100} \, \delta_{\mathrm{cone}}$ from the boundary. Then by Partition
  Property~\ref{assumption:partition2}, at most \emph{one} of these compositions
  can involve a diffractive interaction with the boundary.  Suppose the
  $\calU(t_m)$ factor only propagates within the interior.  Then there
  exists $Q \in \Psi_c^0(X^\circ)$ where
  $Q$ is microlocally the identity on the time-$t_m$ geodesic flowout
  of $\WF' A_{j_1}.$  Thus
$$
 A_{j_2} \, \calU(t_{m+1}) \, \psi_\alpha \, \calU(t_m) \, A_{j_1}
\equiv  A_{j_2} \, \calU(t_{m+1}) \, \psi_\alpha \, Q\, \calU(t_m) \, A_{j_1}\mod{\calC^\infty}.
$$
 We write
  \begin{equation*}
    A_{j_2} \, \calU(t_{m+1}) \, \psi_\alpha \, Q\, \calU(t_m) \, A_{j_1}= A_{j_2}
    \, \calU(t_{m+1} + t_m) \left[ \calU(-t_m) \, \psi_\alpha \, Q \, \calU(t_m)
    \right] A_{j_1}   
  \end{equation*}
  and by applying the Egorov Theorem over $X^\circ$, we conclude
  $\left[ \calU(-t_m) \, \psi_\alpha\, Q \, \calU(t_m) \right] A_{j_1}$ is a
  pseudodifferential operator $\widetilde{A}_{j_1}$ in $\Psi^0_\upc(X^\circ)$.
  Moreover, its principal symbol is
  $\sigma\big(\widetilde{A}_{j_1}\big) = \sigma(A_{j_1}) \cdot (G^{t_m})^* \! \left[
    \sigma(\psi_\alpha) \right]$.
  Hence, the microlocalized propagator collapses into an expression of the same
  form:
  \begin{equation*}
    A_{j_2} \, \calU(t_{m+1}) \, \psi_\alpha \, \calU(t_m) \, A_{j_1} = A_{j_2}
    \, \calU(t_{m+1}) \, \Id \, \calU(t_m) \, \widetilde{A}_{j_2} . 
  \end{equation*}
  In particular, the product of the principal symbols of these expressions
  remains unchanged.  An analogous argument holds if $\calU(t_{m+1})$ is the
  factor propagating within the interior.

  Proceeding in this fashion, we may replace \eqref{eq:propagator-decomp} with
  an analogous sum
  \begin{equation}
    \label{eq:propagator-decomp-new}
    \sum_{\bmw \in \bm{W}} \Tr \! \left[ \sqrt{\widetilde{B}_{\bmw_{0}}} \, \calU(t -
      T_{M-1}) \, \widetilde{B}_{\bmw_{M-1}} \, \calU(t_{M-1}) \,
      \widetilde{B}_{\bmw_{M-2}} 
      \cdots \widetilde{B}_{\bmw_1} \, \calU(t_1) \,
     \sqrt{ \widetilde{B}_{\bmw_0}} \right] 
  \end{equation}
  in which no cone localizers appear.  Thus, each of the pseudodifferential
  operators $\widetilde{B}_{\bmw_m}$'s are either interior localizers $A_j$,
  modified interior localizers $\widetilde{A}_j$'s having essentially the same
  properties, or copies of the identity operator $\Id$.  In particular, the
  principal symbols of the pseudodifferential operators in each term still sum
  to $1$ when evaluated along the geodesic, i.e.,
  \begin{equation*}
    \sum_{\bmw \in \bm{W}} \prod_{m = 0}^{M-1} \sigma\!\left(
      \widetilde{B}_{\bmw_m} \right) \! \left( \gamma(T_m) \right) = 1 .
  \end{equation*}
  As each term in \eqref{eq:propagator-decomp-new} falls under the description
  of Theorem~\ref{thm:amplitude-mult-diff}, all that remains is to compute the
  trace of each term.

  We now perform the method of stationary phase in the base variables of the
  expression for the propagator in Theorem~\ref{thm:amplitude-mult-diff},
  restricted to the diagonal and integrated over the base manifold $X$---we do
  not apply stationary phase in the phase variable $\xi$.  In order to compute
  the trace of each term, we once again use cyclicity to permute so that the
  outermost factors of $\widetilde{B}_{w_m}$ are not identity terms but rather
  are interior microlocalizers $\sqrt{A_j}$ or $\sqrt{\widetilde{A}_j}$.  (Note that this
  requires switching the roles of $t-T_M$ and of one of the $t_j$'s, which
  amounts to a simple relabeling.)  The resulting phase function is then
  \begin{equation*}
    \phi \defeq \left[ \dist_g^{\gamma_k}(z,Y_{\alpha_{k-1}}) + \sum_{j =
        2}^{k-1} \dist_g^{\gamma_{j-1}}(Y_{\alpha_j},Y_{\alpha_{j-1}}) +
      \dist_g^{\gamma_0}(Y_{\alpha_1},z) - t \right] \xi ,
  \end{equation*}
  where the variable $z$ is supported in a compact subset of the interior of $X$
  owing to the support of the amplitude.  The phase $\phi$ is critical in the
  $z$-variable precisely when
  \begin{equation*}
    \del_z \! \left[ \dist_g^{\gamma_k}(z,Y_{\alpha_{k-1}}) +
      \dist_g^{\gamma_0}(Y_{\alpha_1},z) \right] = 0 ,
  \end{equation*}
  and this forces $z$ to lie along the unique geodesic $\gamma_k^\flat$
  connecting the cone points $Y_{\alpha_1}$ and $Y_{\alpha_k}$, as before.
  (Note that $\gamma_0 = \gamma_k$ in this calculation.)  Unlike the previous
  calculations, though, there is no integration in the phase variables, and as a
  result there is no particular point along the segment which is fixed by
  stationarity.  Thus, the entire segment of $\gamma_k^\flat$ within the support
  of the amplitude is part of the critical set, i.e., it is a Morse-Bott
  stationary manifold for the phase.  Therefore, we applying this version of the
  method of stationary phase in Fermi normal coordinates $(\nu,\ell)$ along
  $\gamma_k^\flat$, and we compute that
  \begin{equation*}
    \Tr \! \left[ \sqrt{\widetilde{B}_{\bmw_{M}}} \, \calU(t - T_{M-1}) \,
      \widetilde{B}_{\bmw_{M-1}} \, \calU(t_{M-1}) \, \widetilde{B}_{\bmw_{M-2}}
      \cdots \widetilde{B}_{\bmw_1} \, \calU(t_1) \, \sqrt{\widetilde{B}_{\bmw_0}}
    \right]
  \end{equation*}
  has the oscillatory integral representation
  \begin{equation}
    \label{eq:monomial-trace-oscil-int-rep-0}
    \int_{\bbR_\xi} e^{ - i (t - L) \xi} \, e^{\frac{i\pi(n-1)}{4}} \,
    (2\pi)^{\frac{n+1}{2}} \, i^{-m_{\gamma_k}} \left\{ \int_{\ell =
        0}^{\dist_g^{\gamma_k}(Y_{\alpha_1},Y_{\alpha_k})}
      b(t;\nu_*,\ell;\nu_*,\ell;\xi) \right\} d\xi ,
  \end{equation}
  where $b$ is the amplitude \eqref{eq:amplitude-mult-diff} from
  Theorem~\ref{thm:amplitude-mult-diff} and $\nu_*$ is the critical point in the
  $\nu$-variables.  In particular, the signature and Hessian factors coming from
  criticality in the $\nu$-variables are the same as before.  We may thus write
  \eqref{eq:monomial-trace-oscil-int-rep-0} as
  \begin{equation}
    \label{eq:monomial-trace-oscil-int-rep}
    \int_{\bbR_\xi} e^{- i (t - L) \xi} \, a(t,\xi) \, d\xi ,
  \end{equation}
  where the amplitude $a \in S^{- \frac{k(n-1)}{2}}_\upc(\bbR_t)$ is
  \begin{multline}
    \label{eq:monomial-trace-amplitude}
    (2\pi)^{\frac{kn}{2}} \, e^{\frac{i k \pi (n-3)}{4}} \cdot \chi(\xi) \,
    \xi^{-\frac{k(n-1)}{2}} \cdot \left\{ \int_{\ell =
        0}^{\dist_g^{\gamma_k}(Y_{\alpha_1},Y_{\alpha_k})}
      \bm{a}(\nu_*,\ell;\nu_*,\ell;\xi) \right\} \\
    \mbox{} \times \left[ \prod_{j = 1}^k i^{-m_{\gamma_j}} \cdot
      \bmD_{\alpha_j}(q_j,q_j') \cdot
      \dist_g^{\gamma_{j}}(Y_{\alpha_{j+1}},Y_{\alpha_{j}})^{-\frac{n-1}{2}}
      \cdot \Theta^{-\frac{1}{2}}(Y_{\alpha_{j}} \to Y_{\alpha_{j+1}}) \right]
  \end{multline}
  modulo elements of $S^{-\frac{k(n-1)}{2} - \frac{1}{2} + 0}$ and we treat the
  labels for $\gamma_j$ and $Y_{\alpha_j}$ as being cyclic in $\{1,\ldots,k\}$.
  Adding up these contributions yields the asserted expression for the trace
  once we note that the combined amplitudes $\bm{a}$ sum to the amplitude of the
  identity operator.  (Note that in this fashion we integrate over the geodesic just
  once, if it happens to be an iterate of a shorter one, thus
  accounting for the factor of $L_0$ rather than $L.$)  This concludes the proof.
  %
\end{proof}

\appendix



\section{Lagrangian distributions and their amplitudes}
\label{sec:lagrangian-dists-amplitudes}

In this appendix, we briefly collect the various facts we need in the body of
the work about Lagrangian distributions and their differing kinds of amplitudes.
The bulk of these facts are due to H\"ormander---see, e.g. \cite{Hormander:FIO1}, but we approach Lagrangian
distributions through the iterated regularity perspective of Melrose
\cite{Melrose:Transformation} (see \cite{HorIV} for a unified
exposition). 

Let $Z$ be a smooth $n$-dimensional manifold, and suppose
$\Lambda \subseteq T^*Z \setminus \vec{0}$ is a fiber-homogeneous Lagrangian
submanifold of its cotangent bundle with the zero section removed.  Writing
$\mathfrak{X}^s_\loc$ for either the local Sobolev space $H^s_\loc(X)$ or the
local Besov space $B^s_{2,\infty,\loc}(X)$, we say $u$ is a Lagrangian
distribution with respect to $\Lambda$ of class $I \mathfrak{X}^s(Z,\Lambda)$ if
and only if
\begin{equation*}
  A_N \cdots A_1 u \in \mathfrak{X}^s_\loc \ \text{for all finite
    compositions of} \ A_j
  \in \Psi^1(Z) \ \text{with} \ \sigma_1(A_j) \big\vert_\Lambda \equiv 0 .
\end{equation*}
In particular, the property of a distribution $u \in \calC^{-\infty}(Z)$ being
Lagrangian with respect to $\Lambda$ and $\mathfrak{X}^s$ is entirely local.

When $u$ is an element of $\IB_{2,\infty}^{-m - \frac{n}{4}}(Z,\Lambda)$, we
also write $u \in I^m(Z,\Lambda)$; these are H\"ormander's classes of Lagrangian
distributions.  As shown in \cite{HorIV}, one of
their characterizing properties is the existence of local oscillatory integral
representations:  given $u \in I^m(Z,\Lambda)$ there is a covering of $Z$ by open
sets $U$ on which
\begin{equation}
  \label{eq:Lagn-oscillatory-integral-presentation}
  u(x) = \int_{\bbR^N_\theta} e^{i\phi(z,\theta)} \, a(z,\theta) \, d\theta .
\end{equation}
Here, $\phi$ is a phase function locally parametrizing $\Lambda$ over
$U \subseteq Z$, and the amplitude $a$ is a Kohn-Nirenberg symbol in the class
$S^{m-\frac{n+2N}{4}}(U \times \bbR^N_\theta)$, i.e., a smooth function on
$U \times \bbR^N_\theta$ satisfying the estimates
\begin{equation*}
  \left\| \left< \theta \right>^{-m + |\beta|} D_z^\alpha D_\theta^\beta
    a(z,\theta) \right\|_{L^\infty(U \times \bbR^N)} \leqslant C_{\alpha,\beta}
\end{equation*}
for all multi-indices $\alpha$ and $\beta$, where
$\left< \theta \right> \defeq \left( 1 + |\theta|^2 \right)^\frac{1}{2}$.

When $u$ is an element of a Sobolev-based iterated regularity space
$\IH^s(Z,\Lambda)$, it also has an oscillatory integral representation like
\eqref{eq:Lagn-oscillatory-integral-presentation}, but in this case the
amplitude $a$ is an \emph{$L^2$-based symbol} of class
$S^{-s} L^2 \! \left( U \times \bbR^n_\theta \right)$.  In the general case, we
define this class of symbols as follows.

\begin{definition}
  \label{def:L2-symbols}
  The space $S^m L^2 \! \left(\bbR^n_z \times \bbR^N_\theta \right)$ of
  \emph{$L^2$-based symbols of order $m$} on $\bbR^n_z \times \bbR^N_\theta$ are
  those smooth functions $a$ satisfying the estimates
  \begin{equation*}
    \label{eq:L2-symbols}
    \left\| \left< \theta \right>^{-m + |\beta|} D_z^\alpha D_\theta^\beta
      a(z,\theta) \right\|_{L^2(\bbR^n \times \bbR^N)} \leqslant C_{\alpha,\beta}
  \end{equation*}
  for all multi-indices $\alpha$ and $\beta$.
\end{definition}

The Sobolev embedding theorem shows that these $L^2$-based symbols give an
alternative filtration of the Kohn-Nirenberg symbol spaces:
\begin{equation}
  \label{KNL2}
  S^m \! \left(\bbR^n_z \times \bbR^N_\theta \right) \subsetneq S^{m +
    \frac{N}{2} + 0} L^2 \! \left(\bbR^n_z \times \bbR^N_\theta
  \right) \subsetneq S^{m + 0} \! \left( \bbR^n_z \times \bbR^N_\theta \right)
  .
\end{equation}
At the level of iterated regularity, this chain of inclusions just reflects the
similar inclusions of regularity spaces
$B^s_{2,\infty}(\bbR^n) \supsetneq H^{s+0}(\bbR^n) \supsetneq
B^{s+0}_{2,\infty}(\bbR^n)$, in turn yielding the chain of inclusions
\begin{equation*}
  \IB^s_{2,\infty}(\bbR^n,\Lambda) \supsetneq \IH^{s+0}(\bbR^n,\Lambda)
  \supsetneq \IB^{s+0}_{2,\infty}(\bbR^n,\Lambda) 
\end{equation*}
Furthermore, we may use \eqref{KNL2} to conclude the following rule for
multiplication of $L^2$-based symbols:
\begin{equation}
  \label{eq:L2-symbol-multiplication}
  \text{$a \in S^m L^2 \! \left( \bbR^n_z \times \bbR^N_\theta \right)$ and $b
    \in S^{m'} L^2 \! \left( \bbR^n_z \times \bbR^N_\theta \right)$ 
    $\Longrightarrow$ $ab \in S^{m + m' - \frac{N}{2} + 0} \! \left( \bbR^n_z \times
      \bbR^N_\theta \right)$} .
\end{equation}

We now state and prove our main technical lemma relating the two types of
amplitudes, used in the proof of Lemma \ref{lemma:reduction} above.

\begin{lemma}
  \label{lemma:averagedsymbol}
  Let
  $a \in S^m_\upc L^2 \! \left( \bbR^{p+q}_{(w,z)} \times \bbR^N_\theta \right)$
  be an $L^2$-symbol which is compactly supported in the base variables $(w,z)$.
  Suppose that there exist $\ell \in \bbR$ and $\delta>0$ such that for all compactly supported
  $b \in S^\ell_\upc L^2 \! \left( \bbR^q_z \times \bbR^N_\theta \right),$
  \begin{equation*}
    \int_{\bbR^q_z} a(w,z,\theta) \, b(z,\theta) \, dz  \in S^{m + \ell - \frac{N}{2} - \delta} L^2 \!  \left( \bbR^p_w \times
    \bbR^N_\theta \right)
  \end{equation*}
 Then
  \begin{equation*}
a\in    S^{m - \delta + 0}_\upc L^2 \! \left(\bbR^{p+q}_{(w,z)} \times \bbR^N_\theta
    \right) \subsetneq S^{m-\frac{N}{2}-\delta+0}_\upc \! \left(
      \bbR^{p+q}_{(w,z)} \times \bbR^N_\theta \right).
  \end{equation*}
\end{lemma}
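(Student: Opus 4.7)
The plan is to dyadically decompose in the phase variable $\theta$, test the hypothesis against frequency-localized symbols, and recover the improved $L^2$-symbol estimate on $a$ via Plancherel in the compactly supported $z$-direction. It suffices to establish the zeroth-order estimate
$$\int |a(w,z,\theta)|^2\langle\theta\rangle^{-2(m-\delta+\epsilon)}\,dw\,dz\,d\theta<\infty$$
for every $\epsilon>0$; the corresponding estimates on $D_{w,z}^\alpha D_\theta^\beta a$ follow from the same argument applied to the differentiated symbol, using integration by parts in $z$ to transfer $D_z$-derivatives onto the test function $b$ within the symbol calculus.

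Fix a Littlewood--Paley partition $1=\sum_k\varphi_k(\theta)$ with $\mathrm{supp}\,\varphi_k\subseteq\{|\theta|\sim 2^k\}$ and a cutoff $\rho\in C_c^\infty(\mathbb{R}^q_z)$ identically $1$ on the $z$-projection of $\mathrm{supp}\,a$; we may assume this support lies in a fixed unit box. For each $j\in\mathbb{Z}^q$ set
$$b_{k,j}(z,\theta)=\rho(z)\,e^{ij\cdot z}\langle\theta\rangle^\ell\varphi_k(\theta),$$
which lies in $S^\ell_cL^2$ with seminorm growth at worst $\langle j\rangle^M\cdot 2^{kN/2}$ in the finitely many seminorms to which continuity of the hypothesis map (via the closed graph theorem) reduces. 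Testing against $b_{k,j}$ yields the shellwise bound
$$\int_{|\theta|\sim 2^k}|\hat{a}(w,-j,\theta)|^2\,dw\,d\theta\lesssim \langle j\rangle^{2M}\cdot 2^{k(2m-2\delta)},$$
where $\hat{a}(w,\zeta,\theta)=\int a(w,z,\theta)\rho(z)e^{-iz\cdot\zeta}\,dz$.

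Plancherel in $z$ (via Fourier series on the unit box) identifies
$$\int_{|\theta|\sim 2^k}\|a(\cdot,\cdot,\theta)\|_{L^2_{w,z}}^2\,d\theta\sim \sum_{j\in\mathbb{Z}^q}\int_{|\theta|\sim 2^k}|\hat{a}(w,-j,\theta)|^2\,dw\,d\theta.$$
To make the sum over $j$ convergent, I interpolate the above pointwise-in-$j$ bound against the a priori rapid decay
$$\sum_j\langle j\rangle^{2s}\int|\hat{a}(w,j,\theta)|^2\langle\theta\rangle^{-2m}\,dw\,d\theta<\infty\quad(\text{for every }s>0),$$
which follows from the $z$-smoothness inherent in $a\in S^m_cL^2$. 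Splitting the sum at radius $R\sim 2^{k\delta/(q+2M+2s)}$ and balancing parameters gives, for any $\epsilon>0$,
$$\int_{|\theta|\sim 2^k}\|a(\cdot,\cdot,\theta)\|_{L^2_{w,z}}^2\,d\theta\lesssim 2^{k(2m-2\delta+\epsilon)}.$$
Summing geometrically in $k$ against the target weight $\langle\theta\rangle^{-2(m-\delta+\epsilon')}$ with $\epsilon'>\epsilon/2$ yields $a\in S^{m-\delta+0}_cL^2$, and the strict Kohn--Nirenberg inclusion follows from \eqref{KNL2}.

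The main obstacle is the interpolation step: the pointwise-in-$j$ bound from the hypothesis degrades as $\langle j\rangle^{2M}$, and one must harness the full $z$-smoothness encoded in $a\in S^m_cL^2$ (not merely the $L^2$-bound) to counteract this growth and close the dyadic estimate with only an arbitrarily small loss. This is also the reason the conclusion cannot be sharpened beyond the $+0$: the trade-off only saturates in the limit $s\to\infty$. Once the balancing is handled at each dyadic scale, the summation in $k$ is a routine geometric series.
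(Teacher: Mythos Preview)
Your argument is correct and rests on the same two pillars as the paper's proof: the closed graph theorem (to bound the output in terms of finitely many seminorms of the test symbol) and interpolation against the a priori $z$-smoothness of $a$ (which is where the $+0$ loss enters). The execution, however, differs. You make everything explicit: dyadic localization in $\theta$, Fourier series in the compactly supported $z$-variable, and an explicit splitting/balancing in the frequency parameter $j$ at each dyadic scale. The paper bypasses the dyadic decomposition entirely. It tests only against product-type symbols $b(z,\theta)=\varphi(z)\,\langle\theta\rangle^{-N/2+\delta-\epsilon}$, observes that $\varphi\mapsto \langle\theta\rangle^{-(m-\delta)-\epsilon}\int a\,\varphi\,dz$ is continuous $H^\infty_\upc(\bbR^q_z)\to L^2(\bbR^p_w\times\bbR^N_\theta)$, and invokes the closed graph theorem to factor through a finite Sobolev norm; by duality this places $a$ in $\langle\theta\rangle^{m-\delta+\epsilon}L^2(\bbR^p_w\times\bbR^N_\theta;H^{-A}(\bbR^q_z))$ directly, and a single interpolation with the a priori membership $a\in\langle\theta\rangle^{m}L^2(\bbR^p_w\times\bbR^N_\theta;H^\infty(\bbR^q_z))$ finishes. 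Your approach is more hands-on and makes the mechanism transparent; the paper's is shorter and avoids the dyadic bookkeeping by packaging the $z$-frequency analysis into Hilbert-space duality.
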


\begin{proof}
  For simplicity we fix $\ell=0$; the general proof is similar.

  We begin by showing the undifferentiated estimate
  $a \in \smallang{\theta}^{m-\delta+0} L^2(\bbR^{n+N})$.  Let $\varphi$ be an
  element of $\calC^\infty_\upc(\bbR^q_z)$, and choose
  $b(z,\theta) = \varphi(z) \left< \theta \right>^{-\frac{N}{2} + \delta -
    \varepsilon}$.
  Noting that $\left< \theta \right>^{-(m - \delta) - \varepsilon}$ is an
  element of $S^{m-\delta-\frac{N}{2}} L^2(\bbR^N_\theta)$ for any
  $\varepsilon > 0$, we observe that the map
  \begin{equation*}
    \varphi \longmapsto c(w,\theta) \defeq \left< \theta \right>^{-(m-\delta) -
      \varepsilon} \int_{\bbR^q_z} a(w,z,\theta) \, \varphi(z) \, dz
  \end{equation*}
  is continuous from $H^\infty_\upc(\bbR^q_z)$ to
  $S^0 L^2 \! \left( \bbR^p_w \times \bbR^N_\theta \right)$.  Thus, by the
  Closed Graph Theorem there is an $A \in \bbZ_+$ such that
  \begin{equation}
    \label{eq:bound1}
    \left\| c \right\|_{L^2(\bbR^p_w \times \bbR^N_\theta)} \lesssim \left(
      \sum_{|\alpha| \leqslant A} \left\| D^\alpha_z \varphi
      \right\|^2_{L^2(\bbR^q_z)} \right)^\frac{1}{2} .
  \end{equation}
  This implies that
  $\left< \theta \right>^{-(m-\delta)-\varepsilon} a(w,z,\theta)$ must be an
  element $L^2(\RR^p_w\times \RR^N_\theta)$ with values in the dual to the Hilbert space whose norm is defined by the
  right-hand side of \eqref{eq:bound1}, i.e.,
  \begin{equation}
    \label{eq:bound2}
    a \in \left< \theta \right>^{m - \delta + \varepsilon} L^2 \! \left(
      \bbR^p_w \times \bbR^N_\theta ; H^{-A}(\bbR^q_z) \right) .
  \end{equation}
  On the other hand, we know a priori that $a$ satisfies symbol estimates and
  may, in particular, be differentiated without loss:
  \begin{equation}
    \label{eq:bound3}
    a \in \left< \theta \right>^{m} L^2 \! \left( \bbR^p_w
      \times \bbR^N_\theta ; H^\infty(\bbR^q_z) \right) .
  \end{equation}
  Interpolating \eqref{eq:bound2} and \eqref{eq:bound3} yields that $a$ is an
  element of
  \begin{equation*}
    \left< \theta \right>^{m - \delta + 2 \varepsilon} L^2 \! \left(
      \bbR^p_w \times \bbR^N_\theta ; H^M(\bbR^q_z) \right)
  \end{equation*}
  for every $M \in \bbZ_+$ and every $\varepsilon > 0$, and this is precisely
  the first estimate required to show that $a$ is an element of
  $S^{m - \delta + 0} L^2 \! \left( \bbR^{p+q}_{(w,z)} \times \bbR^N_\theta
  \right)$.
  The higher-order symbol estimates are proved analogously, estimating the
  higher-order symbol norms in \eqref{eq:bound1} instead of just the $L^2$-norm.
  The lemma then follows once we use the inclusion from \eqref{KNL2}.
\end{proof}


\section*{References}
\label{sec:references}

\begin{biblist}

  \bib{Baskin-Wunsch:Resolvent}{article}{ author={Baskin, Dean}, author={Wunsch,
      Jared}, title={Resolvent estimates and local decay of waves on conic
      manifolds}, journal={J. Differential Geom.}, volume={95}, date={2013},
    number={2}, pages={183--214}, issn={0022-040X}, review={\MR{3128982}}, }

  \bib{Ber}{article}{ author={B{\'e}rard, Pierre H.}, title={On the wave
      equation on a compact Riemannian manifold without conjugate points},
    journal={Math. Z.}, volume={155}, date={1977}, number={3}, pages={249--276},
  }

  \bib{BFM}{article}{ author={Blair, Matthew D.}, author={Ford, G. Austin},
    author={Marzuola, Jeremy L.}, title={Strichartz estimates for the wave
      equation on flat cones}, journal={Int. Math. Res. Not. IMRN}, date={2013},
    number={3}, pages={562--591}, issn={1073-7928}, review={\MR{3021793}}, }

  \bib{BPST}{article}{ author={Burq, Nicolas}, author={Planchon, Fabrice},
    author={Stalker, John G.}, author={Tahvildar-Zadeh, A. Shadi},
    title={Strichartz estimates for the wave and Schr\"odinger equations with
      the inverse-square potential}, journal={J. Funct. Anal.}, volume={203},
    date={2003}, number={2}, pages={519--549}, issn={0022-1236},
    review={\MR{2003358 (2004m:35025)}}, doi={10.1016/S0022-1236(03)00238-6}, }

  \bib{Chazarain}{article}{ author={Chazarain, J.}, title={Formule de Poisson
      pour les vari\'et\'es riemanniennes}, language={French},
    journal={Invent. Math.}, volume={24}, date={1974}, pages={65--82},
    issn={0020-9910}, review={\MR{0343320 (49 \#8062)}}, }

  \bib{Cheeger-Taylor1}{article}{ author={Cheeger, Jeff}, author={Taylor,
      Michael}, title={On the diffraction of waves by conical singularities. I},
    journal={Comm. Pure Appl. Math.}, volume={35}, date={1982}, number={3},
    pages={275--331}, issn={0010-3640}, review={\MR{649347 (84h:35091a)}},
    doi={10.1002/cpa.3160350302}, }

  \bib{Cheeger-Taylor2}{article}{ author={Cheeger, Jeff}, author={Taylor,
      Michael}, title={On the diffraction of waves by conical
      singularities. II}, journal={Comm. Pure Appl. Math.}, volume={35},
    date={1982}, number={4}, pages={487--529}, issn={0010-3640},
    review={\MR{657825 (84h:35091b)}}, doi={10.1002/cpa.3160350403}, }

  \bib{Colin}{article}{ author={Colin de Verdi{\`e}re, Yves}, title={Spectre du
      laplacien et longueurs des g\'eod\'esiques p\'eriodiques. I, II},
    language={French}, journal={Compositio Math.}, volume={27}, date={1973},
    pages={83--106; ibid. 27 (1973), 159--184}, issn={0010-437X},
    review={\MR{0348798 (50 \#1293)}}, }

  \bib{CdV}{article}{ author={Colin de Verdi{\`e}re, Y.}, title={Param\'etrix de
      l'\'equation des ondes et int\'egrales sur l'espace des chemins},
    language={French}, conference={ title={S\'eminaire Goulaouic-Lions-Schwartz
        (1974--1975), \'Equations aux d\'eriv\'ees partielles lin\'eaires et non
        lin\'eaires, Exp. No.  20}, }, book={ publisher={Centre Math., \'Ecole
        Polytech., Paris}, }, date={1975}, pages={13}, review={\MR{0516962 (58
        \#24401)}}, }

  \bib{Dai-Wei}{article}{ author={Dai, Xianzhe}, author={Wei, Guofang}, title={A
      comparison-estimate of Toponogov type for Ricci curvature},
    journal={Math. Ann.}, volume={303}, date={1995}, number={2},
    pages={297--306}, issn={0025-5831}, review={\MR{1348801 (96h:53042)}},
    doi={10.1007/BF01460991}, }

  \bib{Duistermaat-Guillemin}{article}{ author={Duistermaat, J. J.},
    author={Guillemin, V. W.}, title={The spectrum of positive elliptic
      operators and periodic bicharacteristics}, journal={Invent. Math.},
    volume={29}, date={1975}, number={1}, pages={39--79}, issn={0020-9910},
    review={\MR{0405514 (53 \#9307)}}, }

  \bib{Hillairet:Contribution}{article}{ author={Hillairet, Luc},
    title={Contribution of periodic diffractive geodesics},
    journal={J. Funct. Anal.}, volume={226}, date={2005}, number={1},
    pages={48--89}, issn={0022-1236}, review={\MR{2158175 (2006d:58026)}},
    doi={10.1016/j.jfa.2005.04.013}, }

  \bib{Hormander:SpectralFunction}{article}{ author={H{\"o}rmander, Lars},
    title={The spectral function of an elliptic operator}, journal={Acta Math.},
    volume={121}, date={1968}, pages={193--218}, issn={0001-5962},
    review={\MR{0609014 (58 \#29418)}}, }

  \bib{Hormander:FIO1}{article}{ author={H{\"o}rmander, Lars}, title={Fourier
      integral operators. I}, journal={Acta Math.}, volume={127}, date={1971},
    number={1-2}, pages={79--183}, issn={0001-5962}, review={\MR{0388463 (52
        \#9299)}}, }

  \bib{HorIII}{book}{ author={H{\"o}rmander, Lars}, title={The analysis of
      linear partial differential operators. III}, series={Classics in
      Mathematics}, note={Pseudo-differential operators; Reprint of the 1994
      edition}, publisher={Springer, Berlin}, date={2007}, pages={viii+525},
    isbn={978-3-540-49937-4}, review={\MR{2304165 (2007k:35006)}}, }

\bib{HorIV}{book}{
   author={H{\"o}rmander, Lars},
   title={The analysis of linear partial differential operators. IV},
   series={Classics in Mathematics},
   note={Fourier integral operators;
   Reprint of the 1994 edition},
   publisher={Springer-Verlag, Berlin},
   date={2009},
   pages={viii+352},
   isbn={978-3-642-00117-8},
   review={\MR{2512677 (2010e:35003)}},
   doi={10.1007/978-3-642-00136-9},
}

  \bib{Jos}{book}{ author={Jost, J{\"u}rgen}, title={Riemannian geometry and
      geometric analysis}, series={Universitext}, edition={6},
    publisher={Springer, Heidelberg}, date={2011}, pages={xiv+611},
    isbn={978-3-642-21297-0}, review={\MR{2829653}},
    doi={10.1007/978-3-642-21298-7}, }

  \bib{KowVan}{article}{ author={Kowalski, O.}, author={Vanhecke, L.}, title={A
      new formula for the shape operator of a geodesic sphere and its
      applications}, journal={Math. Z.}, volume={192}, date={1986}, number={4},
    pages={613--625}, issn={0025-5874}, review={\MR{847010 (87j:53064)}},
    doi={10.1007/BF01162708}, }

  \bib{Loya}{article}{ author={Loya, Paul}, title={Complex powers of
      differential operators on manifolds with conical singularities},
    journal={J. Anal. Math.}, volume={89}, date={2003}, pages={31--56},
    issn={0021-7670}, review={\MR{1981913 (2004j:58030)}},
    doi={10.1007/BF02893076}, }

\bib{Melrose:Transformation}{article}{
   author={Melrose, Richard B.},
   title={Transformation of boundary problems},
   journal={Acta Math.},
   volume={147},
   date={1981},
   number={3-4},
   pages={149--236},
   issn={0001-5962},
   review={\MR{639039 (83f:58073)}},
   doi={10.1007/BF02392873},
}

\bib{Melrose-Conormal}{article}{
   author={Melrose, Richard B.},
   title={Calculus of conormal distributions on manifolds with corners},
   journal={Internat. Math. Res. Notices},
   date={1992},
   number={3},
   pages={51--61},
   issn={1073-7928},
   review={\MR{1154213 (93i:58148)}},
   doi={10.1155/S1073792892000060},
}

  \bib{Melrose:APS}{book}{ author={Melrose, Richard B.}, title={The
      Atiyah-Patodi-Singer index theorem}, series={Research Notes in
      Mathematics}, volume={4}, publisher={A K Peters Ltd.}, place={Wellesley,
      MA}, date={1993}, pages={xiv+377}, isbn={1-56881-002-4},
    review={\MR{1348401 (96g:58180)}}, }

\bib{Melrose-Vasy-Wunsch1}{article}{
   author={Melrose, Richard},
   author={Vasy, Andr{\'a}s},
   author={Wunsch, Jared},
   title={Propagation of singularities for the wave equation on edge
   manifolds},
   journal={Duke Math. J.},
   volume={144},
   date={2008},
   number={1},
   pages={109--193},
   issn={0012-7094},
   review={\MR{2429323 (2009f:58042)}},
   doi={10.1215/00127094-2008-033},
}
 \bib{Melrose-Wunsch1}{article}{ author={Melrose, Richard~B.}, author={Wunsch,
      Jared}, title={Propagation of singularities for the wave equation on conic
      manifolds}, journal={Invent. Math.}, volume={156}, date={2004},
    number={2}, pages={235--299}, issn={0020-9910}, review={\MR{2052609
        (2005e:58048)}}, doi={10.1007/s00222-003-0339-y}, }

  \bib{Mil}{book}{ author={Milnor, J.}, title={Morse theory}, series={Based on
      lecture notes by M. Spivak and R. Wells. Annals of Mathematics Studies,
      No. 51}, publisher={Princeton University Press, Princeton, N.J.},
    date={1963}, pages={vi+153}, review={\MR{0163331 (29 \#634)}}, }

  \bib{Wunsch:Poisson}{article}{ author={Wunsch, Jared}, title={A Poisson
      relation for conic manifolds}, journal={Math. Res. Lett.}, volume={9},
    date={2002}, number={5-6}, pages={813--828}, issn={1073-2780},
    review={\MR{1906080 (2004c:58062)}}, doi={10.4310/MRL.2002.v9.n6.a9}, }

\end{biblist}

\end{document}